\documentclass{article}

\pdfoutput=1

\usepackage{amsmath, amssymb, amsthm, enumerate, fullpage}
\usepackage{verbatim, enumitem, bbm, etoolbox}

\newcommand{\m}{\mathcal}
\renewcommand{\b}{\mathbb}

\apptocmd{\lim}{\limits}{}{}

\newcommand{\im}{\textrm{im}}

\newcommand{\tp}{\textrm{tp}}
\renewcommand{\o}{\overline}

\renewcommand{\th}{\textrm{Th}}

\newcommand{\LO}{\textrm{LO}}
\newcommand{\IT}{\textrm{IT}}

\newcommand{\proves}{\vdash}

\theoremstyle{plain}
\newtheorem{thm}{Theorem}
\newtheorem{theorem}[thm]{Theorem}

\newtheorem{lemma}[thm]{Lemma}

\newtheorem{corollary}[thm]{Corollary}
\newtheorem{prop}[thm]{Proposition}
\newtheorem{defthm}[thm]{Definition / Theorem}
\newtheorem{proposition}[thm]{Proposition}

\numberwithin{thm}{section}

\numberwithin{subcase}{case}

\theoremstyle{definition}
\newtheorem{definition}[thm]{Definition}

\def\Ind{\setbox0=\hbox{$x$}\kern\wd0\hbox to 0pt{\hss$\mid$\hss}
	\lower.9\ht0\hbox to 0pt{\hss$\smile$\hss}\kern\wd0}

\def\Notind{\setbox0=\hbox{$x$}\kern\wd0\hbox to 0pt{\mathchardef
		\nn=12854\hss$\nn$\kern1.4\wd0\hss}\hbox to 0pt{\hss$\mid$\hss}\lower.9\ht0
	\hbox to 0pt{\hss$\smile$\hss}\kern\wd0}

\newcommand{\Mod}{\textrm{Mod}}
\newcommand{\iso}{\cong}
\newcommand{\ZFC}{\textrm{ZFC}}
\newcommand{\TY}{\textrm{TY}}
\newcommand{\otp}{\textrm{otp}}

\newcommand{\boreleq}{\sim_{_{\!\!B}}}
\newcommand{\borelleq}{\leq_{_{\!B}}}
\newcommand{\borellt}{<_{_{\!B}}}

\newcommand{\class}{/\!\!\sim}

\begin{document}	
	\bibliographystyle{plain}
	
	\title{The Complexity of Isomorphism for Complete Theories of Linear Orders With Unary Predicates}

	\author{Richard Rast\thanks{The author is greatly indebted to the anonymous Reviewer \#1 for numerous improvements for readability and notation. The author was partially supported by NSF Research Grant DMS-1308546. }}
	
	\date{\today}

	\maketitle
	
	\begin{abstract}
		Suppose $A$ is a linear order, possibly with countably many unary predicates added.  We classify the isomorphism relation for countable models of $\th(A)$ up to Borel bi-reducibility, showing there are exactly five possibilities and characterizing exactly when each can occur in simple model-theoretic terms.  We show that if the language is finite (in particular, if there are no unary predicates), then the theory is $\aleph_0$-categorical or Borel complete; this generalizes a theorem due to Schirmann in \cite{SchirmannLinearOrders}.
	\end{abstract}

	\bigskip
	
	\section{Introduction}
	
	In 1973, Matatyahu Rubin published his master's thesis on the model theory of complete theories of linear orders, possibly with countably many unary predicates added.  Most prominently, he proved in \cite{RubinTheoriesOfLinearOrder} that such a theory has either finitely many or continuum-many countable models, up to isomorphism.  This was part of a larger set of results in his master's thesis, wherein he investigated a huge variety of model-theoretic properties of such theories, such as the size of type spaces, finite axiomatizability, and characterizing saturation of models.
	
	We continue his investigation here, examining what we will call colored linear orders.
	
	\begin{definition}
		Say $0\leq\kappa\leq\aleph_0$, and let $L_\kappa$ be the language $\{<\}\cup\{P_i:i\leq\kappa\}$ where $<$ is a binary relation and each $P_i$ is a unary relation.
		
		A \emph{colored linear order}, or CLO, is a complete $L_\kappa$-theory for some $\kappa$ making $<$ a linear order.
	\end{definition}
	
	We will also refer to a \emph{structure} $A$ as a CLO if its complete theory is a CLO in the above sense.  This should never cause confusion.
	
	We look into the complexity of such theories from two perspectives -- the Borel complexity of isomorphism for countable models of CLOs, and the number of models (of any cardinality) up to back-and-forth equivalence.  Surprisingly, there are essentially five classes of such theories.  First is the $\aleph_0$-categorical theories; then those with finitely many countable models; then those whose complexity corresponds approximately to ``real numbers;'' then those whose complexity corresponds approximately to ``sets of reals numbers;'' then those with unbounded complexity.  With the exception of ``finite,'' each of these classes contains exactly one element up to reducibility, and the Borel complexity lines up exactly with the corresponding count of back-and-forth inequivalent models.  This theorem is finally stated and proved precisely in Theorem~\ref{MainTheorem}.  It is worth noting that these five complexity classes are essentially identical to those appearing for o-minimal theories, as shown in \cite{RastSahotaBCOMinimal}, and for essentially the same reasons -- a divide on local simplicity or nonsimplicity, then a type-counting argument in the simple case.
	
	The outline of the paper is as follows.  We begin by highlighting what is the core of Rubin's work in \cite{RubinTheoriesOfLinearOrder}, since this paper relies heavily on his work there.  We then introduce other background the reader will need, such as notions of sum and shuffle, Rosenstein's characterization of $\aleph_0$-categorical linear orders, and the formal notions of Borel complexity and back-and-forth equivalence needed to make the preceding paragraph rigorous.
	
	In Section~\ref{SelfAdditiveSection} we re-introduce the notion of self-additive CLOs (approximately those which cannot be definably divided into convex pieces) and show they are either minimally or maximally complex.  In Section~\ref{IntervalTypesSection}, we show that CLOs can be definably decomposed into essentially self-additive pieces, and that if any of these are maximally complicated, so is the whole theory.  If not, we characterize back-and-forth equivalence for such theories as fairly simple, showing a strong dichotomy.  We then fine-tune this analysis to give the exact cases which a CLO can fall into, and prove our characterization.
	
	We end with a special case, showing that none of the ``middle cases'' can happen if the language is finite.  This generalizes a theorem of Schirmann in \cite{SchirmannLinearOrders}, where a similar result was shown for complete theories of linear orders.

	\section{Background}
	
	For this section we cover several classical topics which are essential to the study of linear orders, such as convex sums, shuffles, and Rosenstein's characterization of $\aleph_0$-categorical linear orders. But first and foremost, we want to highlight the following ``technical lemma'' of Rubin, which appears as Corollary~2.3 in \cite{RubinTheoriesOfLinearOrder}:
	
	\begin{lemma}\label{RelativizationLemma}
		Let $A$ be a CLO, and let $B\subset A$ be convex.  Let $\phi(\o x)$ be a formula, possibly with parameters from $A\setminus B$.  Then there is a formula $\phi^{\#}(\o x)$ with no parameters where, for all $\o b$ from $B$, $B\models\phi^{\#}(\o b)$ if and only if $A\models \phi(\o b)$.
	\end{lemma}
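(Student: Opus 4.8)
The plan is to induct on the complexity of the formula $\phi$. The atomic case and the Boolean connectives are immediate: if $\phi$ is atomic, it mentions only the relations $<$ and the $P_i$ together with variables and parameters, and since $B$ is a substructure the truth value of $\phi(\o b)$ for $\o b$ from $B$ is already absolute between $B$ and $A$ — here we may take $\phi^\#$ to be $\phi$ with any parameters from $A\setminus B$ replaced by the appropriate truth constants (a parameter $a\notin B$ compared via $<$ to a variable $x$ ranging over $B$ gives a fixed answer, since $B$ is convex and $a$ lies entirely to the left of or entirely to the right of $B$; this is where convexity first enters). Conjunction and negation commute with the passage to $\phi^\#$ in the obvious way. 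So the entire content is in the existential quantifier step.

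For the quantifier step, suppose $\phi(\o x)=\exists y\,\psi(y,\o x)$ and that $\psi$ already has a relativization $\psi^\#$. The difficulty is that an existential witness $y$ for $A\models\phi(\o b)$ need not lie in $B$; it may come from the part of $A$ to the left of $B$, from the part to the right, or from $B$ itself. So I would split $A$ into the three convex pieces $L$ (everything $<B$), $B$ itself, and $R$ (everything $>B$), and correspondingly write $A\models\phi(\o b)$ iff $B\models\exists y\,\psi^\#(y,\o b)$, or $A\models\exists y\,(y<\o b \wedge \text{``}y\notin B\text{''} \wedge \psi)$, or the symmetric statement for $R$. The middle disjunct is exactly $\exists y\,\psi^\#(y,\o x)$ and is already a parameter-free $B$-formula. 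For the two outer disjuncts, the point is that the induced structure on $L$ (resp. $R$) together with its interaction with $B$ across the cut is controlled by only finitely much data — essentially a $1$-type over the empty set realized in $L$, or rather the relevant quantifier-rank-$(\operatorname{qr}\psi)$ information about $L$ — and crucially, since $A$ is a CLO, $\th(A)$ decides which such pieces occur. Concretely, I would argue that the statement ``$A\models\exists y<\o b\,(y\notin B\wedge\psi(y,\o b))$'' is equivalent to a finite disjunction $\bigvee_j \chi_j(\o b)$ where each $\chi_j$ is a $B$-formula expressing that $\o b$ has a certain type over the left cut, and the fact that $\th(A)$ is complete is used to pre-compute, once and for all, which of these disjuncts to include — i.e.\ the finitely many possible ``left behaviors'' are settled by the theory, not by the model. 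The symmetric treatment handles $R$. Taking $\phi^\#$ to be the disjunction of $\exists y\,\psi^\#(y,\o x)$ with these finitely many $\chi_j$ and their right-hand analogues completes the induction.

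The main obstacle is precisely the claim in the previous paragraph that the ``boundary behavior'' of the outer pieces $L$ and $R$, relative to tuples in $B$, is captured by finitely many parameter-free $B$-formulas whose inclusion is decided by $\th(A)$. Making this rigorous is where one genuinely uses that we are in a CLO and that the pieces are convex: convexity guarantees that the only way a tuple $\o b$ from $B$ interacts with a point $y\in L$ is through the single piece of data ``$y<\o b$'' (all coordinates at once), so the relevant information about $L$ that $\psi$ can detect is a bounded-quantifier-rank invariant, and completeness of the theory pins it down. I expect that rather than reconstructing this from scratch one would lean on a back-and-forth / Ehrenfeucht–Fraïssé argument over linear orders with unary predicates — or, as Rubin does, on the decomposition machinery available for CLOs — to show that for each quantifier rank there are only finitely many possibilities for the relevant type of $L$, and hence the disjunction is finite. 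The Boolean and atomic cases, and the bookkeeping of replacing external parameters by constants using convexity, are routine.
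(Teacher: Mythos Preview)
Your proposal is correct and follows exactly the approach the paper sketches: the paper does not give a full proof (it cites this as Corollary~2.3 of \cite{RubinTheoriesOfLinearOrder}) but only notes that convexity makes the atomic step trivial and that ``by an inductive argument, we get the above lemma,'' which is precisely your outline, with you supplying more detail on the quantifier step than the paper does.

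One small correction worth flagging: in handling a witness $y\in L$, what pins down which $\chi_j$'s to include in your finite disjunction is not completeness of $\th(A)$ but simply that $L$ together with the parameters it contains is \emph{fixed} data once $A$, $B$, and the parameters are given. The EF/Feferman--Vaught argument you correctly invoke shows that whether $A\models\exists y\,(y\in L\wedge\psi(y,\bar b))$ holds depends only on the bounded-quantifier-rank type of $(L,\bar c_L)$ and of $(B,\bar b)$; the former is already determined by the hypotheses of the lemma, so nothing about the theory needs to ``decide'' it. This misattribution does not affect the validity of your argument.
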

	
	This is perhaps the reason that CLOs are so nice from a logical perspective.  Because $B$ is convex, the order type of some $b\in B$ and some $a\in A$ is determined by $a$ and the fact that $b\in B$; that is, for any $b,b'\in B$, $b<a$ if and only if $b'<a$.  The rest of the atoms are unary, so hold in $B$ exactly as they would in $A$.  So by an inductive argument, we get the above lemma.
	
	This is used to tremendous effect throughout \cite{RubinTheoriesOfLinearOrder}, primarily to prove that given some CLOs $A\subset B$, we can conclude $A\prec B$\footnote{Here and throughout, $\prec$ represents elementary substructure, following \cite{MarkerMT} for example.}.  We will cite numerous lemmas from \cite{RubinTheoriesOfLinearOrder} which are of this form, and their proofs are all essentially of this form.  We do not reproduce these arguments here, though we do need to produce one ourselves for Lemma~\ref{LocallySimpleManyModelsLemma}, so that the reader can get some of the flavor.  It is our opinion that all of our results on CLOs hinge on two points: the ease of constructing models through sums, and some form of Lemma~\ref{RelativizationLemma}.

	\subsection{Sums and Shuffles}
	
	We now introduce two classical operations, the sum and the shuffle, which go back at least to Hausdorff.  Due to the absence of ``prime models over sets'' in general, we will rely on these operations to construct new models of our theories.  We first examine the notion of a \emph{sum}; if $(I,<)$ is a linear order and for each $i$, $A_i$ is a CLO in the language $L$, we can define $\sum_i A_i$ in the natural way.  It has universe $\{(a,i):a\in A_i, i\in I\}$.  We say $(a,i)<(b,j)$ if $i<j$, or if $i=j$ and $a<b$ in $A_i$.  For any color $P$ in $L$, we say $P(a,i)$ holds in the sum if $P(a)$ holds in $A_i$.  This is an extremely well-behaved operation, and the following properties can be verified immediately (or see \cite{RosensteinLinearOrderings}):
	
	\begin{prop}\label{BasicSumProp}
		Let $(I,<)$ be a linear order and let $(A_i:i\in I)$ be CLOs in the same language $L$.
		
		\begin{enumerate}
			\item If $A_i\iso B_i$ for all $i$, then $\sum_i A_i\iso \sum_i B_i$.
			\item If $A_i\equiv B_i$ for all $i$, then$ \sum_i A_i\equiv \sum_i B_i$.
		\end{enumerate}
	\end{prop}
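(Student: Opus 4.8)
The plan is to treat the two parts separately: the first by writing down an explicit isomorphism, the second by an Ehrenfeucht--Fra\"iss\'e argument.

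For part (1), suppose $f_i\colon A_i\to B_i$ is an isomorphism for each $i\in I$, and define $f\colon\sum_i A_i\to\sum_i B_i$ by $f(a,i)=(f_i(a),i)$. Since each $f_i$ is a bijection, so is $f$. To see that $f$ preserves $<$: if $(a,i)<(b,j)$ then either $i<j$, in which case $(f_i(a),i)<(f_j(b),j)$ directly by the definition of the sum, or $i=j$ and $a<b$ in $A_i$, in which case $f_i(a)<f_i(b)$ in $B_i$ and hence $(f_i(a),i)<(f_i(b),i)$; the converse is symmetric. For a color $P$, $P(a,i)$ holds in $\sum_i A_i$ iff $P(a)$ holds in $A_i$ iff $P(f_i(a))$ holds in $B_i$ iff $P(f_i(a),i)$ holds in $\sum_i B_i$. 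So $f$ is an isomorphism, and $\sum_i A_i\iso\sum_i B_i$.

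For part (2), fix $n\in\mathbb{N}$; by the Ehrenfeucht--Fra\"iss\'e theorem it is enough to show that Duplicator wins the $n$-round EF game on $\bigl(\sum_i A_i,\sum_i B_i\bigr)$, knowing that $A_i\equiv B_i$, i.e.\ that Duplicator wins the $n$-round game on each $(A_i,B_i)$. Duplicator's strategy is to ``answer in the same block'': whenever Spoiler plays an element $(a,i)$ on one side, Duplicator replies with an element of the block indexed by the same $i\in I$ on the other side, chosen according to a fixed winning strategy for the $n$-round game on $(A_i,B_i)$. The invariant, maintained after each round $r$, is that the played pairs have matching $I$-indices and that, restricted to each block $i$, the current position is a winning position for Duplicator in the game on $(A_i,B_i)$ with at least $n-r$ rounds remaining. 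Because the indices agree on both sides, the order between elements of distinct blocks is automatically respected; within a block both $<$ and the colors are handled by the block strategy. After $n$ rounds the resulting partial map is a partial isomorphism, so Duplicator wins, and hence $\sum_i A_i\equiv\sum_i B_i$.

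The one point needing care in part (2) is the round bookkeeping: one must phrase the per-block invariant so that it is genuinely preserved, noting that a single move of the composite game disturbs exactly one block (so $n$ composite rounds never force more than $n$ rounds in any one block), and that when Spoiler opens a previously untouched block the block strategy simply starts from the empty position with all $n$ rounds available. Everything else is a routine unwinding of the definition of the sum. (Alternatively, part (2) is an instance of the Feferman--Vaught theorem for generalized sums, but the direct EF argument above is short enough to give outright, which is why the statement claims it can be ``verified immediately.'')
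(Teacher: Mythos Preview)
Your argument is correct and is exactly the standard verification the paper has in mind; indeed the paper gives no proof of this proposition at all, merely asserting that it ``can be verified immediately'' or referring to Rosenstein, and in the very next proposition it carries out essentially the same EF game you describe. One small caveat: the Ehrenfeucht--Fra\"iss\'e theorem in the direction you invoke (from $A_i\equiv B_i$ to a winning strategy for Duplicator in $G_n(A_i,B_i)$) requires the language to be finite, whereas a CLO may carry countably many colors; you should first reduce to an arbitrary finite sublanguage $L'\subset L$, exactly as the paper does in its proof of Proposition~\ref{DefinableSumProp1} (``So as usual we may assume the language is finite'').
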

	
	We use the familiar notation $A_1+\cdots+A_n$ for finite sums.  If $C\subset A$ is convex, then $A$ decomposes as a sum $B_1+C+B_2$, where $B_1$ is the set of elements below every element of $C$, and likewise with $B_2$.  Note that $B_1$ or $B_2$ (or both) may be empty.
	
	\begin{proposition}\label{DefinableSumProp1}
		Suppose $\Phi(x)$ is a partial type\footnote{That is, a set of formulas without parameters with at most $x$ free.}, $A$ is a CLO, and $C=\Phi(A)$ is convex.  Decompose $A$ as $B_1+C+B_2$.  If $D\equiv C$, then define $A_D$ as $B_1+D+B_2$.  Then $A_D\equiv A$ and $\Phi(A_D)=D$.
	\end{proposition}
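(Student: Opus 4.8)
The plan is to build the elementary equivalence $A_D \equiv A$ out of Proposition~\ref{BasicSumProp}(2) together with the relativization lemma, Lemma~\ref{RelativizationLemma}, and then handle the claim $\Phi(A_D) = D$ separately. First I would observe that $D \equiv C$ as $L$-structures, while trivially $B_1 \equiv B_1$ and $B_2 \equiv B_2$; applying Proposition~\ref{BasicSumProp}(2) to the three-term index order $\{1 < 2 < 3\}$ with summands $(B_1, C, B_2)$ and $(B_1, D, B_2)$ immediately gives $B_1 + C + B_2 \equiv B_1 + D + B_2$, i.e.\ $A \equiv A_D$. So the elementary equivalence is essentially free once we recall that $A$ genuinely decomposes as the sum $B_1 + C + B_2$ in the sense of Section~2.1 — which it does, since $C$ is convex, with $B_1$ the elements below all of $C$ and $B_2$ the elements above.

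The substantive point is $\Phi(A_D) = D$, and here is where I expect the real work to be. We need two inclusions. For $\Phi(A_D) \supseteq D$: take $d \in D \subseteq A_D$ and a formula $\psi(x) \in \Phi$. By Lemma~\ref{RelativizationLemma} applied inside $A$ to the convex set $C$, there is a parameter-free $\psi^{\#}(x)$ with $C \models \psi^{\#}(c) \iff A \models \psi(c)$ for all $c \in C$; since $\psi \in \Phi$ and $C = \Phi(A)$, every $c \in C$ satisfies $\psi$ in $A$, so $C \models \forall x\, \psi^{\#}(x)$. As $D \equiv C$, also $D \models \forall x\, \psi^{\#}(x)$, so $D \models \psi^{\#}(d)$. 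Now apply Lemma~\ref{RelativizationLemma} the other direction — inside $A_D$, using the convex set $D$ and the same construction of $\#$ (the key observation being that the relativized formula $\psi^{\#}$ produced by Rubin's inductive construction depends only on $\psi$, not on the ambient model, because convexity pins down all the order-atoms and the unary atoms are absolute) — to conclude $A_D \models \psi(d)$. Since $\psi \in \Phi$ was arbitrary, $d \in \Phi(A_D)$.

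For the reverse inclusion $\Phi(A_D) \subseteq D$: suppose $a \in A_D \setminus D$, so $a \in B_1$ or $a \in B_2$; say $a \in B_1$. We must find some $\psi \in \Phi$ with $A_D \not\models \psi(a)$. Since $C = \Phi(A)$ is exactly the solution set of $\Phi$ in $A$ and $a \in B_1 \subseteq A$ lies outside $C$, there is $\psi \in \Phi$ with $A \not\models \psi(a)$. Running the relativization argument one more time — this time relativizing to the convex set $B_1$, which sits identically inside both $A = B_1 + C + B_2$ and $A_D = B_1 + D + B_2$ — gives a parameter-free $\psi^{\flat}$ with $B_1 \models \psi^{\flat}(b) \iff A \models \psi(b)$ and equally $B_1 \models \psi^{\flat}(b) \iff A_D \models \psi(b)$, for all $b \in B_1$. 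From $A \not\models \psi(a)$ we get $B_1 \not\models \psi^{\flat}(a)$, hence $A_D \not\models \psi(a)$, so $a \notin \Phi(A_D)$. The case $a \in B_2$ is symmetric.

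The main obstacle is purely bookkeeping: one has to be careful that the relativized formula $\phi^{\#}$ furnished by Lemma~\ref{RelativizationLemma} for a convex piece is "the same" whether we sit that piece inside $A$ or inside $A_D$ — but this is exactly the content of Rubin's proof sketch reproduced in the paragraph after Lemma~\ref{RelativizationLemma}: the translation is defined by induction on $\phi$ using only (i) that for a convex $B$ and external $a$, the truth of $b < a$ is constant over $b \in B$, and (ii) absoluteness of the unary predicates; neither ingredient references anything outside $B$ beyond the data "$B$ is a convex subset," which is identical in $A$ and $A_D$. Once that is noted, all three invocations of relativization above are legitimate and the proof is complete. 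I would present it in essentially this order: first the equivalence via Proposition~\ref{BasicSumProp}(2), then $D \subseteq \Phi(A_D)$, then $\Phi(A_D) \subseteq D$, with the absoluteness remark stated once up front.
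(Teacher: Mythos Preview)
Your route differs from the paper's. The paper never invokes Lemma~\ref{RelativizationLemma} or Proposition~\ref{BasicSumProp}; instead it adds a fresh predicate $P$ marking $C$ (respectively $D$), then runs a single Ehrenfeucht--Fra\"iss\'e argument showing $(A,\o b_1,\o b_2)\equiv(A_D,\o b_1,\o b_2)$ in the expanded language for arbitrary tuples $\o b_1$ from $B_1$ and $\o b_2$ from $B_2$. Both conclusions drop out at once: $A\equiv A_D$ is immediate, and $\Phi(A_D)=D$ follows because the equivalence holds \emph{with parameters}, so $A\models\lnot\psi(b)$ transfers directly to $A_D\models\lnot\psi(b)$ for each $b\in B_1\cup B_2$ and each $\psi\in\Phi$, while $D\subset\Phi(A_D)$ comes from the sentence $\forall x\,(P(x)\to\psi(x))$.

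Your argument has a genuine gap at exactly the spot you flag. The claim that $\psi^\#$ ``depends only on $\psi$, not on the ambient model'' is false: Rubin's induction handles a quantifier $\exists y$ by splitting into $y$ below, in, or above the convex piece, and the outer two cases require evaluating sentences about the complement in the ambient model. Concretely, for $\psi(x)=\exists y\,(y>x\land P_0(y))$ relativized to an initial segment $B_1$, one obtains $\psi^\flat$ equivalent to $x=x$ if the ambient model has a $P_0$-point above $B_1$ and $\psi^\flat(x)=\exists y\,(y>x\land P_0(y))$ otherwise---so $\psi^\flat$ genuinely changes with the model. Your atom-level remark does not cover this. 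The repair is to note that the relativization depends only on the \emph{elementary theory} of the complement (a Feferman--Vaught-type statement, which is what really underlies Rubin's lemma): in your first use the complements are literally the same sets $B_1$ and $B_2$, and in your second use the complements $C+B_2$ and $D+B_2$ are elementarily equivalent by Proposition~\ref{BasicSumProp}(2). Once that is said your argument goes through, but the paper's direct EF game with parameters delivers the same conclusion in one stroke without the detour.
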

	\begin{proof}
		First, add a new predicate $P$ to the language, and let $P(a)$ hold for some $a\in A$ if and only if $a\in C$.  Expand $D$ to the new language to let $P$ hold everywhere, and expand $A_D$ to make $P$ true only on $D$.  We show that for all tuples $\o b_1$ and $\o b_2$ from $B_1$ and $B_2$ respectively, $(A,\o b_1,\o b_2)\equiv (A_D,\o b_1,\o b_2)$ in the expanded language.  This is done by an Ehrenfeucht-Fra\"isse game argument. So as usual we may assume the language is finite, fix an $n\in\omega$, and describe a strategy for the second player to win the game of length $n$. Since $C\equiv D$, fix a winning strategy for the second player in the game of length $n$ between $C$ and $D$.  Then for any play, if the first player plays an element of $B_1$ or $B_2$ from one model, the second player plays the same element in the other model.  If the first player plays within $C$ or $D$, the second player follows the winning strategy for those two.  This is well-defined and clearly preserves colors and $<$ within components.  Since the components are convex and we stay within them, this preserves $<$ generally, so proves the result.
		
		That $A\equiv A_D$ follows immediately.  To see that $\Phi(A_D)=D$, first note that $A\models \forall x(P(x)\to \phi(x))$ for all $\phi\in \Phi$, so $D\subset \Phi(A_D)$.  On other hand, for any $b\in A\setminus C$, there is a $\phi\in\Phi$ where $A\models \lnot\phi(b)$.  Since $(A,b)\equiv (A_D,b)$, $A_D\models\lnot\phi(b)$, so $\Phi(A_D)\subset D$, proving the proposition.
	\end{proof}
	
	Next we define the shuffle.  To do this, fix a natural number $n$, and form a countable structure $\m D_n$ in the language $L_n=\{<,P_1,\ldots,P_n\}$ satisfying the following axioms:
	
	\begin{itemize}
		\item $<$ is a linear order which is dense and without endpoints.
		\item The $P_i$ are disjoint, dense, codense, and exhaustive.
	\end{itemize}
	
	It is easy to see that these axioms are consistent and $\aleph_0$-categorical (hence complete), so $\m D_n$ is defined up to isomorphism.  Now for any language $L$ and any CLOs $A_1,\ldots,A_n$, we form the \emph{shuffle} $\sigma(A_1,\ldots,A_n)$ as follows.  For each $i\in \m D_n$, define $D_i$ as $A_j$ if and only if $P_j(i)$ holds.  Then $\sigma(A_1,\ldots,A_n)$ is the sum $\sum_i D_i$.  The following facts are easily verified:
	
	\begin{proposition}\label{GeneralShuffleProposition}
		Let $A_1,\ldots,A_n$ be countable CLOs in the same language $L$.  Then all the following hold:
		
		\begin{enumerate}
			\item If $\tau$ is a permutation of $\{1,\ldots,n\}$, then $\sigma(A_1,\ldots,A_n)\iso \sigma(A_{\tau(1)},\ldots, A_{\tau(n)})$.
			\item If for all $i$, $A_i\equiv B_i$, then $\sigma(A_1,\ldots,A_n)\equiv \sigma(B_1,\ldots, B_n)$.
			\item If for all $i$, $A_i\iso B_i$, then $\sigma(A_1,\ldots,A_n)\iso \sigma(B_1,\ldots,B_n)$.
		\end{enumerate}
	\end{proposition}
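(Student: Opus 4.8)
The plan is to derive all three parts from the general facts about sums in Proposition~\ref{BasicSumProp} together with the $\aleph_0$-categoricity of $\m D_n$. The key observation is that, by definition, $\sigma(A_1,\dots,A_n)=\sum_{i\in\m D_n}D_i$ and $\sigma(B_1,\dots,B_n)=\sum_{i\in\m D_n}E_i$ are both sums indexed by the \emph{same} linear order $(\m D_n,<)$, where $D_i=A_j$ and $E_i=B_j$ for the unique $j$ with $P_j(i)$.

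For parts (2) and (3) there is then almost nothing to do. Since the two sums run over the same index order, I only need the $i$-th summands to be related appropriately. If $A_i\equiv B_i$ for all $i$, then for every $i\in\m D_n$ we have $D_i=A_j\equiv B_j=E_i$, so Proposition~\ref{BasicSumProp}(2) gives $\sigma(A_1,\dots,A_n)=\sum_i D_i\equiv\sum_i E_i=\sigma(B_1,\dots,B_n)$. Part (3) is identical, replacing $\equiv$ by $\iso$ and citing Proposition~\ref{BasicSumProp}(1); here the hypothesis that the $A_i$ are countable is what makes the shuffles genuinely defined.

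For part (1) the content is entirely about $\m D_n$. Relabeling the predicates of $\m D_n$ by $\tau^{-1}$ — that is, declaring $P_{\tau^{-1}(\ell)}^{\m D_n}$ to be the new ``$P_\ell$'' — produces an $L_n$-structure still satisfying the shuffle axioms, since ``the $P_i$ are disjoint, dense, codense, and exhaustive'' is symmetric under permuting them. Those axioms are $\aleph_0$-categorical, so this relabeled structure is isomorphic to $\m D_n$; unwinding, there is an isomorphism $h\colon\m D_n\to\m D_n$ of $L_n$-structures with $h(P_j^{\m D_n})=P_{\tau^{-1}(j)}^{\m D_n}$ for each $j$. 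I then claim $(a,i)\mapsto(a,h(i))$ is an isomorphism from $\sigma(A_1,\dots,A_n)$ onto $\sigma(A_{\tau(1)},\dots,A_{\tau(n)})$: it is a bijection because $h$ is; it preserves $<$ and the colors inside each summand because $a\mapsto a$ does, and preserves $<$ across summands because $h$ preserves $<$ on $\m D_n$; and it matches summands because if $i\in P_j^{\m D_n}$ the $i$-summand of the left shuffle is $A_j$, while $h(i)\in P_{\tau^{-1}(j)}^{\m D_n}$ means the $h(i)$-summand of the right shuffle is $A_{\tau(\tau^{-1}(j))}=A_j$.

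The only step requiring real care is part (1): one must fix the direction of the permutation ($\tau$ versus $\tau^{-1}$) and state precisely how $\aleph_0$-categoricity of the shuffle axioms turns the abstract symmetry ``permute the predicate labels'' into the concrete isomorphism $h$. Parts (2) and (3) are pure bookkeeping on top of Proposition~\ref{BasicSumProp}.
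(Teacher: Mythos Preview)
Your argument is correct and is precisely the natural verification: reduce (2) and (3) to Proposition~\ref{BasicSumProp} by noting that both shuffles are sums over the same copy of $\m D_n$ with termwise-related summands, and handle (1) by using $\aleph_0$-categoricity of $\m D_n$ to produce an order-automorphism permuting the predicate classes, then lifting it coordinatewise. The paper gives no proof of this proposition, only the remark that the facts ``are easily verified,'' so there is nothing further to compare against.
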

	
	While the shuffle may seem somewhat arbitrary, it is important in Rosenstein's characterization of $\aleph_0$-categorical CLOs, and will come up in a natural way in Section~\ref{SelfAdditiveSection}.

	\subsection{$\aleph_0$-categorical Theories}
	
	By convention, we will refer to a \emph{structure} (of any size) as $\aleph_0$-categorical if and only if its complete theory has a unique countable model up to isomorphism.  Following \cite{RosensteinCategoricalOrders}, we will also consider \emph{finite} structures (and their complete theories) to be $\aleph_0$-categorical.
	
	In Section~\ref{IntervalTypesSection}, we will make important use of Rosenstein's characterization of $\aleph_0$-categorical linear orders in \cite{RosensteinCategoricalOrders}, which was extended to CLOs by Mwesigye and Truss in \cite{MwesigyeTrussCategoricalColoredOrders}.  One begins by defining several classes, which we call $\m M_n$.
	
	\begin{itemize}
		\item $\m M_0$ is the set of all one-point CLOs; the colors can be arbitrary.
		\item $\m M_{n+1}$ is the smallest class of CLOs such that all the following are satisfied:
		
		\begin{itemize}
			\item If $A\in\m M_n$, then $A\in \m M_{n+1}$.
			\item If $A,B\in\m M_n$, then $A+B\in\m M_{n+1}$.
			\item If $A_1,\ldots,A_k\in \m M_n$, then $\sigma(A_1,\ldots,A_k)\in\m M_{n+1}$.
		\end{itemize}
		\item $\m M$ is the union $\bigcup_n \m M_n$.
	\end{itemize}
	
	The characterization is:
	
	\begin{thm}[Rosenstein;  Mwesigye, Truss]\label{Aleph0CatCharacterizationThm}
		Let $T$ be a CLO.  Then $T$ is $\aleph_0$-categorical if and only if $T=\th(A)$ for some $A\in\m M$.
	\end{thm}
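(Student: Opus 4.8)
The plan is to prove the equivalence in two directions. For the easy direction, suppose $T = \th(A)$ for some $A \in \m M_n$; I would argue by induction on $n$ that $T$ is $\aleph_0$-categorical. The base case $n = 0$ is trivial since a one-point CLO has only one countable model. For the inductive step, I would use the fact that $\m M_{n+1}$-structures are built from $\m M_n$-structures by finite sums and shuffles; by Propositions~\ref{BasicSumProp} and~\ref{GeneralShuffleProposition}, if each $A_i$ has a unique countable model up to isomorphism, then so does $A_1 + \cdots + A_k$ and $\sigma(A_1, \ldots, A_k)$, because any countable model of $\th(A_1 + \cdots + A_k)$ will definably decompose into pieces elementarily equivalent to (hence, by induction, isomorphic to the countable models of) the $A_i$, and these decompositions are rigid enough to reconstruct the isomorphism. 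The delicate point is to see that the pieces are \emph{definable}: in the sum case, the "boundary" between the summands can be named using the unary predicates that distinguish the finitely many types occurring, and in the shuffle case the dense codense pattern is determined by $\m D_n$'s axioms, which are $\aleph_0$-categorical.

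For the harder direction, I would appeal directly to the cited theorem: this is precisely the content of Rosenstein's characterization in \cite{RosensteinCategoricalOrders} (for the pure-order case) as extended by Mwesigye and Truss in \cite{MwesigyeTrussCategoricalColoredOrders} to colored linear orders. Since the statement is attributed to these authors, the honest approach is to cite their work rather than reprove it. If a sketch is wanted, the idea is: given an $\aleph_0$-categorical CLO $T$, one shows the type space $S_1(T)$ is finite, and then by an analysis of how the finitely many $1$-types sit inside a model (using convexity and Lemma~\ref{RelativizationLemma} to relativize formulas to convex pieces), one shows any model is built up from one-point pieces by finitely many alternating layers of sums and shuffles — this is exactly membership in some $\m M_n$, where $n$ bounds the "depth" of alternation.

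The main obstacle, for a self-contained treatment, is the forward direction: extracting from $\aleph_0$-categoricity the bounded finite "shuffle-sum rank." This requires controlling the structure of all countable models simultaneously, and the natural tool is a Cantor–Bendixson-style analysis of $S_1(T)$ combined with the definability afforded by Lemma~\ref{RelativizationLemma}. Since the theorem is stated as due to Rosenstein, Mwesigye, and Truss, I would not attempt to reprove this here; the role of Theorem~\ref{Aleph0CatCharacterizationThm} in this paper is as a black-box input for Section~\ref{IntervalTypesSection}, so a citation suffices and the easy direction above can be included for completeness if desired.
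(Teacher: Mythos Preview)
The paper does not prove this theorem at all; it is stated with attribution and used as a black box, exactly as you recommend in your proposal. So your overall approach---cite Rosenstein and Mwesigye--Truss rather than reprove---is precisely what the paper does.

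That said, your optional sketch of the easy direction has a genuine gap. You claim that in the sum case ``the boundary between the summands can be named using the unary predicates that distinguish the finitely many types occurring,'' but this is false in general: take $A_1 = A_2 = (\b Q,<)$ with no colors. Then $A_1 + A_2 \cong (\b Q,<)$, there is only one $1$-type, and no formula can pick out the cut between the summands. The conclusion still holds (the sum is $\aleph_0$-categorical), but not by your argument. The correct route for the easy direction is via Ryll--Nardzewski: one shows directly that $S_n(\th(A_1+A_2))$ is finite by an Ehrenfeucht--Fra\"iss\'e or Feferman--Vaught style argument, bounding the number of $n$-types in the sum in terms of $|S_n(T_1)|$ and $|S_n(T_2)|$ without ever needing the summands to be definable in the sum. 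A similar issue arises for shuffles. If you want to include the easy direction for completeness, rework it along these lines; otherwise, simply citing the result as the paper does is entirely adequate.
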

	
	Note that the above does make sense and is true even if the language is infinite, and we will rely on that.  However, it is nearly vacuous -- if a CLO is $\aleph_0$-categorical, only finitely many of its colors are inequivalent.
	
	Following \cite{RosensteinLinearOrderings}, we can also define a rank; if $A$ is an $\aleph_0$-categorical CLO, let $r(A)$ be the least $n$ where there is some $B\in\m M_n$ such that $A\equiv B$. This turns out to be a useful inductive tool, allowing us to prove all the following facts:
	
	\begin{proposition}\label{Aleph0CatGeneralFactsProp}
		Let $A$ be a (possibly uncountable) CLO in a language $L$.
		
		\begin{enumerate}
			\item If $A$ is $\aleph_0$-categorical, then every convex subset $B\subset A$ is also $\aleph_0$-categorical.  Indeed, $r(B)\leq 2\cdot r(A)+1$.
		\end{enumerate}
		
		If $L$ is finite, then we also get the following:
		
		\begin{enumerate}
			\setcounter{enumi}{1}
			\item For any $n\in\omega$, there are only finitely many $\aleph_0$-categorical CLOs in $L$ of rank $n$.
			\item For any $\aleph_0$-categorical $A$, there are only finitely many convex subsets of $A$, up to back-and-forth equivalence.  This bound is uniform in $r(A)$.
		\end{enumerate}
	\end{proposition}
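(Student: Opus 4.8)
The plan is to deduce (1) from a statement internal to the classes $\m M_n$, to prove (2) by an independent induction, and to obtain (3) formally from (1) and (2); recall throughout that $r$ depends only on the elementary-equivalence class of a structure. The reduction for (1) rests on an invariance observation: if $A\equiv A'$ are CLOs, then every convex $C\subseteq A$ is elementarily equivalent to a convex subset of $A'$. This is an Ehrenfeucht--Fra\"iss\'e argument in the spirit of Lemma~\ref{RelativizationLemma}: convexity lets one pin a convex piece down by its pair of boundary cuts, and since $\aleph_0$-categoricity leaves only finitely many cut types, one convex subset of $A'$ can be made to work for all sufficiently long games. Granting this, and recalling that $r(A)=n$ means $A\equiv B$ for some $B\in\m M_n$, it suffices to show: for every $n$ and every $B\in\m M_n$, each convex $C\subseteq B$ is $\aleph_0$-categorical with $r(C)\leq 2n+1$. ($\aleph_0$-categoricity of $C$ follows once $C\equiv$ a member of $\m M$, by Theorem~\ref{Aleph0CatCharacterizationThm}.)

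I would prove this by induction on $n$. For $n=0$, $B$ is a point, so $C$ is empty or a point and $r(C)\leq 1$. For the step take $B\in\m M_{n+1}$. If $B\in\m M_n$, the previous case applies. If $B=B_1+B_2$ with $B_i\in\m M_n$, then $C=(C\cap B_1)+(C\cap B_2)$ with each $C\cap B_i$ convex in $B_i$, so by the inductive hypothesis and two uses of the sum clause $r(C)\leq(2n+1)+2=2(n+1)+1$. If $B=\sigma(B_1,\ldots,B_k)=\sum_{j\in\m D_k}D_j$, then a convex $C\subseteq B$ either meets a single block, in which case it is a convex subset of some $B_i$ and has rank $\leq 2n+1$; or it meets at least two blocks, in which case the indices of the blocks it meets form an interval of $\m D_k$ with at least two elements, the blocks strictly inside it lie wholly in $C$, and at the at most two endpoints we see a convex subset of some $B_i$. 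Since a nonempty open interval or open ray of $\m D_k$ again satisfies the defining axioms and so is isomorphic to $\m D_k$, the middle of $C$ is isomorphic to $\sigma(B_1,\ldots,B_k)\in\m M_{n+1}$; hence $C\equiv$ a sum of at most three CLOs of ranks $\leq 2n+1$, $\leq n+1$, $\leq 2n+1$, and two applications of the sum clause starting from $\m M_{2n+1}$ put it in $\m M_{2n+3}=\m M_{2(n+1)+1}$.

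For (2), with $L$ finite, induct on $n$, counting $\aleph_0$-categorical CLOs up to $\equiv$ (equivalently, as complete theories). There are finitely many one-point CLOs since $L$ has only finitely many predicates, settling $n=0$. If there are $m$ CLOs of rank $\leq n$, then a CLO of rank $n+1$ is $\equiv$ to $B_1+B_2$ or to $\sigma(B_1,\ldots,B_k)$ with each $B_i$ of rank $\leq n$; the sums give at most $m^2$ classes, and by Proposition~\ref{GeneralShuffleProposition} together with the isomorphism $\m D_k\cong\m D_{k'}$ obtained by amalgamating colors, $\sigma(B_1,\ldots,B_k)$ depends up to $\equiv$ only on the set $\{[B_1],\ldots,[B_k]\}$ of $\equiv$-classes, hence on one of at most $2^m$ subsets. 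So there are finitely many CLOs of rank $n+1$, completing the induction. For (3): by (1) every convex subset of an $\aleph_0$-categorical $A$ has rank $\leq 2r(A)+1$, so the number of convex subsets of $A$ up to back-and-forth equivalence is at most the number of $\aleph_0$-categorical CLOs of rank $\leq 2r(A)+1$, which by (2) is finite and depends only on $r(A)$ and the fixed $L$.

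The main obstacle is the invariance observation opening the argument for (1): extracting from the Ehrenfeucht--Fra\"iss\'e comparison of $A$ and $A'$ a single convex $C'\equiv C$ rather than one witness per round is exactly where the finiteness of the set of cut types (from $\aleph_0$-categoricity) and Lemma~\ref{RelativizationLemma} must be combined carefully. Everything after that is routine accounting of ranks through sums and shuffles, the only real structural input being that proper intervals of $\m D_k$ reproduce $\m D_k$.
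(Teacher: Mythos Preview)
Your inductive argument on structures in $\m M_n$ for (1), and your arguments for (2) and (3), are essentially identical to the paper's. The one substantive difference is how you reduce part (1) to the case of a structure actually lying in $\m M_n$. You propose an ``invariance observation'' (every convex subset of $A$ is elementarily equivalent to a convex subset of any $A'\equiv A$) and correctly flag it as the main obstacle; your EF-and-cut-types sketch for it is plausible but vague, and making it precise is not entirely routine. The paper sidesteps this completely with a one-line L\"owenheim--Skolem reduction: add a unary predicate for $C$, pass to a countable $(A_0,C_0)\prec(A,C)$, and note that $C_0\equiv C$, that $C_0$ is convex in $A_0$, and that $A_0$ is a countable $\aleph_0$-categorical CLO and hence isomorphic to some member of $\m M_{r(A)}$. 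The induction then applies directly to $A_0$ and $C_0$, and the rank bound for $C_0$ transfers to $C$ since $r$ depends only on the elementary-equivalence class. So your ``main obstacle'' simply disappears; your route is not wrong, just longer than necessary.
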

	\begin{proof}
		(1) First, assume $A$ is countable; we will generalize in a moment.  We show this by induction on rank.  It is trivially true for $r(A)=0$.  So let $r(A)=n+1$.  Then either $A=B_1+B_2$ for some $B_i\in\m M_n$, or $A=\sigma(B_1,\ldots,B_k)$ for some $B_i\in \m M_n$.  In the first (sum) case, if $C\subset B_1+B_2$ is convex, then $C=(B_1\cap C)+(B_2\cap C)$, where each $B_i\cap C$ is a convex subset of the $B_i$.  By induction, $r(B_i\cap C)\leq r(B_i)+1\leq n+1$, so $C$ is the sum of two CLOs with rank at most $n+1$, so $r(C)\leq n+2\leq 2(n+1)+1$, as desired.
		
		In the other (shuffle) case, if $C\subset \sigma(B_1,\ldots,B_k)$ is convex, then $C$ is either $B_i\cap C$ for some $i$, or $(B_{i_1}\cap C)+\sigma(B_1,\ldots,B_k)+(B_{i_2}\cap C)$, where either of the $B_{i_j}$ could be empty.  This is because the left ``edge'' of $C$ either slips exactly between $B_i$ components or cuts into one (corresponding to $B_{i_1}$ being empty or some $B_i$, respectively).  Similarly with the right ``edge.''  If these cut into the same $B_i$ component, there is no shuffle and $C$ is a convex subset of $B_i$, so has rank at most $2 r(B_i)+1\leq 2n+1$.  If they cut into different components, there is an isomorphic copy of the shuffle of the respective $B_i$.  The shuffle has rank $n+1$, while each of the sides has at most $2n+1$, so $r(C)\leq (2n+1)+1+1=2n+3=2(n+1)+1$, as desired.
		
		For the case when $A$ may be uncountable, let $C\subset A$ be convex, and let $(A,C)$ be the structure with an unary predicate for $C$.  Let $(A_0,C_0)\prec (A,C)$ be countable, noting that $C\equiv C_0$, $A\equiv A_0$, and $C_0$ is a convex subset of $A_0$.  Then the preceding special case applies to $(A_0,C_0)$, and by elementary equivalence, the result for $A_0$ and $C_0$ implies it for $A$ and $C$, as desired.
		
		(2) If there are $k$ distinct unary predicates in $L$, there are $2^k$ one-point CLOs, so there are $2^k$ elements of $\m M_0$.  If $\m M_n$ has $m$ elements, then $\m M_{n+1}$ has $m$ elements from $\m M_n$, $m^2$ elements as sums from $\m M_n$, and $\sum_{i=1}^m \binom{m}{i}$ elements as shuffles from $\m M_n$.  So $\m M_{n+1}$ is finite, as desired.
		
		(3) If $B$ is a convex subset of some $A$ with $r(A)\leq n$, then $r(B)\leq 2n+1$ by (1).  By (2), there is a finite number of $\aleph_0$-categorical CLOs of rank at most $2n+1$, and this depends only on $n$.
	\end{proof}
	
	Finally, we include Corollary~5.11 of \cite{RubinTheoriesOfLinearOrder}:
	
	\begin{theorem}[Rubin]\label{FiniteAxiomatizationTheorem}
		If $T$ is a CLO in a finite language and $S_1(T)$\footnote{Here and throughout, $S_1(T)$ refers to the set of all complete 1-types in some variable $x$ with no parameters.} is finite, then $T$ is finitely axiomatizable.
	\end{theorem}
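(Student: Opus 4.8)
The plan is to reduce finite axiomatizability to an isolation statement and then extract from $|S_1(T)|<\aleph_0$ enough structural rigidity to verify that isolation. Since $L$ is finite (hence relational), there are, up to logical equivalence, only finitely many sentences of each quantifier rank; consequently a complete $L$-theory $T$ is finitely axiomatizable exactly when there is some $N$ for which the conjunction $\theta_N$ of all quantifier-rank $\leq N$ consequences of $T$ already implies $T$. Unwinding $\theta_N\proves T$, it suffices to produce one $N$ such that any CLO $M'$ satisfying precisely the same sentences of quantifier rank $\leq N$ as a fixed $M\models T$ is elementarily equivalent to $M$. Because $S_1(T)$ is finite, each of its $1$-types is isolated by a formula of bounded quantifier rank, so once $N$ is large enough this already forces $M'$ to realize exactly the $1$-types occurring in $M$; the substance is to upgrade ``same $1$-types'' to ``same theory,'' and for that I would prove and then use a definable decomposition of $T$.

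The structural core has three parts. First, one shows that when $S_1(T)$ is finite every $M\models T$ admits a finite $\emptyset$-definable decomposition: $M$ is a finite sum of convex $\emptyset$-definable pieces, each of which is a shuffle of finitely many self-additive CLOs, and every CLO appearing in this description again has only finitely many $1$-types (Lemma~\ref{RelativizationLemma} is what controls the number of $1$-types of a $\emptyset$-definable convex piece). Second, one classifies the self-additive CLOs with finitely many $1$-types: each such theory is either $\aleph_0$-categorical or of the periodic form $\th\!\left(\sum_{i\in\b Z} B\right)$ for a finite colored block $B$. In the first case $T=\th(A)$ with $A\in\m M$ by Theorem~\ref{Aleph0CatCharacterizationThm}, and finite axiomatizability follows by induction on $r(A)$: a point is trivially finitely axiomatizable, and the sum and shuffle operations send finitely axiomatizable CLOs to finitely axiomatizable ones, which one checks by a relativization argument using Lemma~\ref{RelativizationLemma} together with Propositions~\ref{BasicSumProp}(2) and \ref{GeneralShuffleProposition}(2). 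In the periodic case a finite axiom is immediate: discreteness without endpoints, plus the assertion that the local pattern around each element is the one determined by the finite block $B$. Third, one reassembles: given finite axioms for the finitely many self-additive constituents, a single sentence that quantifies over the ($\emptyset$-definable) decomposition and asserts that each piece satisfies its own axiom pins down $\th(M)=T$, again using Lemma~\ref{RelativizationLemma} to transfer those axioms into formulas over $M$.

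The main obstacle is the first and second parts of that structural core: that finitely many $1$-types forces a finite $\emptyset$-definable decomposition into self-additive pieces, and that self-additive CLOs with finitely many $1$-types are exactly the $\aleph_0$-categorical ones together with the $\b Z$-periodic ones over a finite block. This is precisely the rigidity that Rubin's invariant machinery in \cite{RubinTheoriesOfLinearOrder} is built to deliver (and that the present paper re-establishes in Sections~\ref{SelfAdditiveSection} and \ref{IntervalTypesSection}); granting it, the reduction of the first paragraph and the relativized bookkeeping of the third part are routine. Note that the argument never passes through Borel complexity: a $\b Z$-periodic CLO such as the pure order $(\b Z,<)$ is at once Borel complete and finitely axiomatizable, so the only finiteness genuinely exploited is that $S_1(T)$ and $L$ are finite.
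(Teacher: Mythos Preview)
The paper does not prove this theorem at all: it is stated as Rubin's Corollary~5.11 and simply cited.  So there is no ``paper's proof'' to match; one can only assess your sketch on its own merits.

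Your overall architecture --- induct on $|S_1(T)|$, peel off $\emptyset$-definable convex summands when $T$ is not self-additive, and use the $\sim$-condensation when it is --- is the right shape, and is essentially how Rubin proceeds.  But two of your structural claims are not correct as written, and they are exactly the load-bearing ones.

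First, the decomposition is not two-level (``finite sum of shuffles of self-additive CLOs''); it is genuinely recursive.  At the self-additive stage one lands in one of the two alternatives of Lemma~\ref{Rubin 5.4}.  In alternative (2) you do get a shuffle of the $T_i=\th(a/\!\sim)$, but those $T_i$ need not be self-additive, so you must recurse (possibly alternating sum and shuffle several times) before reaching anything you can call a leaf.  More seriously, alternative (1) is a leaf that is neither a sum nor a shuffle of strictly simpler pieces, and your scheme gives no way to decompose it.

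Second, your classification of the self-additive leaves is false.  Consider $T=\th\!\big(\sum_{i\in\b Z}(\b Q_0+\b Q_1)\big)$ in the language $\{<,P_0,P_1\}$, where $\b Q_j$ is a copy of $(\b Q,<)$ colored entirely by $P_j$.  This $T$ is self-additive, has exactly two $1$-types, and falls under alternative (1) of Lemma~\ref{Rubin 5.4} (every two points lie in a common bounded convex set definable from one of them, so there is a single $\sim$-class).  It is not $\aleph_0$-categorical: $\sum_{\b Z}(\b Q_0+\b Q_1)$ and $\sum_{\b Z+\b Z}(\b Q_0+\b Q_1)$ are elementarily equivalent but not isomorphic, since an isomorphism would induce one between the quotient orders $\b Z$ and $\b Z+\b Z$.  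And it is certainly not $\th\!\big(\sum_{\b Z}B\big)$ for a \emph{finite} block $B$, since the order is dense.  So ``$\aleph_0$-categorical or $\b Z$-periodic over a finite block'' does not exhaust the self-additive CLOs with finitely many $1$-types, and the part of your argument that assigns a finite axiom to each leaf breaks here.  (The fix, roughly, is that alternative-(1) theories are $\b Z$-periodic over an $\aleph_0$-categorical block that is itself definable, but establishing that --- and extracting a finite axiom from it --- is real work, not routine bookkeeping.)  Note also that Sections~\ref{SelfAdditiveSection} and \ref{IntervalTypesSection} of this paper are aimed at Borel completeness and local simplicity, not at finite axiomatizability, so they do not supply the missing piece.
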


	\subsection{Measures of Complexity}
	
	The number of nonisomorphic countable models of a theory is deceptively coarse.  One excellent refinement of this idea is the idea of Borel reduction, introduced essentially in \cite{FriedmanStanleyBC}.
	
	Given two appropriate Borel spaces $X$ and $Y$ and two equivalence relations $E$ and $F$ on $X$ and $Y$, respectively, we say $f:X\to Y$ is a \emph{Borel reduction} if $f$ is Borel and it is a reduction; that is, for all $x,x'\in X$, $xEx'$ if and only if $f(x)Ff(x')$.  We introduce the notation $(X,E)\borelleq (Y,F)$ to mean there is a Borel reduction from $(X,E)$ to $(Y,F)$.  We say $(X,E)\borellt (Y,F)$ if $(X,E)\borelleq (Y,F)$ but not conversely, and we say $(X,E)\boreleq (Y,F)$ if both $(X,E)\borelleq (Y,F)$ and conversely.
	
	This is relevant to model theory as follows.  Given a sentence $\Phi\in L_{\omega_1\omega}$, our Polish space will be $\Mod_\omega(\Phi)$, the space of $L$-structures on $\omega$ which model $\Phi$.  This has the formula topology, where our basic open sets are $U_\phi=\{M\in \Mod_\omega(\Phi):M\models\phi(\o n)\}$ where $\phi(\o x)$ is a formula and $\o n$ is a tuple from $\omega^{|\o x|}$.  The equivalence relation will be $\iso$.  Thus we will say for example that $\Phi\borelleq \Psi$, when really we mean $(\Mod_\omega(\Phi),\iso)\borelleq (\Mod_\omega(\Psi),\iso)$.  This framework is identical the one in \cite{FriedmanStanleyBC} or \cite{GaoIDST}.  Also, since we only care about the Borel sets, this is equivalent to (for example) forming a subbasis of $U_\phi$ where $\phi$ must be an atomic formula.
	
	While the space of equivalence relations as ordered by $\borelleq$ is extremely vast and complicated, we will only need a few standard examples as comparators.  First, for any $n\in\omega$, the equality relation for an $n$-element space, denoted $(n,=)$, will be relevant.  Evidently $T\boreleq (n,=)$ if and only if $T$ has exactly $n$ countable models up to isomorphism.
	
	Next, we will define $\iso_1$ as the equality relation $(\b R,=)$; if $T$ ``provably in $\ZFC$'' has continuum-many models (see \cite{FriedmanStanleyBC} for a more precise statement), then $\iso_1\borelleq T$.  Define $\iso_2$ as the set equality relation $(\b R^\omega,E)$, where $fEg$ if and only if $\{f(n):n\in\omega\}$ and $\{g(n):n\in\omega\}$ are equal as sets.  It is a theorem of Marker in \cite{MarkerNonSmallTheories} that if $S_1(T)$ is uncountable, then $\iso_2\borelleq T$.  We refer the reader to \cite{HjorthKechrisLouveau}, \cite{GaoIDST}, or \cite{FriedmanStanleyBC} for the significance of the $\iso_\alpha$ hierarchy for countable ordinals $\alpha$, as well as proofs that they are distinct.
	
	Each of the preceding examples is minimal in an important sense, although these equivalence relations form a $\borellt$-strictly increasing chain.   On the other extreme, we say $\Phi\in L_{\omega_1\omega}$ is \emph{Borel complete} if, for any $\Psi\in L_{\omega_1\omega}$ in any countable language, $\Psi\borelleq \Phi$.  It is a theorem of Friedman and Stanley in \cite{FriedmanStanleyBC} that such objects exist and are somewhat plentiful; indeed the (incomplete) theory of linear orders is Borel complete.
	
	Generalizing this to uncountable models takes a bit of a mental shift.  Classical stability theory insists that linear orders are all unstable, so have $2^\kappa$ models of size $\kappa$ for all uncountable $\kappa$, and that this is the end of the story.  For our purposes however, $\th(\b Q,<)$ has only one model (of any size), for if we take $M,N\models\th(\b Q,<)$, then \emph{in any forcing extension in which $M$ and $N$ are countable}, $M\iso N$.  Consequently, while $M$ and $N$ may be nonisomorphic, there is no logical property distinguishing the two.  The way to make this precise is by use of back-and-forth equivalence.
	
	Two $L$-structures $M$ and $N$ are said to be back-and-forth equivalent, denoted $M\equiv_{\infty\omega}N$, if there is a back-and-forth system between them.  This is equivalent to $M$ and $N$ satisfying the same sentences of $L_{\infty\omega}$, or even $L_{\lambda^+\omega}$, where $\lambda=|M|$ (see for example \cite{BarwiseScottSentences} or \cite{KeislerMT}).  This is a highly absolute notion, so $M\equiv_{\infty\omega}N$ does not become true (or false) when moving between absolute models of $\ZFC$.  Further, if $M\iso N$, then $M\equiv_{\infty\omega}N$, and this reverses in the case where $M$ and $N$ are both countable.  So $M\equiv_{\infty\omega}N$ if and only if, in some (any) forcing extension $\b V[G]$ collapsing $|M|$ and $|N|$ to $\aleph_0$, $M\iso N$.  The details of this are worked out in \cite{UlrichLaskowskiRast} and in \cite{ModelsAndGames}.
	
	Evidently structures of different cardinalities can be back-and-forth equivalent, so it makes sense to count the number of models of $\Phi$, of \emph{any} cardinality, modulo back-and-forth equivalence.  We denote this count $I_{\infty\omega}(\Phi)$; we have already shown that if $\Phi$ is $\aleph_0$-categorical, $I_{\infty\omega}(\Phi)=1$. In case there is a proper class of such models, we say $I_{\infty\omega}(\Phi)=\infty$.
	
	Following \cite{LaskowskiShelahAleph0Stable}, we can also generalize Borel reductions to uncountable cardinals.  To do so, for any infinite cardinal $\lambda$ and any $\Phi\in L_{\lambda^+\omega}$, let $\Mod_\lambda(\Phi)$ be the space of $L$-structures with universe $\lambda$ which model $\Phi$.  We make this a topological space using atomic formulas to form a subbasis, as with $\Mod_\omega(\Phi)$. A function $f:\Mod_\lambda(\Phi)\to \Mod_\lambda(\Psi)$ is said to be $\lambda$-Borel if the preimage of any subbasic open set is $\lambda$-Borel, meaning it can be formed as a usual Borel set, but with conjunctions and disjunctions of size at most $\lambda$.  Because of the presence of parameters from $\lambda$, it can easily be seen that the $\lambda$-Borel subsets of $\Mod_\lambda(\Phi)$ are precisely (infinite) Boolean combinations of subbasic open sets, so there is no incongruity with \cite{LaskowskiShelahAleph0Stable}.
	
	A $\lambda$-Borel function $f:\Mod_\lambda(\Phi)\to\Mod_\lambda(\Psi)$ is a \emph{$\lambda$-Borel reduction} when for all $M,N\in\Mod_\lambda(\Phi)$, $M\equiv_{\infty\omega}N$ if and only if $f(M)\equiv_{\infty\omega}f(N)$.  We denote the existence of such a function by saying $(\Mod_\lambda(\Phi),\equiv_{\infty\omega})\borelleq (\Mod_\lambda(\Psi),\equiv_{\infty\omega})$, often shortened to $\Phi\borelleq^\lambda \Psi$.  We say $\Phi$ is \emph{$\lambda$-Borel complete} if, for all $\Psi\in L_{\lambda^+\omega}$, $\Psi\borelleq^\lambda \Phi$.  Observe that in the case that $\lambda=\aleph_0$, we recover the original notion of Borel reductions, Borel completeness, and so on, since back-and-forth equivalence is the same as isomorphism for countable structures; thus examples exist in that case.  But actually such sentences exist for all $\lambda$:
	
	\begin{theorem}[Laskowski, Shelah]\label{SubtreesComplexTheorem}
		For any infinite cardinal $\lambda$, the class of (downward closed) subtrees of $\lambda^{<\omega}$ is $\lambda$-Borel complete.
	\end{theorem}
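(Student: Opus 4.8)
The plan is to take the classical theorem that the class of downward-closed subtrees of $\omega^{<\omega}$ is Borel complete and scale its proof up coordinate by coordinate, and then to use the forcing-collapse characterisation of $\equiv_{\infty\omega}$ recalled in the discussion preceding Theorem~\ref{SubtreesComplexTheorem} to upgrade ``Borel reduction for $\iso$'' into ``$\lambda$-Borel reduction for $\equiv_{\infty\omega}$''. Fix an infinite cardinal $\lambda$ and an arbitrary $\Psi\in L_{\lambda^+\omega}$ in a language of size $\le\lambda$; write $\s{Tr}_\lambda$ for the natural presentation of the class of downward-closed subtrees of $\lambda^{<\omega}$. We must construct an absolutely-defined $\lambda$-Borel map $f\colon\Mod_\lambda(\Psi)\to\Mod_\lambda(\s{Tr}_\lambda)$ with $M\equiv_{\infty\omega}N\iff f(M)\equiv_{\infty\omega}f(N)$, i.e. $\Psi\borelleq^\lambda\s{Tr}_\lambda$.

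First I would assemble $f$ as the composition of the evident $\lambda$-analogues of the three standard steps in the Friedman--Stanley argument: (a) replace function and constant symbols by their graphs, reducing to the case that the language $L'$ is relational of size $\le\lambda$; (b) code $L'$-structures on $\lambda$ by symmetric irreflexive graphs on $\lambda$, attaching to each relation symbol and each tuple on which it holds a rigid finite ``gadget'' recording the symbol and the positions of the coordinates and marking the original elements --- there are at most $\lambda$ gadgets, so they fit on the vertex set $\lambda$; (c) code graphs on $\lambda$ by subtrees of $\lambda^{<\omega}$ via the explicit Friedman--Stanley tree-coding, whose output is a tree of height $\le\omega$ all of whose levels have size $\le\lambda$, hence --- after a fixed choice-free labelling of the children of each node by an initial segment of $\lambda$ read off from data already present in the graph --- a downward-closed subtree of $\lambda^{<\omega}$. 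Each of (a), (b), (c) is given by an explicit recipe which, on each atomic coordinate of its output, is a Boolean combination of size $\le\lambda$ of atomic facts about its input; hence each map, and the composite $f$, is $\lambda$-Borel. Crucially, each recipe is \emph{absolute}: it is literally the classical recipe with $\omega$ replaced by $\lambda$ throughout all index sets, so that its restriction to countable inputs, computed inside \emph{any} transitive model of $\ZFC$, is precisely the classical Friedman--Stanley Borel reduction.

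It then remains only to see that $f$ reduces $\equiv_{\infty\omega}$. Here I would pass to a forcing extension $\b V[G]$ collapsing $\lambda$ to $\aleph_0$: for structures of size $\le\lambda$ the relation $A\equiv_{\infty\omega}B$ is absolute and is equivalent to $A\iso B$ holding in $\b V[G]$. In $\b V[G]$ the language of $\Psi$ is countable, $M$, $N$, $f(M)$, $f(N)$ are all countable structures, and by absoluteness of the defining recipe the map $f$ computed in $\b V[G]$ is exactly the classical Friedman--Stanley reduction, which by the classical theorem (valid in $\b V[G]\models\ZFC$) satisfies $M\iso N\iff f(M)\iso f(N)$. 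Chaining the equivalences $M\equiv_{\infty\omega}N$ iff $M\iso N$ in $\b V[G]$ iff $f(M)\iso f(N)$ in $\b V[G]$ iff $f(M)\equiv_{\infty\omega}f(N)$ completes the proof.

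The main obstacle --- and really the only place care is needed, the classical Borel completeness of $\omega^{<\omega}$-trees being used as a black box through its explicit construction --- is the bookkeeping that makes the previous paragraph legitimate: one must set up the codings (a)--(c) so that they are ``the same'' construction as their countable counterparts in the precise sense that, inside any transitive model of $\ZFC$, their restriction to countable inputs \emph{is} the Friedman--Stanley map, which is what licenses transporting the reduction property through the collapse. Two secondary combinatorial points deserve attention: carrying out step (b) uniformly for a relational language of size $\lambda$ rather than $\omega$ (if $\lambda$ is uncountable one cannot encode an index $\alpha<\lambda$ into the isomorphism type of a single finite gadget, so one instead routes the $\lambda$-many symbol names through an auxiliary rigid ``index'' subgraph on $\lambda$, of which many exist); and presenting the abstract tree produced by (c) as a downward-closed subset of $\lambda^{<\omega}$ without appealing to choice. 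Both are routine.
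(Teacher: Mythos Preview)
The paper does not prove Theorem~\ref{SubtreesComplexTheorem}: it is stated with attribution to Laskowski and Shelah, cited from \cite{LaskowskiShelahAleph0Stable}, and then used as a black box (in particular as the seed for the proof of Theorem~\ref{LinearOrdersBorelCompleteTheorem}). There is therefore no proof in the paper against which to compare your argument.

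That said, your strategy is precisely the paradigm the paper itself articulates and employs in the proof of Theorem~\ref{LinearOrdersBorelCompleteTheorem}: take an explicit, size-preserving classical Borel reduction, observe that the recipe is absolute, and verify that it is a $\lambda$-Borel reduction of $\equiv_{\infty\omega}$ by passing to a forcing extension collapsing $\lambda$ and invoking the countable theorem there. Your outline is a reasonable instantiation of this for the Friedman--Stanley chain $L$-structures $\to$ graphs $\to$ subtrees. The issue you flag in step~(b) --- that with $\lambda$ relation symbols one cannot encode a symbol index $\alpha<\lambda$ in the isomorphism type of a finite gadget --- is genuine and does require a workaround of the sort you indicate; and your insistence on using the \emph{explicit construction} rather than the black-box statement is exactly right, since absoluteness of the recipe is what makes the collapse argument legitimate. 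Whether this matches the actual Laskowski--Shelah argument in \cite{LaskowskiShelahAleph0Stable} you would have to check there, but nothing in the present paper conflicts with your approach.
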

	
	To make this completely precise, we fix a bijection $\lambda^{<\omega}\to \lambda$ so that $\lambda$ has a tree structure on it.  Then a ``subtree of $\lambda^{<\omega}$'' is formed by expanding this structure by a unary predicate whose realizations are downward-closed with regard to the tree order, and outside of which we forget the tree order, along with some standard tricks so that the complement of the ``subtree'' is always infinite, and thus irrelevant to the back-and-forth equivalence structure.  In \cite{LaskowskiShelahAleph0Stable}, Laskowski and Shelah introduce the notion of ``$\lambda$-Borel complete for all $\lambda$'' as a kind of maximal level of complexity of a theory, and using Theorem~\ref{SubtreesComplexTheorem} as a ``test class,'' they also produce a large class of examples.  For our purposes we will need a different test class:
	
	\begin{theorem}\label{LinearOrdersBorelCompleteTheorem}
		Let $\LO$ be the sentence ``$<$ is a linear order'' in the language $\{<\}$.  Then $\LO$ is $\lambda$-Borel complete for all $\lambda$.  In particular, for all infinite $\lambda$, there are $2^\lambda$ pairwise back-and-forth inequivalent linear orders of size $\lambda$, so $I_{\infty\omega}(\LO)=\infty$.
	\end{theorem}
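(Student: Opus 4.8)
The plan is to prove $\lambda$-Borel completeness by constructing, for every infinite $\lambda$, a $\lambda$-Borel reduction from the class of subtrees of $\lambda^{<\omega}$ -- which is $\lambda$-Borel complete by Theorem~\ref{SubtreesComplexTheorem} -- into $\Mod_\lambda(\LO)$. Given a downward-closed $T\subseteq\lambda^{<\omega}$ and a node $s\in T$, let $T_s=\{t:s^\frown t\in T\}$ be the subtree of $\lambda^{<\omega}$ hanging below $s$, and define a linear order $L(T)=L(T_{\langle\rangle})$ by the recursion
\[
L(T_s)\;=\;A_{|s|}\;+\;\Big(\textstyle\sum_{\,i<\lambda,\ s^\frown\langle i\rangle\in T}\,\big(B_{|s|}+L(T_{s^\frown\langle i\rangle})\big)\Big)\;+\;C_{|s|},
\]
the inner sum being taken in the order induced by $i<\lambda$, where $A_n,B_n,C_n$ are fixed countable scattered ``marker'' linear orders, each with a least and a greatest element, chosen pairwise non-back-and-forth-equivalent and, more importantly, designed so that an occurrence of such a marker as a convex block inside a linear order is $L_{\infty\omega}$-detectable and cannot be absorbed into an adjacent summand (for instance, build $A_n,B_n,C_n$ from carefully stacked copies of $\b Z$ so that their iterated Hausdorff condensations single them out). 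The recursion bottoms out since $|s|$ is finite and strictly increasing; a leaf $s$ contributes only $A_{|s|}+C_{|s|}$.

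Two things are then routine. First, $T\mapsto L(T)$ is $\lambda$-Borel: fixing a uniform indexing by $\lambda$ of the ``potential universe'' of $L(T)$, the actual universe is a simple $\lambda$-Borel subset, and whether $x<_{L(T)}y$ holds depends only on finitely many atomic facts of the input (the ambient tree order, which is fixed, together with ``$s'\in T$'' for the finitely many prefixes of the nodes coding $x$ and $y$). Second, $T\equiv_{\infty\omega}T'$ implies $L(T)\equiv_{\infty\omega}L(T')$: the operation $\sum$ is functorial for back-and-forth systems (compare Propositions~\ref{BasicSumProp} and \ref{GeneralShuffleProposition}), and $L(-)$ is assembled from $\sum$ and the fixed gadgets by a recursion of depth $\omega$, so a winning strategy for the second player in the game on $T$ versus $T'$ lifts, level by level, to one in the game on $L(T)$ versus $L(T')$; one makes this precise by induction on the length of the game.

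The hard part will be the converse -- showing $L$ is faithful for $\equiv_{\infty\omega}$, not merely for $\iso$. What is needed is that from any linear order $M$ with $M\equiv_{\infty\omega}L(T)$ one can recover a tree that is $\equiv_{\infty\omega}T$; equivalently, that $L(T)\equiv_{\infty\omega}L(T')\Rightarrow T\equiv_{\infty\omega}T'$. The strategy is to show that the nested block structure of $L(T)$ is visible in $L_{\infty\omega}$, hence respected by any back-and-forth system: using the iterated finite (archimedean) condensation, which is $\equiv_{\infty\omega}$-invariant, one detects and strips the outermost markers $A_0$ and $C_0$, exposing the level-$1$ child blocks delimited by the $B_0$-markers; each such block is itself of the form $L(T_s)$ with $|s|=1$, and one recurses, terminating because levels are finite, thereby reconstructing $T$ up to $\equiv_{\infty\omega}$. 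The delicate point, and the main obstacle, is exactly the gadget design that makes the parse of each $L(T_s)$ into ``root markers $+$ ordered list of child blocks'' canonical: the markers must never be spuriously created, merged, or destroyed under back-and-forth equivalence or by adjacency to a sum. Rubin's relativization, Lemma~\ref{RelativizationLemma}, is the natural tool here, since each marker block and each child block can be exhibited as a convex subset cut out by an $L_{\infty\omega}$-formula, hence preserved by any back-and-forth system with its induced structure determined.

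Granting the converse, $L$ is a $\lambda$-Borel reduction, so Theorem~\ref{SubtreesComplexTheorem} yields that $\LO$ is $\lambda$-Borel complete. The closing assertions are then formal: a $\lambda$-Borel reduction is injective on $\equiv_{\infty\omega}$-classes, so the $\equiv_{\infty\omega}$-classes among linear orders of size $\lambda$ are at least as numerous as those among subtrees of $\lambda^{<\omega}$, of which there are $2^\lambda$ (already witnessed inside the test class of Theorem~\ref{SubtreesComplexTheorem}); since there are only $2^\lambda$ linear orders with universe $\lambda$, the count is exactly $2^\lambda$, and letting $\lambda$ range over all infinite cardinals gives a proper class of $\equiv_{\infty\omega}$-classes, i.e.\ $I_{\infty\omega}(\LO)=\infty$.
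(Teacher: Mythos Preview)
Your proposal has a genuine gap, and you name it yourself: the converse direction, that $L(T)\equiv_{\infty\omega}L(T')\Rightarrow T\equiv_{\infty\omega}T'$, is only sketched, with the phrase ``granting the converse'' appearing explicitly. The gadget design you gesture at---building $A_n,B_n,C_n$ so that iterated Hausdorff condensation picks them out and so that they are never spuriously created or merged inside an arbitrary $\equiv_{\infty\omega}$-equivalent order---is exactly where all the work lives, and you have not done it. A few side issues compound this: your recursion does \emph{not} bottom out, since subtrees of $\lambda^{<\omega}$ typically have infinite branches and hence no leaves (the object $L(T)$ can still be made sense of directly as an ordered union of markers indexed by nodes of $T$, but not by the terminating recursion you describe); and Lemma~\ref{RelativizationLemma} concerns first-order formulas, not $L_{\infty\omega}$, so it does not obviously help with recovering convex blocks up to back-and-forth equivalence.

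The paper's proof sidesteps the gadget problem entirely with a device you are missing: absoluteness of $\equiv_{\infty\omega}$. Rather than reducing from subtrees, the paper follows Friedman--Stanley's original construction (coding $L$-structures for a finite $L$ into linear orders via a $\lambda$-dense order $I$ and the iterated product $I_{<\omega}$), and then, to check that the map is a reduction for $\equiv_{\infty\omega}$, passes to a forcing extension $\b V[G]$ in which $\lambda$ is countable. In $\b V[G]$, back-and-forth equivalence coincides with isomorphism for the (now countable) structures involved, and the construction collapses on the nose to the classical Friedman--Stanley reduction, which is already known to preserve and reflect isomorphism. No new gadget analysis is needed. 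This is the key idea you should take away: once a construction is uniform in $\lambda$ and size-preserving, faithfulness for $\equiv_{\infty\omega}$ can be borrowed from faithfulness for $\cong$ in the countable case via a collapse. The paper also derives the $2^\lambda$ count independently and elementarily (ordinals in $[\lambda,\lambda^+)$ are pairwise $\equiv_{\infty\omega}$-inequivalent, then bundle subsets of them via an equivalence relation), rather than deducing it from $\lambda$-Borel completeness.
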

	\begin{proof}
		The ``in particular'' is a corollary, as follows.  Trivially there are at most $2^\lambda$ orders of size $\lambda$, up to back-and-forth equivalence (or isomorphism).  For the other direction, it is a classical result that distinct ordinals are back-and-forth inequivalent; therefore, there are at least $\lambda^+$ linear orders of size $\lambda$, indexed by the interval $[\lambda,\lambda^+)$.  So consider the language $\{E,<\}$, and the incomplete theory which states that $E$ is an equivalence relation and $<$ is a linear order on each class (but not well-defined between classes).  Then for any $X\subset [\lambda,\lambda^+)$ of size at most $\lambda$, we can make $M_X$ which has one $E$-class isomorphic to $(x,<)$ for each $x\in X$.  If $X\not=Y$ then $M_X\not\equiv_{\infty\omega}M_Y$, so there are at least $[\lambda^+]^{\leq\lambda}= 2^\lambda$ inequivalent linear orders of size $\lambda$.  This holds for all $\lambda$, proving that $I_{\infty\omega}(\LO)=\infty$.
		
		The main result is a modification of Friedman and Stanley's proof that linear orders are Borel complete, with the following paradigm.  Informally, if a Borel reduction makes sense for all infinite cardinalities and preserves size, the same construction will yield a $\lambda$-Borel reduction for all $\lambda$.
		
		Let $\lambda$ be any infinite cardinal. It follows from Theorem~\ref{SubtreesComplexTheorem} that there is a finite language $L$ where $\Mod_\lambda(L)$ -- the space of $L$-structures with universe $\lambda$ -- is $\lambda$-Borel complete.  To imitate the original proof we need a notion of a \emph{$\lambda$-dense linear order}: a structure of size $\lambda$ in the language $\{<\}\cup\{P_\alpha:\alpha\in\lambda\}$ where $<$ is a dense linear order without endpoints, the $P_i$ are disjoint unary predicates, and they are dense, codense, and exhaustive in the order.  Note that this may be weaker with some existing definitions, but this is all we will need.
		
		With the exception of exhaustion (that is, that every element is in some $P_\alpha$), this is a collection of first-order axioms.  Every finite subset of these has a countable model; just take a copy of $(\b Q,<)$ with $n$ dense, codense subsets specified.  So the whole theory has a model of size $\lambda$, and if we drop the unsorted elements, it's still a model and is a $\lambda$-dense order.  Fix some particular $\lambda$-dense linear order $I=(I,<,P_\alpha)_{\alpha\in\lambda}$.
		
		We now follow the proof of Theorem~3 from \cite{FriedmanStanleyBC} quite closely, even matching notation as much as possible.  We need to define the linear order $I_{<\omega}$ as a directed union $\bigcup_n I_n$.  We say $I_{-1}$ is a singleton.  For each $n\in\omega$, we say $I_n$ is $I_{n-1}\times (\{-\infty\}+ I)$ with the lexicographic ordering on the product and the sum. Here we identify $I_{n-1}$ with $I_{n-1}\times \{-\infty\}$ inside $I_n$.  For any $x\in I_{<\omega}$, define $\ell(x)$ as the least $n$ where $x\in I_n$.
		
		We give a labeling $f$ of $I_{<\omega}$ by $\lambda^{<\omega}$ satisfying the following conditions:
		
		\begin{itemize}
			\item If $\ell(x)=n$, then $f(x)\in \lambda^n$.
			\item If $x\in I_n$, then $f$ maps $\{x\}\times I$ \emph{onto} $\{f(x)\frown \alpha:\alpha\in\lambda\}$.
			\item For any $x\in I_n$ and any $\alpha\in\lambda$, $f^{-1}(\{f(x)\frown \alpha:\alpha\in\lambda\}$ is dense in $\{x\}\times I$.
		\end{itemize}
		
		In both of the above, as well as what follows, $\cap$ refers to concatenation of sequences.
		
		We define $f$ by induction.  If $\ell(x)=0$, $f(x)$ is the empty sequence $()\in\lambda^0$.  If $\ell(x)=n+1$, then $x=(y,i)$ for some $y\in I_n$ and some $i\in I$, and there is a unique $\alpha\in\lambda$ where $I\models P_\alpha(i)$.  So let $f(x)=f(y)\frown (\alpha)$.  Visibly this function has the desired properties, using $\lambda$-density of $I$.
		
		Next, for each $n\in\omega$, let $\TY_n$ be the set of all complete atomic $L$-types in variables $x_1,\ldots,x_n$; since $L$ is finite, so is $\TY_n$.  Let $e(0)=0$, and for each $n\in\omega$, let $e(n+1)=e(n)+|\TY_n|$. Let $\TY=\bigcup_n\TY_n$. We fix some bijection $k:\TY\to\omega$, so that if $p\in \TY_n$, then $e(n)\leq k(p)<e(n+1)$.
		
		We can now finally produce our $\lambda$-Borel reduction.  Let $A$ be an $L$-structure with universe $\lambda$, where $L$ is the fixed finite language from before. We construct a linear order $M_A$ with universe $\lambda$ in a $\lambda$-Borel way, such that for any $L$-structures $A$ and $B$ on $\lambda$, $A\equiv_{\infty\omega}B$ if and only if $M_A\equiv_{\infty\omega}M_B$.  We construct $M_A$ from $A$ by expanding $I_{<\omega}$ according to $A$.
		
		So for any $x\in I_{<\omega}$ with $\ell(x)=n$, there is a corresponding tuple $f(x)\in\lambda^n$, and this tuple has an atomic type $\otp^A(f(x))$, which has a corresponding index $k(\otp^A(f(x)))$.  So let $J_x$ be the linear order $\b Q+ 2+k(\otp^A(f(x)))+\b Q$; this is a dense piece, followed by a long enough finite piece not to disappear but which uniquely captures the type of $f(x)$, followed by a dense piece to separate this information from others.  So let $M_A$ be the sum $\sum_x J_x$.  The map $A\mapsto M_A$ can easily be made a $\lambda$-Borel function from $\Mod_\lambda(L)$ to $\Mod_\lambda(\LO)$; the detail to check is that each $J_x$ is countable and $I_{<\omega}$ is a fixed set of size $\lambda$, so $|\sum_x J_x|$ can be put into (more or less) canonical bijection with $\lambda$.
		
		To show it is a reduction, let $\b V[G]$ be a forcing extension in which $\lambda$ is countable (e.g. a Levy collapse of $\lambda^+$ to $\omega_1$ will do).  Observe that $A\equiv_{\infty\omega}B$ if and only if $A\iso B$ in $\b V[G]$, and likewise with $M_A$ and $M_B$.  So pass to $\b V[G]$.  Once there, observe that $I$ is isomorphic to any $\aleph_0$-dense partition of $(\b Q,<)$, and $A$ and $B$ are (up to isomorphism) just elements of $\Mod_\omega(L)$.  Therefore, this collapses to the exact construction showing $\Mod_\omega(L)\borelleq \Mod_\omega(\LO)$ from \cite{FriedmanStanleyBC}, so $A\iso B$ (in $\b V[G]$) if and only if $M_A\iso M_B$ (in $\b V[G]$).  This completes the proof.
	\end{proof}
	
	It is not clear that there should be a strong connection between $I_{\infty\omega}(\Phi)$ and the complexity of isomorphism for countable models of $\Phi$.  For example, in \cite{UlrichLaskowskiRast}, the authors demonstrate examples of a Borel complete theory $T$ with $I_{\infty\omega}(T)=\beth_2$.  It is therefore remarkable that \emph{in all of the cases in this paper}, they line up exactly.  That is, if $T$ is a CLO and $T\boreleq \iso_1$ (informally, real numbers), then $I_{\infty\omega}(T)=\beth_1$.  If $T\boreleq \iso_2$ (informally, countable sets of reals), then $I_{\infty\omega}(T)=\beth_2$.  And if $T$ is Borel complete, then $T$ is $\lambda$-Borel complete for all infinite $\lambda$, so in particular $I_{\infty\omega}(T)=\infty$.

	\section{Self-Additive Linear Orders}\label{SelfAdditiveSection}
	
	The crux of the proof is a clever definition due to Rubin -- the notion of self-additivity.  We summarize the basic properties of self-additive orders, following from Theorem~3.2 and Lemma~3.4 in \cite{RubinTheoriesOfLinearOrder}:
	
	\begin{defthm}
		Let $A$ be a CLO with more than one point.  The following properties are equivalent:
		
		\begin{itemize}
			\item The only $\emptyset$-definable convex subsets of $A$ are $\emptyset$ and $A$.
			\item If $(J,\leq )$ is a linear order and $A_j\equiv A$ for all $j\in (J,\leq)$, then each embedding $A_j\to \sum_{j\in J}A_j$ is elementary.  In particular, $\sum_j A_j \equiv A$.
		\end{itemize}
		
		If $A$ satisfies any of these properties, call $A$ \emph{self-additive}.
	\end{defthm}
	
	For example, each of $(\b Z,\leq)$, $(\b Q,<)$ are self-additive.  Indeed $(\b R,<,\b Q)$, which is the real order with a predicate marking whether a number is rational, is self-additive. However, neither $(\b N,<)$ nor $(\b Z+1+\b Z,<)$ is.  Self-additive structures are extremely useful for us because we can easily construct models using the sum operation.  They also have another important property, namely, a nice condensation on the models.
	
	\begin{definition}
		Let $A$ be a self-additive CLO, and let $a$ and $b$ be from $A$.  Say $a\sim b$ if there is a formula $\phi(x,a)$ where $\phi(A,a)$ is convex, bounded, and contains both $a$ and $b$.
	\end{definition}
	
	Note that we consider a set bounded if there are elements strictly above and strictly below the entire set.  Since self-additive orders cannot have first or last elements, this is equivalent to any other reasonable definition.  It is a theorem of Rubin in \cite{RubinTheoriesOfLinearOrder} that $\sim$ is a equivalence relation on $I$ with convex classes, although this is spelled out more plainly in Theorem~13.99 of \cite{RosensteinLinearOrderings}.  The reader is cautioned that neither symmetry nor transitivity is simple to verify, but we do not reproduce the details here.
	
	The following is the main way we will show complexity of CLOs:
	
	\begin{lemma}\label{ComplexityDriverLemma}
		Suppose $A$ is a self-additive CLO, $T=\th(A)$, and $p\in S_1(T)$ such that there is only one $\sim$-class in $A$ containing a realization of $p$.  Then $T$ is $\lambda$-Borel complete for all $\lambda$.
	\end{lemma}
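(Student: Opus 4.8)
The plan is to reduce the class $\LO$ of all linear orders---which is $\lambda$-Borel complete for every infinite $\lambda$ by Theorem~\ref{LinearOrdersBorelCompleteTheorem}---to the class of size-$\lambda$ models of $T$, via the map sending a linear order $J$ to the sum $M_J := \sum_{j\in J} A_j$ with each $A_j\cong A$. First I would reduce to the case that $A$ is countable: self-additivity is a property of $T$ (each ``$\phi(x)$ defines a nonempty proper convex set'' is first-order), and $p\in S_1(T)$ is about $T$; moreover a short argument with Lemma~\ref{RelativizationLemma} shows that for a convex substructure the relation $\sim$ is just the restriction of the ambient $\sim$, so any countable elementary $A_0\prec A$ is still self-additive with exactly one $\sim$-class meeting $p$, and we may replace $A$ by $A_0$. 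Then $|M_J| = |J|\cdot\aleph_0 = \lambda$, the map $J\mapsto M_J$ is $\lambda$-Borel (it concatenates $|J|$ fixed countable blocks, exactly as in the proof of Theorem~\ref{LinearOrdersBorelCompleteTheorem}), and $M_J\models T$ since $A$ is self-additive. The ``forward'' half, $J\equiv_{\infty\omega}J'\Rightarrow M_J\equiv_{\infty\omega}M_{J'}$, follows by passing to a forcing extension collapsing everything to $\aleph_0$, where $J\cong J'$ and hence $M_J\cong M_{J'}$ by Proposition~\ref{BasicSumProp}(1) (after transporting along the isomorphism $J\cong J'$), and then using that $\equiv_{\infty\omega}$ is absolute.

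For the ``backward'' half I would recover $J$ from $M_J$ by an absolutely-definable invariant. Since the language is countable, ``$x$ realizes $p$'' is an $L_{\omega_1\omega}$ formula ($\bigwedge p$), and ``$x\sim y$'' is a countable disjunction, over formulas $\phi(x,y)$, of the first-order statement ``$\phi(\cdot,x)$ defines a bounded convex set containing $x$, and $\phi(y,x)$ holds.'' Hence the linear order $E_p(N) := \{\,[c]_\sim : c\in N,\ N\models p(c)\,\}$ ordered by $<$ is obtained from $N$ by an $L_{\omega_1\omega}$-interpretation, so $N\equiv_{\infty\omega}N'\Rightarrow E_p(N)\equiv_{\infty\omega}E_p(N')$ (again by collapsing to $\aleph_0$, where the interpretation is isomorphism-invariant). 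It therefore suffices to show $E_p(M_J)\cong J$. Since each $A_j\prec M_J$, the realizations of $p$ in $M_J$ are the disjoint union over $j$ of the realizations of $p$ in $A_j$; by hypothesis the latter all lie in a single $\sim^{A_j}$-class $C_j$; and a formula witnessing $x\sim^{A_j}y$ still witnesses $x\sim^{M_J}y$ (convexity and boundedness of $\phi(\cdot,x)$ are first-order and preserved by $A_j\prec M_J$), so all realizations of $p$ in $A_j$ lie in a single $\sim^{M_J}$-class. Granting that distinct summands give distinct $\sim^{M_J}$-classes, and that $c\in A_j,\ c'\in A_{j'},\ j<j'$ forces $c<c'$ in $M_J$, the assignment $j\mapsto[\text{any }p\text{-realization of }A_j]_{\sim^{M_J}}$ is the desired order-isomorphism of $J$ onto $E_p(M_J)$.

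The crux---and the step I expect to be the main obstacle---is exactly that claim: realizations of $p$ from different summands of $M_J$ are $\sim^{M_J}$-inequivalent, i.e. $[a]_{\sim^{M_J}}\subseteq A_j$ for $a\in A_j$. Suppose $a\in A_j$, $b\in A_{j'}$, $j<j'$, and $\phi(M_J,a)$ is convex, bounded, and contains $a$ and $b$. For each $l\ne j$ the slice $\phi(M_J,a)\cap A_l$ is convex, and since the parameter $a$ lies outside $A_l$, Lemma~\ref{RelativizationLemma} makes it $\emptyset$-definable in $A_l\cong A$; self-additivity then forces it to be $\emptyset$ or all of $A_l$. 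Hence $\phi(M_J,a)$ cannot meet $A_{j'}$ in a proper nonempty piece, so $\phi(M_J,a)\supseteq A_{j'}$; being bounded above while $A_{j'}$ has no maximum, its upper bound lies in a later copy $A_{j''}$, where the relativization argument yields a proper $\emptyset$-definable convex subset of $A_{j''}$, contradicting self-additivity---unless $\phi(M_J,a)$ misses $A_{j''}$ entirely. In that residual case $\phi(M_J,a)$ is a (possibly improper) convex part of $A_j$ together with a block of full copies, and one must rule this out by showing there is no ``copy-membership'' formula over a single parameter in a sum of self-additive pieces. I expect this to come either from a preliminary lemma---that the condensation of $\sum_j A_j$ has exactly the $\sim$-classes of the summands---proved by the relativization technique as in Rubin, or from iterating the relativization step along the index order; this is where the genuine work sits.

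Finally, combining the two halves shows $J\mapsto M_J$ is a $\lambda$-Borel reduction of $\LO$ to $T$; since $\LO$ is $\lambda$-Borel complete by Theorem~\ref{LinearOrdersBorelCompleteTheorem} and $\lambda$ was an arbitrary infinite cardinal, $T$ is $\lambda$-Borel complete for all $\lambda$.
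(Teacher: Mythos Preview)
Your overall architecture matches the paper's proof exactly: reduce to countable $A$, send a linear order $J$ to $M_J=\sum_{j\in J}A$, pass to a forcing extension collapsing $\lambda$ so that $\equiv_{\infty\omega}$ becomes $\cong$, and recover $J$ as the order of $\sim$-classes meeting $p$. The forward direction and the fact that realizations of $p$ inside a fixed $A_j$ stay $\sim$-equivalent in $M_J$ are handled just as in the paper.

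The point where you diverge---and where you correctly flag a gap---is the claim that $a\in A_j$, $b\in A_{j'}$, $j\neq j'$ implies $a\not\sim b$ in $M_J$. You attack this via Lemma~\ref{RelativizationLemma}, slicing $\phi(M_J,a)$ summand by summand, and you end up unable to exclude the ``residual case'' in which $\phi(M_J,a)$ is a convex piece of $A_j$ together with a block of entire neighboring copies. The paper avoids this case analysis altogether by using elementarity directly rather than relativization. Self-additivity gives $A_j\prec M_J$. The sentence ``$\phi(\cdot,a)$ is bounded,'' namely
\[
\exists c\,\exists d\ \forall x\,\bigl(\phi(x,a)\rightarrow c<x<d\bigr),
\]
is first-order in the single parameter $a\in A_j$, and it holds in $M_J$; by elementarity it holds in $A_j$, so there are $c,d\in A_j$ with $A_j\models\forall x\,(\phi(x,a)\rightarrow c<x<d)$. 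Pushing this universal statement back up to $M_J$ (again by $A_j\prec M_J$) shows that $c,d\in A_j$ already bound $\phi(M_J,a)$ inside $M_J$. Since $A_j$ is convex in $M_J$, the interval $(c,d)$ computed in $M_J$ lies entirely in $A_j$, so $\phi(M_J,a)\subseteq A_j$ and in particular $b\in A_j$. This one-line use of $A_j\prec M_J$ replaces your entire residual-case analysis; once you have it, the rest of your write-up is correct and essentially identical to the paper's.
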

	\begin{proof}
		Let $A_0\prec A$ be countable and contain a realization of $p$.  If $a,b\in A_0$, then $a\sim b$ in $A_0$ if and only if $a\sim b$ in $A$, so $A_0$ still satisfies the hypotheses of the theorem.  This is to say, we may assume $A$ is countable, and in fact that $A$ has universe $\omega$.  Fix an infinite cardinal $\lambda$ and a canonical bijection $\lambda\times\omega\to\lambda$.  We aim to show that $(\LO_\lambda,\equiv_{\infty\omega})\borelleq^\lambda(\Mod_\lambda(T),\equiv_{\infty\omega})$; by Theorem~\ref{LinearOrdersBorelCompleteTheorem}, this shows that $T$ is $\lambda$-Borel complete.
		
		For any linear order $(I,<)$ with universe $\lambda$, let $A_I=I\times A=\sum_{i\in I}A$, which has universe $\lambda\times\omega$ since $A$ has universe $\omega$. Under the bijection we may assume $A_I$ has universe $\lambda$.  Let $(J,<)$ be another linear order with universe $\lambda$, and let $A_J$ be formed in the same manner as $A_I$.  Let $\b V[G]$ be a forcing extension in which $\lambda$ is countable, so that $I$, $J$, $A_I$, and $A_J$ are all countable in $\b V[G]$.  Then $(I,<)\equiv_{\infty\omega}(J,<)$ if and only if $(I,<)\iso (J,<)$ in $\b V[G]$, and likewise with $A_I$ and $A_J$.  This is all to say we may work solely in the countable case, with isomorphism.
		
		Observe that for any $a\in A_i$, if $\phi(x,a)$ is convex, bounded, and contains $a$ in $A_I$, then it is bounded in $A_i$ as well, and these bounds still apply in $A_I$ -- all this follows from self-additivity, which enforces $A_i\prec A_I$.  Therefore, if $a,b\in A_I$ and $a\sim b$, then $a$ and $b$ are in the same summand.
		
		Now clearly if $I\iso J$, then $A_I\iso A_J$.  On the other hand, consider the set of $\sim$-classes $E_I=\{a\class :A_I\models p(a)\}$.  These are naturally ordered by $<$, and if $a\sim b$ in $A_I$, then they come from the same summand $A_i$, and are equivalent in $A_I$ if and only if they're equivalent in that $A_i$.  Since each $A_i$ contains exactly one $\sim$-class containing a realization of $p$, $E_I$ has order type $(I,<)$.  Clearly if $A_I\iso A_J$, then $(E_I,<)\iso (E_J,<)$, so $I\iso J$, completing the proof.
	\end{proof}
	
	For example, this shows that $\th(\b Z,<)$ is $\lambda$-Borel complete for all $\lambda$, since $(\b Z,<)$ has a unique $\sim$-class. But it can be used much more generally than that.  We borrow Lemma~6.1 of \cite{RubinTheoriesOfLinearOrder}:
	
	\begin{lemma}[Rubin]\label{Rubin 6.1}
		Let $A\equiv B$ be self-additive, $T=\th(A)$.  Let $b\in B$ be arbitrary.  Then the canonical embedding from $A+(b\class)+A$ to $A+B+A$ is elementary.
	\end{lemma}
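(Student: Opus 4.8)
The plan is to dispose of the two outer copies of $A$ using self-additivity, and to reduce the ``collapsing'' of $B$ onto $b\class$ to a single one-sided fact about $\sim$-classes. Write $K := b\class$, which is convex in $B$ by the theorem of Rubin quoted above, and set $D := A+B+A$ and $C := A+K+A$; the canonical embedding $\iota\colon C\to D$ is just the inclusion, once we identify the outer copies of $A$ in $C$ and $D$ and identify $K$ with the corresponding convex block of the middle summand of $D$. Since $B$ is convex in $D$, so is $K$, and writing $B=B_1+K+B_2$ (with $B_1$ or $B_2$ possibly empty) gives $D=A+B_1+K+B_2+A$. The outer copies of $A$ now cost nothing: as $A\equiv B$, every summand of $D$ is elementarily equivalent to $A$, so by self-additivity $D\models T$ and each summand is an elementary substructure of $D$; moreover $A+B\equiv A+A\equiv A$ by Proposition~\ref{BasicSumProp}(2) and self-additivity, so the additivity clause in the definition of self-additivity, applied to the groupings $D=(A+B)+A$ and $D=A+(B+A)$, makes $A+B$ and $B+A$ elementary in $D$ as well. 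What remains is to show that $\iota$ itself is elementary (which will also give $C\models T$).

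I would reduce this to the one-sided equivalences
\[B_1\equiv B\quad\text{and}\quad B_2\equiv B,\]
i.e.\ the statement that cutting the self-additive order $B$ at either boundary of one of its $\sim$-classes leaves a piece with the same complete theory. Granting this, $B_1\equiv A$ and $B_2\equiv A$, so applying the additivity clause of self-additivity to the two-block orders $A+B_1$ (summands $A$ and $B_1$, both $\equiv A$) and $B_2+A$ shows that the canonical inclusions $A\hookrightarrow A+B_1$ onto the initial copy of $A$, and $A\hookrightarrow B_2+A$ onto the final copy, are both elementary. Now run the back-and-forth game between $C$ and $D$: reduce as usual to a finite language, fix a finite length $n$ and a starting position pairing a finite tuple of $C$ with its $\iota$-image, and let the second player keep the three convex blocks matched --- the lower $A$ of $C$ against $A+B_1$ in $D$, the copy of $K$ against the copy of $K$, and the upper $A$ of $C$ against $B_2+A$ in $D$ --- playing the identity on the middle block, a winning length-$n$ strategy for $A$ against $A+B_1$ on the first block (available since $A\hookrightarrow A+B_1$ is elementary, hence $A\equiv_n A+B_1$, and likewise over the starting parameters), and symmetrically $A$ against $B_2+A$ on the third. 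Since the three blocks are convex and carry the same order in $C$ as in $D$, and the second player never moves an element out of its block, the three partial isomorphisms glue to a partial isomorphism of $C$ and $D$; hence the second player wins, and as $n$ was arbitrary, $\iota$ is elementary. (Alternatively, one can carry out the same reduction using the relativization lemma to throw away everything but $K$, the point being identical: the two sides of $K$ look the same in $C$ as in $D$.)

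The crux --- and the step I expect to be the genuine obstacle --- is the one-sided equivalence $B_1\equiv B$ (symmetrically $B_2\equiv B$); it is vacuous when $B_1$ is empty, so assume it is not. This is exactly where the definition of $\sim$ does its work: because $K$ is a single $\sim$-class of the self-additive $B$, no set definable with parameters is simultaneously bounded, convex, and has points in both $B_1$ and $K$; consequently $B_1$ has no greatest element and, viewed from inside, looks like a full copy of $B$. I would prove $B_1\equiv B$ by a further Ehrenfeucht--Fra\"{\i}ss\'e argument: reduce to a finite language, and for each $n$ have the second player play inside a fixed copy of $B$ against $B_1$, answering the first player's moves in $B$ using Rubin's structural description of how $\sim$-classes and the convex pieces lying between them sit inside a self-additive order (Theorem~3.2 and Lemma~3.4 of~\cite{RubinTheoriesOfLinearOrder}), and checking that the first player cannot exploit the absent ``top'' of $B_1$. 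Since this is precisely the kind of fact Rubin proves while developing the $\sim$-relation, it may well be citable directly from~\cite{RubinTheoriesOfLinearOrder}; in any event, once it is available the two preceding paragraphs complete the proof.
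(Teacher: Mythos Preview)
The paper does not prove this lemma at all: it is quoted verbatim as Lemma~6.1 of \cite{RubinTheoriesOfLinearOrder} and used as a black box. So there is no in-paper argument to compare against; what you have written is an attempt to supply a proof the author chose to outsource.

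Your reduction is sound. Writing $B=B_1+K+B_2$ and matching the three convex blocks $A$, $K$, $A$ of $C$ against $A+B_1$, $K$, $B_2+A$ of $D$ is exactly right, and the block-by-block Ehrenfeucht--Fra\"\i ss\'e argument goes through for the reason you give: once $A\prec A+B_1$ and $A\prec B_2+A$ are known (the middle block being the identity), the three strategies glue because the blocks are convex and ordered compatibly. This part is essentially the same mechanism as in Proposition~\ref{DefinableSumProp1} and needs no further comment.

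The genuine content, as you correctly flag, is the one-sided claim $B_1\equiv B$ (so that self-additivity yields $A\prec A+B_1$). Your sketch of this step is not a proof: invoking ``Rubin's structural description of how $\sim$-classes \ldots\ sit inside a self-additive order'' and asserting that the first player ``cannot exploit the absent top'' is precisely the thing that needs to be established. What is actually required is that a formula over parameters from a single $\sim$-class $K$ cannot define a bounded convex set reaching outside $K$; equivalently, that cutting a self-additive order at a $\sim$-class boundary yields an elementary substructure. This is the substantive step Rubin carries out in the neighborhood of his Lemma~6.1, and it uses the full development of $\sim$ (not just Theorem~3.2 and Lemma~3.4, which only give the equivalence defining self-additivity). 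So your proposal is a correct reduction to the right hard fact, but the hard fact itself remains a citation to Rubin --- which, to be fair, is exactly what the paper does too.
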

	
	\begin{lemma}\label{SelfAdditiveManyTypesBCLemma}
		Let $T$ be a theory of a self-additive CLO such that $S_1(T)$ is infinite.  Then $T$ is $\lambda$-Borel complete for all $\lambda$.
	\end{lemma}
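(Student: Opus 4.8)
The plan is to reduce the statement to Lemma~\ref{ComplexityDriverLemma}; concretely, it suffices to build one self-additive $M\models T$ together with a type $p\in S_1(T)$ such that exactly one $\sim$-class of $M$ contains a realization of $p$. To get started, observe that $S_1(T)$ is an infinite compact Hausdorff space, hence has a non-isolated point, i.e.\ there is a non-principal type $q\in S_1(T)$; since the language $L_\kappa$ is countable, the omitting types theorem provides a countable $A_0\models T$ omitting $q$. I would then fix a countable $B\models T$ realizing $q$, say $q=\tp^B(b)$, and let $c$ be the $\sim$-class of $b$ in $B$, which is a convex $L$-substructure of $B$ since $\sim$-classes are convex.

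Next I would set $M=A_0+c+A_0$ and $N=A_0+B+A_0$. Because $A_0\equiv B$ are self-additive and $b\in B$, Rubin's Lemma~\ref{Rubin 6.1} gives that the canonical embedding $M\to N$ is elementary, so $M\prec N$; and since all three summands of $N$ are elementarily equivalent to $A_0$, self-additivity makes each summand elementary in $N$, forces $N\models T$, and makes $N$ (hence $M$) self-additive. The two outer copies of $A_0$ are summands of $N$, so they are elementary in $N$, and together with $M\prec N$ this makes them elementary in $M$; as $A_0$ omits $q$, no element of an outer copy realizes $q$ in $M$. On the other hand the middle copy of $B$ is elementary in $N$, so $b$ realizes $q$ in $N$ and thus in $M$. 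Hence $q$ is realized in $M$, and only within the middle piece $c$.

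The remaining — and main — task is to show that all of $c$ lies inside a single $\sim$-class of $M$, which will finish the argument via Lemma~\ref{ComplexityDriverLemma} applied to $M$ and $p=q$. Given $b_1,b_2\in c$ we have $b_1\sim b_2$ in $B$ (both are $\sim$-equivalent to $b$), witnessed by a formula $\phi(x,y)$ with $\phi(B,b_1)$ convex, bounded, and containing $b_1$ and $b_2$. The point is that ``$\phi(b_1,b_1)$'', ``$\phi(b_2,b_1)$'', ``$\phi(\,\cdot\,,b_1)$ defines a convex set'', and ``$\phi(\,\cdot\,,b_1)$ defines a bounded set'' are all first-order in $b_1,b_2$; since the middle copy of $B$ is elementary in $N$ and $M\prec N$ with $b_1,b_2\in M$, these statements transfer down to $M$, so $\phi(M,b_1)$ is convex, bounded, and contains $b_1$ and $b_2$, giving $b_1\sim b_2$ in $M$. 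Thus every realization of $q$ in $M$ lies in the single $\sim$-class $b\class$ of $M$, and Lemma~\ref{ComplexityDriverLemma} yields that $T$ is $\lambda$-Borel complete for all $\lambda$. The only genuine obstacle is that $\sim$ is a priori not first-order (it is an existential quantification over formulas), so one must check that enlarging $B$ to $M$ does not split $c$ into several classes; this is exactly what the first-order expressibility of convexity and boundedness of a parametrically definable set rules out.
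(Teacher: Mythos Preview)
Your proposal is correct and follows essentially the same approach as the paper's proof: both pick a nonisolated type, build $A+ (b/\!\sim)+A$ inside $A+B+A$ via Rubin's Lemma~\ref{Rubin 6.1}, and verify that the middle piece is the unique $\sim$-class realizing the type so that Lemma~\ref{ComplexityDriverLemma} applies. Your write-up is somewhat more explicit about why the $\sim$-relation on $c$ persists (the first-order expressibility of convexity and boundedness) and why the outer copies of $A_0$ remain elementary in $M$, but the architecture is identical to the paper's.
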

	\begin{proof}
		Let $p\in S_1(T)$ be nonisolated.  Let $A,B\models T$ be countable such that $A$ omits $p$ and $B$ realizes $p$ at $b$.  Let $B_0=b\class$ as computed in $B$, and let $C=A+B_0+A$. By Lemma~\ref{Rubin 6.1}, $C\prec A+B+A$ is elementary.  Since $A,B\models T$ and $T$ is self-additive, $A+B+A\models T$, so $C\models T$.  Also, both embeddings $A\to A+B+A$ are elementary, so in particular, no element of $A$ is $\sim$-equivalent to any element of $B_0$.  Similarly, since $B\prec A+B+A$ and every element of $B_0$ is $\sim$-equivalent in $B$, they are still $\sim$-equivalent in $A+B+A$, and thus in $C$.  Finally, $c\in C$ realizes the same type it does in $A+B+A$, and thus $C$ contains a unique $\sim$-class containing a realization of $p$.  So $C$ and $T$ satisfy the hypotheses of Lemma~\ref{ComplexityDriverLemma}, so $T$ is $\lambda$-Borel complete for all $\lambda$.
	\end{proof}
	
	Of course we cannot say the same when $S_1(T)$ is finite -- $(\b Q,<)$ is $\aleph_0$-categorical, and thus as far from Borel complete as one could be.  Our aim is to show that these are the only two cases which can occur, but we need to move slightly beyond the self-additive case to do so.  We borrow Lemma~5.4 of \cite{RubinTheoriesOfLinearOrder}:
	
	\begin{lemma}[Rubin]\label{Rubin 5.4}
		Let $T$ be a CLO with $S_1(T)$ finite.  If $A\models T$ and $a\in A$, let $T_a=\th(a\class)$.  Then $|S_1(T_a)|\leq |S_1(T)|$.  Also, one of the following alternatives holds:
		
		\begin{enumerate}
			\item For every $a\in A$, $(a\class)\prec A$.
			\item For every $a\in A$, the set $(a\class)$ is definable over $a$.  There is no first or last element in the quotient order $A\class$, and if $a\class<b\class$ and $p\in S_1(T)$, there is a $c$ realizing $p$ where $a\class<c\class<b\class$.
		\end{enumerate}
	\end{lemma}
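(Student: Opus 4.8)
The plan is to treat the cardinality bound and the dichotomy separately, the former being an easy consequence of Lemma~\ref{RelativizationLemma} and the latter resting on a local analysis of $\sim$-classes together with a uniformity argument.

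\emph{The bound $|S_1(T_a)|\le|S_1(T)|$.} Work in a saturated model $A^*\succ A$ and let $C^*$ be the $\sim$-class of $a$ in $A^*$. Since $C^*$ is convex in $A^*$, Lemma~\ref{RelativizationLemma} shows that $C^*$ is a saturated model of $T_a$ and that for $b\in C^*$ the type $\tp^{C^*}(b)$ determines $\tp^{A^*}(b)$. So, given $q\in S_1(T_a)$ realized by some $b\in C^*$, the assignment $q\mapsto\tp^{A^*}(b)$ is well defined, and it is injective: an automorphism of $A^*$ witnessing $\tp^{A^*}(b)=\tp^{A^*}(b')$ carries $C^*$ onto the $\sim$-class $C'$ of $b'$ and restricts to an isomorphism $C^*\to C'$ taking $b$ to $b'$, so $\tp^{C^*}(b)=\tp^{C'}(b')$. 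This identifies $S_1(T_a)$ with a subset of $S_1(T)$.

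\emph{The dichotomy.} Note that $a\class$ is the directed union of the $a$-definable, convex, bounded sets containing $a$ (the union of two such is again one of them, since both contain $a$). The crucial local statement is: \emph{either this union is eventually constant --- so $a\class$ is definable over $a$ --- or it is a proper increasing union, in which case $a\class\prec A$.} Granting this, the global dichotomy follows: whether $a\class$ is definable over $a$ depends only on $\tp(a)$ (in $A^*$, an automorphism moving $a$ to $a'$ moves $a\class$ to $a'\class$), so finiteness of $S_1(T)$ leaves only finitely many cases, and a uniformity argument --- using that a $\sim$-class is a self-additive CLO or a single point, that $|S_1(T_a)|\le|S_1(T)|$, and induction on $|S_1(T)|$ --- rules out a model of $T$ in which some $\sim$-class is an elementary submodel while another is properly definable over its point. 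Hence either $a\class\prec A$ for all $a$, alternative~(1), or $a\class$ is definable over $a$ for all $a$.

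In the latter case, the same formula defines every $\sim$-class (patch together the finitely many $\tp(a)$-cases), so $\sim$ is $\emptyset$-definable and the quotient order $A\class$ is interpretable in $A$; it is a self-additive CLO, since a nontrivial $\emptyset$-definable convex subset of $A\class$ would pull back to one of $A$. As self-additive CLOs have no endpoints, $A\class$ has no first or last element. For the density of types, fix $p\in S_1(T)$; since $S_1(T)$ is finite, $p$ is isolated, say by $\pi_p$, and $\pi_p$ is realized in every model of $T$ ($T$ being complete). The set of $x$ whose class lies in the quotient strictly below the class of every realization of $\pi_p$ is $\emptyset$-definable, convex, and proper, hence empty by self-additivity; thus the classes realizing $p$ are coinitial in $A\class$, and cofinal by symmetry, and a $p$-free open interval of $A\class$ would, via Proposition~\ref{DefinableSumProp1}, allow one to build a model of $T$ with an arbitrarily long and ultimately $\emptyset$-type-definable $p$-free convex block, contradicting self-additivity. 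This gives the last clause of~(2).

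The step I expect to be the main obstacle is the local claim that a non-stabilizing union forces $a\class\prec A$. I would verify the Tarski--Vaught criterion: given $A\models\exists y\,\psi(y,\o b)$ with $\o b$ from $a\class$, choose an $a$-definable convex bounded $C=\phi(A,a)\supseteq\o b$ with $C\subsetneq a\class$, and write $A=A_L+C+A_R$; a witness inside $C$ already lies in $a\class$, so the task is to bring a witness $y_0\in A_R$ (or symmetrically in $A_L$) down into the part of $a\class$ lying above $C$. The tool is the second characterization of self-additivity --- copies of $A$ sum elementarily --- which forces enough self-similarity of $A$ just above $C$ that $\tp^A(y_0/\o b)$ is realized inside $a\class$ once $a\class$ extends beyond $C$. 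Making this reflection precise is the technical heart, and it is precisely here that $S_1(T)$ finite is used: without it --- for instance in the dense shuffle of a point with $(\b Z,<)$, where $S_1$ is infinite --- a $\sim$-class can be neither an elementary submodel nor definable over its point.
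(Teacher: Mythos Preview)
The paper does not prove this lemma; it is quoted from Rubin~\cite{RubinTheoriesOfLinearOrder} and used as a black box, so there is no in-paper argument to compare your proposal against.

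On the substance: the cardinality bound is handled correctly. Lemma~\ref{RelativizationLemma} does give that $\tp^{C^*}(b)$ determines $\tp^{A^*}(b)$, and your injectivity argument via automorphisms is fine once one notes that $b,b'\in C^*$ forces the automorphism to fix $C^*$ setwise.

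The dichotomy, however, has two real gaps. The first you name yourself: the Tarski--Vaught verification that a properly increasing union of $a$-definable convex bounded sets yields $a\class\prec A$ is the crux, and you stop at ``making this reflection precise is the technical heart.'' Your instinct to exploit self-similarity above $C$ is sound, but $A$ is not assumed self-additive, so the sum-elementarity characterization does not apply to $A$ directly; what is actually used is that finiteness of $S_1(T)$ forces the cofinal sequence of definable convex neighborhoods of $a$ to eventually repeat $1$-types at their boundaries, and the needed reflection is extracted from that. This is genuine work in Rubin's argument, not a routine check.

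The second gap is the passage from the local claim to the global dichotomy. You assert that an induction on $|S_1(T)|$ ``rules out'' a mixed model, but supply no mechanism. Concretely: if $a\class\prec A$ and $b\in A$ is arbitrary, pick $b'\in a\class$ with $\tp(b')=\tp(b)$; then $b'\class=a\class$, and to conclude that $b\class$ is likewise not definable over $b$ you need that definability of a $\sim$-class over one of its points implies definability over all of them. This is true but rests on the symmetry argument behind ``$\sim$ is an equivalence relation,'' which the paper explicitly flags as nontrivial and does not reproduce. Your density argument for alternative~(2) has a similar issue: to contradict self-additivity of the quotient you need a $\emptyset$-definable $p$-free convex block, and Proposition~\ref{DefinableSumProp1} by itself does not produce one from a merely parameter-definable $p$-free open interval. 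The outline is reasonable, but as written it is a plan rather than a proof.
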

	
	With this lemma in hand, we can finish our work with self-additive structures:
	
	\begin{lemma}\label{FinitelyManyTypesLemma}
		Let $T$ be a theory of a self-additive CLO such that $S_1(T)$ is finite.  Then either $T$ is $\aleph_0$-categorical or $T$ is $\lambda$-Borel complete for all $\lambda$.
	\end{lemma}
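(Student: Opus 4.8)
The plan is to apply Rubin's Lemma~\ref{Rubin 5.4} to $T$ (legitimate, as $S_1(T)$ is finite) and treat its two alternatives separately. As a preliminary reduction, replace $A$ by a countable elementary substructure; this preserves $S_1(T)$, self-additivity, the $\sim$-relation, and which of the two alternatives of Lemma~\ref{Rubin 5.4} holds, so we may assume $A$ is countable. Observe also that finiteness of $S_1(T)$ forces every $1$-type of $T$ to be isolated, hence realized in every model of $T$; this is exactly what prevents us from simply rerunning the argument of Lemma~\ref{SelfAdditiveManyTypesBCLemma}, where a non-isolated type did all the work.

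Suppose alternative (2) holds: every $a\class$ is definable over $a$, the quotient $A\class$ is a dense linear order without endpoints, and every $p\in S_1(T)$ is realized densely in $A\class$. I claim $T$ is $\aleph_0$-categorical, and would prove it by showing $A$ is elementarily equivalent to a member of Rosenstein's class $\m M$ and invoking Theorem~\ref{Aleph0CatCharacterizationThm}. By Lemma~\ref{RelativizationLemma}, the (definable) set $a\class$ has isomorphism type, hence $\th(a\class)$, determined by $\tp(a)$; since $S_1(T)$ is finite, only finitely many theories $T_1,\ldots,T_m$ arise this way. Each $a\class$ is a proper definable convex piece of $A$ and so is ``simpler'': running an induction on the depth of the iterated $\sim$-condensation (with the understanding that if that depth is infinite we land in a Borel-complete situation, which is also an acceptable conclusion), each $T_j$ is $\aleph_0$-categorical, with $\aleph_0$-categorical representatives $C_j\model T_j$. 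Finally, since $A\class$ is dense without endpoints and the finitely many class-types occur densely, codensely, and exhaustively along it, an Ehrenfeucht--Fra\"iss\'e comparison — using Lemma~\ref{RelativizationLemma} together with Proposition~\ref{DefinableSumProp1} and Proposition~\ref{GeneralShuffleProposition}, in the spirit of Rosenstein's analysis — shows $A\equiv\sigma(C_1,\ldots,C_m)$, which is elementarily equivalent to a member of $\m M$ by closure of $\m M$ under shuffles; hence $A$ is $\aleph_0$-categorical.

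Suppose instead alternative (1) holds: $(a\class)\prec A$ for every $a\in A$. Here I aim to produce a model of $T$ in which some type is realized in a unique $\sim$-class, so that Lemma~\ref{ComplexityDriverLemma} applies and gives that $T$ is $\lambda$-Borel complete for all $\lambda$. The natural target is a model of $T$ that consists of a single $\sim$-class. Such a model exists if and only if the partial type ``$x\not\sim y$'' — formally $\{\phi \text{ does not witness } x\sim y : \phi\}$, where ``$\phi$ witnesses $x\sim y$'' is the first-order statement that $\phi(\cdot,x)$ is a bounded convex set containing both $x$ and $y$ — is non-principal, hence omissible. \emph{This non-principality is the step I expect to be the main obstacle.} The idea is that under alternative (1) each $\sim$-class is elementarily as rich as $A$ itself and carries arbitrarily large bounded convex neighbourhoods, so any consistent formula $\chi(x,y)$ can be realized by a $\sim$-equivalent pair in some model; this refutes any candidate isolating formula, so ``$x\not\sim y$'' is non-principal. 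Omitting it yields $M\model T$ with a unique $\sim$-class, and Lemma~\ref{ComplexityDriverLemma} finishes this case. The bookkeeping in alternative (2) is comparatively routine given the machinery already in place; the genuine difficulty is the omitting-types argument in alternative (1).
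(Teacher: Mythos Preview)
Your treatment of alternative (1) of Lemma~\ref{Rubin 5.4} misses the immediate point: if $(a\class)\prec A$ for every $a$, then $B:=a\class$ is already a model of $T$ consisting of a single $\sim$-class (since any witness $\phi(x,a)$ to $a\sim b$ in $A$ is, by elementarity, also a witness in $B$). So Lemma~\ref{ComplexityDriverLemma} applies instantly; no omitting-types argument is needed. What you flag as ``the genuine difficulty'' is a one-line observation.

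The real problem is in alternative (2), where your induction is not well-founded. You gesture at ``depth of iterated $\sim$-condensation,'' but you give no argument that this depth is finite, and your parenthetical fallback (``if infinite we land in a Borel-complete situation'') is unsupported. More concretely: the theories $T_j=\th(a\class)$ need not be self-additive, so you cannot recurse on the lemma as stated. The paper resolves this by strengthening the inductive hypothesis: it proves, by induction on $|S_1(T)|$, that \emph{every} CLO (self-additive or not) with finitely many $1$-types is either $\aleph_0$-categorical or $\lambda$-Borel complete for all $\lambda$. The non-self-additive case is handled by splitting along a definable proper initial segment, which strictly lowers the type count on each piece. In the self-additive case (2), Rubin's lemma gives $\sum_j|S_1(T_j)|\leq |S_1(T)|$, so when $k\geq 2$ each $T_j$ has strictly fewer types and the (general) inductive hypothesis applies; moreover if any $T_j$ is $\lambda$-Borel complete one must show $T$ is too, which is an additional argument you do not give. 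The $k=1$ subcase---where $T$ is a shuffle of a single theory $T_1$---requires a separate argument: one checks that $T_1$ cannot be self-additive (else $T_1=T$ and case (1) would apply), so the non-self-additive case handles $T_1$ at the \emph{same} value of $n$. Your sketch does not isolate or address this subcase.
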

	\begin{proof}
		Let $L$ be the underlying language of $T$, and let $n=|S_1(T)|$; we argue by induction on $n$, beginning with $n=1$.
		
		If $n=1$ then every element has the same type, and thus the same color, so essentially $T$ is the theory of a linear order.  If $T$ says the order has a first element, every element is first, so $T$ is the theory of a singleton, so is $\aleph_0$-categorical (and indeed totally categorical); the same happens with a last element.  Assume this does not happen, so there is no first or last element.  If some element has a unique successor, they all do, and their successors have predecessors, so everything does.  This is known to be an axiomatization of $(\b Z,<)$ (where the colors in $L$ are either uniformly true or uniformly false). This has a unique $\sim$-class, so is $\lambda$-Borel complete for all $\lambda$ by Lemma~\ref{ComplexityDriverLemma}.  Now assume these do not happen, so no element has an immediate successor or predecessor and there are no maximal or minimal elements.  This is known to be an axiomatization of $(\b Q,<)$, which is $\aleph_0$-categorical, completing the proof of the base case.
		
		We move on to the step, where $n\geq 2$. There are several cases.
		
		{\bf Case:} $T$ is not self-additive.
		
		Let $\phi(x)$ be a formula with parameters such that (according to $T$), the realizations of $\phi$ form a nonempty proper initial segment of the model.  Let $A\models T$, let $T_1=\th(\phi(A))$, and let $T_2=\th(\lnot\phi(A))$.  Note that $T_1$ and $T_2$ depend only on $T$, not on $A$.  If both are $\aleph_0$-categorical, so is $T$, by Theorem~\ref{Aleph0CatCharacterizationThm}.  On the other hand, given any model $A\models T$ and $B\models T_1$, we can construct a structure $A_B$ where we replace $\phi(A)$ with $B$.  By Proposition~\ref{DefinableSumProp1}, $A_B\models T$ and $\phi(A_B)=B$.  Evidently this gives a $\lambda$-Borel reduction $\Mod(T_1)\borelleq^\lambda \Mod(T)$, so if $T_1$ is $\lambda$-Borel complete, so is $T$.  The same goes for $T_2$.  Since $|S_1(T)|=|S_1(T_1)|+|S_1(T_2)|$, the inductive hypothesis applies to both $T_i$.  Thus, either both $T_i$ are $\aleph_0$-categorical or one of them is $\lambda$-Borel complete for all $\lambda$. So the lemma holds for $T$.
		
		{\bf Case:} $T$ is self-additive and case (1) of Lemma~\ref{Rubin 5.4} applies.
		
		Let $A\models T$, let $a\in \m A$ be arbitrary, and let $B=a\class$.  Then $B\prec A$, so $B\models T$, so $T$ has a model with a single $\sim$-class.  Then Lemma~\ref{ComplexityDriverLemma} applies, so $T$ is $\lambda$-Borel complete for all $\lambda$.
		
		{\bf Case:} $T$ is self-additive and case (2) of Lemma~\ref{Rubin 5.4} applies.
		
		Let $A\models T$ be arbitrary.  Each $\sim$-class is an $L$-structure on its own, and the theory of $a\class$ is determined by $\tp(a)$.  So there are $k\leq n$ $\sim$-classes up to elementary equivalence; enumerate their theories as $T_1,\ldots,T_k$.  To simplify notation, add $k$ unary predicates $U_1,\ldots, U_k$ to the language, and expand $T$ to the new language by saying $U_i(a)$ holds if and only if $a\class\models T_i$.  Since this is a definable expansion, this does not change the size of the type space, and $T$ satisfies the lemma if and only if its expansion does.
		
		Then $T$ states precisely that each maximal convex piece of $U_i$ is a model of $T_i$, and between any two ``convex pieces'' and for any $i\leq k$, there is a model of $T_i$ as a maximal convex piece of $U_i$.  It states that the $U_i$ are disjoint and exhaustive.  Fix particular countable models $A_i\models T_i$, and for any $M$ fitting the preceding description, form the $L$-structure $A$ by replacing each maximal convex piece of any $U_i$ with the $L$-structure $A_i$.  This can be done, and by Proposition~\ref{BasicSumProp}, $M\equiv A_M$.  However, given $M$ and $N$ fitting the description, $A_M\iso A_N$ by Proposition~\ref{GeneralShuffleProposition}, so the preceding description is a complete theory, so must completely axiomatize $T$.
		
		Next, see that $\sum_i |S_1(T_i)|\leq |S_1(T)|$.  For if $a$ and $b$ come from different $U_i$, they have different types in $T$.  And since the $\sim$-class of an element is formula-definable with that element, if $a$ and $b$ come from the same $U_i$ but have different types in that structure, they have different types in $T$.  Therefore, if $k\geq 2$, then $|S_1(T_i)|<|S_1(T)|$ for all $i$, so the inductive hypothesis applies to each of them.  If each is $\aleph_0$-categorical, so is $T$ by Proposition~\ref{Aleph0CatCharacterizationThm} -- $T$ is the shuffle of the $T_i$. If some $T_i$ is $\lambda$-Borel complete, then so is $T$, as follows.  Let $M\models T$ be countable and fixed.  For any $A\models T_i$ of size $\lambda$, let $M_A$ be formed by replacing each maximal convex model of $T_i$ with $A$.  Then $|M_A|=\lambda$, and given a $B\models T_i$ also of size $\lambda$, form $M_B$ in the same way. If $M_A\iso M_B$, this isomorphism preserves maximal convex pieces of $U_i$, so $A\iso B$.  With this in mind, let $\b V[G]$ collapse $\lambda$, so $A\equiv_{\infty\omega} B$ if and only if $A\iso B$ in $\b V[G]$, if and only if $M_A\iso M_B$ in $\b V[G]$, if and only if $M_A\equiv_{\infty\omega}M_B$.  This shows $\Mod_\lambda(T_i)\borelleq^\lambda \Mod_\lambda(T)$, so $T$ is also $\lambda$-Borel complete.
		
		The only remaining case is when $k=1$, so $T$ is a shuffle of $T_1$.  If $T_1$ is self-additive, then each $\sim$-class of any $A\models T$ is an elementary substructure of $A$, so $T_1=T$ and $T$ admits a model with a single $\sim$-class.  However, since the $\sim$-class of any element is definable, $T$ would then imply that every model has only one $\sim$-class, contradicting what we already know about $T$. So $T_1$ is not self-additive.  Then a previous case applies to $T_1$, so $T_1$ is either $\aleph_0$-categorical or is $\lambda$-Borel complete for all $\lambda$.  In either case, $T$ follows $T_1$ by the logic in the previous paragraph, completing the proof.
	\end{proof}
	
	While we do not care about orders with finitely many types for themselves, we do recover the following theorem which is crucial to us:
	
	\begin{theorem}\label{SADichotomyTheorem}
		Let $T$ be self-additive.  Then either $T$ is $\aleph_0$-categorical or is $\lambda$-Borel complete for all $\lambda$.
	\end{theorem}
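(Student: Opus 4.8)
The plan is to dispose of this as an immediate corollary of the two preceding lemmas, splitting on the cardinality of the $1$-type space. Unpacking the terminology first: saying $T$ is self-additive means $T=\Th(A)$ for some self-additive CLO $A$, and by the very definition of self-additive such an $A$ has more than one point. Hence $T$ genuinely is ``a theory of a self-additive CLO'' in the sense used in Lemmas~\ref{SelfAdditiveManyTypesBCLemma} and~\ref{FinitelyManyTypesLemma}, and those lemmas are available to us.

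First I would note that $S_1(T)$ is either finite or infinite. If $S_1(T)$ is infinite, then Lemma~\ref{SelfAdditiveManyTypesBCLemma} applies directly and shows that $T$ is $\lambda$-Borel complete for all $\lambda$; in particular the stated disjunction holds, and since an infinite $1$-type space rules out $\aleph_0$-categoricity, we are in the second alternative. If instead $S_1(T)$ is finite, then Lemma~\ref{FinitelyManyTypesLemma} applies verbatim and yields exactly the desired conclusion: $T$ is $\aleph_0$-categorical or $T$ is $\lambda$-Borel complete for all $\lambda$. Either way the theorem is proved.

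There is essentially no obstacle in this last step: all of the substance --- the construction of complex models via the $\sim$-condensation and the reduction from $\LO$, together with the inductive analysis on $|S_1(T)|$ through the non-self-additive, case~(1), and case~(2) subcases --- was already carried out in the proofs of Lemmas~\ref{ComplexityDriverLemma}, \ref{SelfAdditiveManyTypesBCLemma}, and \ref{FinitelyManyTypesLemma}. The only thing worth keeping straight is that the finite/infinite split on $S_1(T)$ is exhaustive and that the two lemmas together cover both halves with nothing left to reconcile, which is immediate.
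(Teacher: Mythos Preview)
Your proposal is correct and matches the paper's approach: the paper states this theorem immediately after Lemmas~\ref{SelfAdditiveManyTypesBCLemma} and~\ref{FinitelyManyTypesLemma} without a separate proof, precisely because the finite/infinite split on $|S_1(T)|$ reduces it to those two lemmas.
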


	\section{The General Proof}\label{IntervalTypesSection}
	
	Finally we consider the general case, where we break up arbitrary CLOs into what are essentially self-additive pieces.  The crucial definition is the following:
	
	\begin{definition}
		Let $T$ be a CLO. A convex formula $\phi(x)$, always in one variable, is a formula whose set of realizations in a model of $T$ is always a convex set. A \emph{convex type} $\Phi(x)$ is a maximal consistent collection of convex formulas over $\emptyset$.  The space $IT(T)$ is the set of all convex types.
	\end{definition}
	
	We may give $IT(T)$ the usual formula topology, wherein it is compact, Hausdorff, second countable, and totally disconnected as usual.  However, convex types are naturally ordered by $<$ as follows: say $\Phi<\Psi$ if there are formulas $\phi\in \Phi$ and $\psi\in \Psi$ where every realization of $\phi$ is strictly below every realization of $\Psi$ (according to $T$).  It is immediate that if $\Phi\not=\Psi$, then either $\Phi<\Psi$ or $\Psi<\Phi$, and not both.  This induces the same topology as before.
	
	For our purposes, say an $L$-structure $A$ is \emph{sufficiently saturated} if it is $\aleph_0$-saturated, and if $a,b\in A$ realize the same type, there is an automorphism of $A$ taking $a$ to $b$; we will never need a larger monster model than this.  Every complete theory admits such a model, although they need not be countable unless the theory is small.  Sufficiently saturated CLOs are ``locally self-additive:''
	
	\begin{lemma}
		Let $T$ be a CLO, and let $\m S\models T$ be sufficiently saturated.  Then for all $\Phi\in \IT(T)$, the set $\Phi(\m S)$ of realizations of $\Phi$ in $\m S$ is either a singleton or self-additive as an $L$-structure.
	\end{lemma}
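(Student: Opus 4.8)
The plan is to set $B := \Phi(\m S)$, show first that $B$ is a convex substructure of $\m S$, and then — assuming $|B|>1$, since otherwise $B$ is a singleton and there is nothing to prove — show that $B$ is self-additive. For the latter I would invoke the first clause of the Definition/Theorem above defining self-additivity, so the goal reduces to: the only convex subsets of $B$ definable over $\emptyset$ in the $L$-structure $B$ are $\emptyset$ and $B$.

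Two preliminary observations are routine. First, $B$ is convex in $\m S$: if $a<b<c$ in $\m S$ with $a,c\in B$, then each convex formula $\theta\in\Phi$ has $\theta(\m S)$ convex and containing $a$ and $c$, hence $b$; so $b$ realizes every member of $\Phi$, and since the set of convex formulas true of $b$ is consistent and contains the maximal consistent set $\Phi$, it equals $\Phi$, i.e.\ $b\in B$. (Convexity of $B$ is exactly what makes Lemma~\ref{RelativizationLemma} applicable to $B$ inside $\m S$; in particular $\Th(B)$ is again a CLO.) Second, since $T=\Th(\m S)$ is complete, a parameter-free formula $\theta(x)$ is a convex formula of $T$ iff $\theta(\m S)$ is convex — nonconvexity of $\theta$ in some model of $T$ is expressed by a sentence, which would then hold in $\m S$. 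Consequently, if $D\subseteq\m S$ is convex and definable over $\emptyset$ in $\m S$, then $D=\chi(\m S)$ for a convex formula $\chi$ of $T$; and if moreover $D\cap B\neq\emptyset$, then any point of $D\cap B$ realizes both $\chi$ and $\Phi$, forcing $\chi\in\Phi$ by maximality and hence $B\subseteq\chi(\m S)=D$. In short, the trace on $B$ of every convex $\emptyset$-definable subset of $\m S$ is $\emptyset$ or $B$.

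Now suppose, toward a contradiction, that $C\subseteq B$ is convex, definable over $\emptyset$ in the structure $B$, with $\emptyset\neq C\neq B$. After a routine normalization — enlarging $C$ by the set of elements of $B$ lying below all of $C$, which is still definable over $\emptyset$ in $B$, and reversing the order of $\m S$ if $C$ happened to be a final segment — I may assume $C$ is a proper nonempty initial segment of $B$. The goal, and the step I expect to be the crux, is to show that the cut which $C$ determines inside $\m S$ is already cut out there by a convex formula of $T$: granting this, that formula gives a convex $\emptyset$-definable $D\subseteq\m S$ with $D\cap B=C$, and the previous paragraph then forces $C\in\{\emptyset,B\}$, the desired contradiction. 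To establish it I would use Rubin's relativization machinery (Lemma~\ref{RelativizationLemma}) to obtain, from the $\emptyset$-definition of $C$ in $B$, a definition of $C$ inside $B$ that uses parameters only from $\m S\setminus B$; since $B$ is convex, each such parameter lies entirely above or entirely below $B$, and I would then invoke the $\aleph_0$-saturation of $\m S$ to eliminate these one-sided parameters and descend to a genuine convex $\emptyset$-definition. The heart of the matter is exactly this elimination: informally, were the top cut of $C$ not $\emptyset$-definable in $\m S$, a compactness argument against the saturation of $\m S$ would produce an element realizing $\Phi$ that could be placed neither in $C$ nor in its complement within $B=\Phi(\m S)$, an absurdity; carrying this out carefully, in the style of Rubin's relativization proofs, is where essentially all the work lies.
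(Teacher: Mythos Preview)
There is a real gap at the step you flag as the crux. Lemma~\ref{RelativizationLemma} runs in the opposite direction to what you need: it takes a formula with parameters in $\m S\setminus B$ and produces a parameter-free formula that agrees with it on $B$. Your argument requires the converse --- starting from the $\emptyset$-definition of $C$ \emph{inside the structure $B$} and producing an $\m S$-formula (eventually parameter-free) that carves out the same cut. Since $B=\Phi(\m S)$ need not be definable in $\m S$ over any finite parameter set when $\Phi$ is nonisolated, there is no evident way to relativize quantifiers ranging over $B$ back up to $\m S$. Your proposed use of saturation to ``eliminate one-sided parameters'' is also too vague to carry weight: saturation produces realizations of types, not eliminations of parameters. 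And the final compactness sketch cannot work as stated --- an element of $B$ ``placed neither in $C$ nor in its complement within $B$'' does not exist, since $C$ and $B\setminus C$ partition $B$.

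The paper avoids all of this by exploiting the second half of ``sufficiently saturated,'' namely strong homogeneity, which you never invoke. Given $a\in C$ and $b\in B\setminus C$ with $a<b$, one checks (using only that $a$ and $b$ share the convex type $\Phi$) that $\tp_{\m S}(b)\cup\{x<a\}$ is consistent; saturation then yields $c<a$ with $\tp_{\m S}(c)=\tp_{\m S}(b)$, and in particular $c\in B$. Homogeneity now gives an automorphism $\sigma$ of $\m S$ with $\sigma(b)=c$; since $\sigma$ preserves the type $\Phi$, it restricts to an automorphism of $B$. But $c<a\in C$ forces $c\in C$, so $\sigma|_B$ carries $b\notin C$ to $c\in C$, contradicting the $\emptyset$-definability of $C$ in $B$. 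This route never needs to lift the definition of $C$ from $B$ to $\m S$, which is exactly the step your proposal leaves open.
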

	\begin{proof}
		Suppose not.  Then there is a formula $\phi(x)$ whose realizations are initial in $\Phi(\m S)$, and there are points $a,b\in\Phi(\m S)$ where $\Phi(\m S)\models \phi(a)\land\lnot\phi(b)$.  Let $p=\tp(b)$ as formed in the whole of $\m S$.  Then the type $p(x)\cup\{x<a\}$ must be consistent; otherwise there would be some $\psi(x)\in p(x)$ where $a$ lies strictly below the convex definable set $\exists y \left(y\leq x\land \psi(y)\right)$ which includes $b$, and therefore $a$ and $b$ realize different convex types.  By $\aleph_0$-saturation of $\m S$, there is $c\in\m S$ which realizes $p$ and has $c<a$.  Clearly $\Phi(\m S)\models\phi(c)$.  But then there is an automorphism $\sigma$ of $\m S$ where $\sigma(a)=c$.  But $\sigma$ preserves $\Phi$, so is an automorphism of $\Phi(\m S)$ which takes $b$ to $c$.  Automorphisms are elementary maps, so $\Phi(\m S)\models\lnot\phi(c)$, a contradiction.
	\end{proof}
	
	The \emph{theory} of $\Phi(\m S)$ turns out to depend only on $T$, not on the choice of sufficiently saturated model:
	
	\begin{lemma}
		Let $\m S_1$ and $\m S_2$ be sufficiently saturated models of a CLO $T$.  For any $\Phi\in \IT(T)$, the $L$-structures $\Phi(\m S_1)$ and $\Phi(\m S_2)$ are back-and-forth equivalent, and thus elementarily equivalent.
	\end{lemma}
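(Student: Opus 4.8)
The plan is to build an explicit back-and-forth system between the induced $L$-structures $\Phi(\m S_1)$ and $\Phi(\m S_2)$ and then invoke the standard fact, already noted above, that back-and-forth equivalent structures agree on all of $L_{\infty\omega}$ and hence in particular are elementarily equivalent. First I would observe that both $\Phi(\m S_1)$ and $\Phi(\m S_2)$ are nonempty: $\Phi$ is a maximal consistent set of convex formulas over $\emptyset$, so it is finitely satisfiable in models of $T$, hence consistent with $T$, hence realized in every $\aleph_0$-saturated model of $T$, and in particular in each $\m S_i$.

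The system $\m I$ I would use consists of all finite partial maps $f\colon \o a\mapsto\o b$, with $\o a$ a tuple from $\Phi(\m S_1)$ and $\o b$ a tuple from $\Phi(\m S_2)$, such that $\tp^{\m S_1}(\o a)=\tp^{\m S_2}(\o b)$ --- the types being computed in the full ambient models $\m S_i$, not in the pieces. Such an $f$ is automatically a partial isomorphism of the induced $L$-structures: since $L$ is relational and $\Phi(\m S_i)$ is convex, hence a substructure of $\m S_i$, the atomic $L$-type of any tuple of $\Phi(\m S_i)$ is just the restriction of its atomic type in $\m S_i$, so matching ambient types forces matching atomic types. (Lemma~\ref{RelativizationLemma} actually gives more, namely that the whole type of $\o a$ inside $\Phi(\m S_i)$ as an $L$-structure is determined by $\tp^{\m S_i}(\o a)$, but I would not need that.) The empty map lies in $\m I$, so $\m I\not=\emptyset$.

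For the forth property (back is symmetric), given $f\colon\o a\mapsto\o b$ in $\m I$ and $a'\in\Phi(\m S_1)$, I would argue as follows. Matching ambient types makes $\o a\mapsto\o b$ elementary from $\m S_1$ to $\m S_2$, so it carries $p(x):=\tp^{\m S_1}(a'/\o a)$ to a complete type $p'(x)$ over $\o b$ which is finitely satisfied in $\m S_2$, and by $\aleph_0$-saturation there is $b'\in\m S_2$ realizing $p'$. Since each convex formula of $\Phi$ is parameter-free and $a'$ realizes $\Phi$, those formulas all lie in $p$ and hence in $p'$, so $b'\in\Phi(\m S_2)$; and since the base map is elementary, $\tp^{\m S_1}(\o a a')=\tp^{\m S_2}(\o b b')$, so $f\cup\{(a',b')\}\in\m I$.

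The only delicate point --- and the one I expect to be the crux --- is exactly this last step: one must check that the witness $b'$ handed back by saturation lands inside $\Phi(\m S_2)$ rather than merely inside $\m S_2$, which is what upgrades $\m I$ from a back-and-forth system for $\m S_1,\m S_2$ to one for the pieces $\Phi(\m S_i)$. This succeeds precisely because $\Phi$ is built from parameter-free convex formulas, which are inherited by any tuple sharing the same ambient type. Once $\m I$ is seen to be nonempty with the two extension properties, it witnesses $\Phi(\m S_1)\equiv_{\infty\omega}\Phi(\m S_2)$, and elementary equivalence follows.
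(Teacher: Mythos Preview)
Your proof is correct and follows essentially the same approach as the paper: both set up the back-and-forth system via tuples with equal ambient types in the $\m S_i$, and both hinge on the observation that since $\Phi$ consists of parameter-free formulas, the saturation witness automatically lands in $\Phi(\m S_2)$. The only minor difference is that the paper finds the witness by first realizing the joint type $\tp(\o a,a')$ at some fresh pair $(\o c,c)$ in $\m S_2$ and then applying the homogeneity clause of ``sufficiently saturated'' to move $\o c$ onto $\o b$, whereas you apply $\aleph_0$-saturation directly over the parameters $\o b$; your route is slightly cleaner and in fact shows that the homogeneity hypothesis is not needed for this lemma.
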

	\begin{proof}
		Our claim is that if $\o a\in \Phi(\m S_1)^n$ and $\o b\in\Phi(\m S_2)^n$, then $(\m S_1,\o a)\equiv (\m S_2,\o b)$ as structures with specified constants, and if $a\in\Phi(\m S_1)$ is arbitrary, there is a $b\in\Phi(\m S_2)$ where $(\m S_1,\o aa)\equiv (\m S_2,\o bb)$ as structures with specified constants.  This implies that $\o aa$ and $\o bb$ have the same atomic type in the substructures, so together with the opposite (which follows from symmetry) gives the result.
		
		So fix such tuples $\o a$, $\o b$, and $a$.  Then $p(\o x,x)=\tp(\o a,a)$ (evaluated in $\m S_1$) is realized and thus consistent with $T$.  Therefore it is realized in $\m S_2$ by some pair $\o c,c$.  But then $\tp(\o c)=\tp(\o a)=\tp(\o b)$, so there is an automorphism $\sigma$ of $\m S_2$ taking $\o c$ to $\o b$; let $b=\sigma(c)$.  Clearly $\tp(\o aa)=\tp(\o cc)=\tp(\o bb)$, and $b\in\Phi(\m S_2)$ since $c$ is, so $b$ satisfies the conditions and proves the result.
	\end{proof}
	
	By the Lemma, $\Phi(\m S_1)\equiv \Phi(\m S_2)$, so we may define the notation $T_\Phi$ to be the complete $L$-theory of $\Phi(\m S)$ for any sufficiently saturated $\m S\models T$.  We can now declare our fundamental dichotomy:
	
	\begin{definition}
		Let $T$ be a CLO.  Say $T$ is \emph{locally simple} if for all $\Phi\in \IT(T)$, $T_\Phi$ is $\aleph_0$-categorical.  Otherwise say $T$ is \emph{locally nonsimple}.
	\end{definition}
	
	\begin{theorem}
		If $T$ is a locally nonsimple CLO, $T$ is $\lambda$-Borel complete for all $\lambda$.
	\end{theorem}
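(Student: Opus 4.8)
The plan is to show that, for every infinite cardinal $\lambda$, there is a $\lambda$-Borel reduction $T_\Phi\borelleq^\lambda T$ for a suitably chosen $\Phi$ whose local theory $T_\Phi$ is itself $\lambda$-Borel complete; then, since any $\Psi$ reduces to $T_\Phi$ and $T_\Phi$ reduces to $T$, also $\Psi$ reduces to $T$, so $T$ is $\lambda$-Borel complete. Concretely: since $T$ is locally nonsimple, fix $\Phi\in\IT(T)$ with $T_\Phi$ not $\aleph_0$-categorical, and fix a sufficiently saturated $\m S\models T$. The set $\Phi(\m S)=\bigcap_{\phi\in\Phi}\phi(\m S)$ is convex, as an intersection of convex sets, and nonempty, as $\m S$ is $\aleph_0$-saturated; so by the two preceding lemmas it is either a singleton or a self-additive $L$-structure, with complete theory $T_\Phi$. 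A singleton would make $T_\Phi$ $\aleph_0$-categorical, so $\Phi(\m S)$ is self-additive, and Theorem~\ref{SADichotomyTheorem} applied to $T_\Phi$ shows that $T_\Phi$ is $\lambda$-Borel complete for every $\lambda$. It therefore remains to reduce $T_\Phi$ to $T$.

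The first real step is to fix a single \emph{countable} model $A_0\models T$ inside which $\Phi$ carves out a clean convex copy of the self-additive piece. I would expand the language by a new unary predicate $P$, interpret $P^{\m S}:=\Phi(\m S)$, and take a countable elementary substructure $(A_0,P_0)\prec(\m S,P)$; write $A_0=B_1+C_0+B_2$ for the resulting convex decomposition, where $C_0:=P_0$. A routine relativization-of-quantifiers argument together with elementarity of the pair gives $C_0\equiv\Phi(\m S)$, hence $\th(C_0)=T_\Phi$. The crucial point is that $\Phi(A_0)=C_0$: the inclusion $C_0\subseteq\Phi(A_0)$ holds because $(\m S,P)\models\forall x\,(P(x)\to\phi(x))$ for every $\phi\in\Phi$; and if some $b\in A_0\setminus C_0$ satisfied every $\phi\in\Phi$ in $A_0$, then, since $A_0\prec\m S$ in $L$ and the formulas $\phi$ are parameter-free, $b$ would satisfy every $\phi\in\Phi$ in $\m S$, hence lie in $\Phi(\m S)=P^{\m S}$, forcing $b\in P_0=C_0$ -- a contradiction.

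Next, fixing $\lambda$, I would define $f\colon\Mod_\lambda(T_\Phi)\to\Mod_\lambda(T)$ by $f(D)=B_1+D+B_2$, identifying the universe with $\lambda$ as in the proof of Theorem~\ref{LinearOrdersBorelCompleteTheorem} (legitimate since $B_1,B_2$ are countable and $|D|=\lambda$). Because $\Phi(A_0)=C_0$ is convex and $D\equiv C_0$, Proposition~\ref{DefinableSumProp1} gives $f(D)\equiv A_0\models T$ and $\Phi(f(D))=D$, and $f$ is manifestly $\lambda$-Borel. To see $f$ is a reduction I would pass to a forcing extension collapsing $\lambda$, in which $\equiv_{\infty\omega}$ agrees with $\iso$ for all structures involved while $B_1,B_2$ stay countable. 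There, $D\iso D'$ implies $f(D)\iso f(D')$ by Proposition~\ref{BasicSumProp}(1) applied over the three-element index order; conversely, an isomorphism $g\colon f(D)\to f(D')$ sends the realization set of the parameter-free type $\Phi$ in $f(D)$ onto that in $f(D')$, that is, $D$ onto $D'$, so $g$ restricts to an isomorphism of the induced $L$-structures $D\to D'$. Hence $f(D)\equiv_{\infty\omega}f(D')$ iff $D\equiv_{\infty\omega}D'$, so $T_\Phi\borelleq^\lambda T$ and $T$ is $\lambda$-Borel complete; as $\lambda$ is arbitrary, this completes the argument.

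I expect the main obstacle to be the middle step: manufacturing a countable $A_0\models T$ whose convex decomposition has $\Phi(A_0)$ exactly equal to a copy $C_0$ of the self-additive piece with $\th(C_0)=T_\Phi$, so that the clean ``replace the $\Phi$-part'' surgery of Proposition~\ref{DefinableSumProp1} applies directly. Merely passing to a countable model of $T$ does not obviously suffice, since $\Phi$ is not first-order and a small model could acquire spurious realizations of $\Phi$ outside the intended piece; the device that rules this out is to take the countable elementary substructure in the language expanded by a predicate naming $\Phi(\m S)$, after which elementarity in the expanded language forces $\Phi(A_0)=C_0$.
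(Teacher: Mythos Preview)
Your proposal is correct and follows essentially the same route as the paper: pick $\Phi$ with $T_\Phi$ non--$\aleph_0$-categorical, invoke Theorem~\ref{SADichotomyTheorem} to get $T_\Phi$ $\lambda$-Borel complete, and then reduce $T_\Phi$ to $T$ via Proposition~\ref{DefinableSumProp1} by replacing the $\Phi$-piece of a fixed countable model, finishing with the forcing-collapse argument. The only difference is cosmetic: where the paper simply writes ``let $A\prec\m S$ be countable such that $\Phi(A)\prec\Phi(\m S)$,'' you unpack this by naming $\Phi(\m S)$ with a predicate and taking a countable elementary substructure in the expanded language---which is exactly the standard way to justify that sentence, so your ``main obstacle'' is in fact already handled by the paper's one-line assertion.
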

	\begin{proof}
		Let $\m S\models T$ be sufficiently saturated, and let $\Phi\in \IT(T)$ be such that $T_\Phi$ is not $\aleph_0$-categorical.  Let $A\prec\m S$ be countable such that $\Phi(A)\prec \Phi(\m S)$.  Then $\Phi(A)\models T_\Phi$, which is a self-additive CLO which is $\lambda$-Borel complete for all $\lambda$ by Theorem~\ref{SADichotomyTheorem}.  Furthermore, $T_\Phi\borelleq^\lambda T$ as follows.  If $B\in\Mod_\lambda(T_\Phi)$, construct $A_B$ by replacing $\Phi(A)$ by $B$; then $A\equiv A_B$ and $\Phi(A_B)=B$ by Proposition~\ref{DefinableSumProp1}, so in particular $A_B\in\Mod_\lambda(T)$.  Clearly $B\iso B'$ if and only if $A_B\iso A_{B'}$, so by using that fact in some $\b V[G]$ which collapses $\lambda$, $B\equiv_{\infty\omega} B'$ if and only if $A_B\equiv_{\infty\omega}A_{B'}$, completing the proof.
	\end{proof}
	
	Since the global behavior of a CLO $T$ is determined essentially by the structure of $\IT(T)$, if there is also local simplicity, there is little opportunity for complexity.  Thus, locally simple CLOs turn out to admit a nice characterization. We borrow Lemma~2.7(1) of \cite{RubinTheoriesOfLinearOrder}: if $B$ is a CLO, $A\prec B$, and $C$ is the convex hull of $A$ in $B$ (that is, the minimal convex subset of $B$ which contains $A$), then $A\prec C$.  The following is the core lemma for understanding this case:
	
	\begin{lemma}\label{LocallySimpleDriverLemma}
		Let $T$ be locally simple and $\Phi\in \IT(T)$. Then there is a (minimal) natural number $n_\Phi$ and a set $\{T_\Phi^i:1\leq i\leq n_\Phi\}$ of distinct $\aleph_0$-categorical $L$-theories where for all $A\models T$, there is an $i$ where $\Phi(A)\models T_\Phi^i$.
		
		Against model-theoretic convention, we include the ``theory of the empty set'' in the list, where we say $\emptyset\models \forall x(x\not=x)$, in case $A$ omits $\Phi$.
		
		Further, $n_\Phi=1$ if and only if $\Phi$ is isolated as a type in $\IT(T)$.
	\end{lemma}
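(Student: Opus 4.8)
The plan is to prove the finiteness of the list and the characterization separately, routing everything through the Relativization Lemma (Lemma~\ref{RelativizationLemma}) and the structure theory of $\aleph_0$-categorical CLOs (Theorem~\ref{Aleph0CatCharacterizationThm}, Proposition~\ref{Aleph0CatGeneralFactsProp}). Fix a sufficiently saturated $\m S\models T$; since $T$ is locally simple, $\Phi(\m S)\models T_\Phi$ is $\aleph_0$-categorical, say of Rosenstein rank $r$, and only finitely many colors of $L$ are inequivalent in $T_\Phi$, so fix a finite $L_\Phi\subseteq L$ carrying its content. The heart of the matter is the claim that for \emph{every} $A\models T$, either $\Phi(A)=\emptyset$ or $\th(\Phi(A))$ is $\aleph_0$-categorical of rank at most $2r+1$ with all content in $L_\Phi$. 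Granting that, Proposition~\ref{Aleph0CatGeneralFactsProp}(1)--(3) applied in the finite language $L_\Phi$ leaves only finitely many possibilities for $\th(\Phi(A))$ (the extra colors $P_i$ of $L$ being recovered on $\Phi(A)$ by relativizing their $L_\Phi$-definitions in $T_\Phi$), and I then take $\{T_\Phi^i:1\le i\le n_\Phi\}$ to be exactly those theories that actually occur, adjoining the ``empty theory'' $\forall x(x\ne x)$ if $\Phi$ is ever omitted; minimality of $n_\Phi$ is then automatic.

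To establish the core claim, fix $A\models T$ with $\Phi(A)\ne\emptyset$ and pass to $L^{+}=L\cup\{P\}$ with $A^{+}=(A,\Phi(A))$, noting that the sentence ``$P$ is convex'' and each sentence $\forall x\,(P(x)\to\chi(x))$ for $\chi\in\Phi$ hold in $A^{+}$. I would then take a sufficiently saturated elementary extension $\m S^{+}=(\m S',Q)\succ A^{+}$, arranged so that the $L$-reduct $\m S'$ is itself sufficiently saturated (so $\Phi(\m S')\models T_\Phi$). Then $Q$ is convex in $\m S'$ and $Q\subseteq\Phi(\m S')$, hence $Q$ is a nonempty convex subset of the $\aleph_0$-categorical order $\Phi(\m S')$; Proposition~\ref{Aleph0CatGeneralFactsProp}(1) makes $Q$ $\aleph_0$-categorical of rank $\le 2r+1$, and relativizing through Lemma~\ref{RelativizationLemma} the color-eliminations and the finitely-many-convex-subsets bound of $\Phi(\m S')$ confines $\th(Q)$ to finitely many $L$-theories. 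Finally, the standard relativization of quantifiers to the predicate $P$ turns $A^{+}\prec\m S^{+}$ into an elementary embedding of the induced $L$-structures $\Phi(A)\prec Q$, so $\th(\Phi(A))=\th(Q)$.

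For the equivalence ``$n_\Phi=1\iff\Phi$ isolated in $\IT(T)$'', I would first reduce it to ``$n_\Phi=1\iff\Phi$ is principal as a partial type in $x$''. If some $\emptyset$-formula $\theta$ isolates $\Phi$, then so does the convex $\emptyset$-formula $\phi'(x)=\exists y\exists z\,(\theta(y)\land\theta(z)\land y\le x\le z)$ --- its realizations land in the convex set $\Phi(\m S)$ and include those of $\theta$, so $\phi'\vdash_T\bigwedge\Phi$, and since $\Phi$ is maximal, $\phi'\in\Phi$, whence $\phi'$ in fact decides every convex formula and $[\phi']=\{\Phi\}$; conversely an isolated point of $\IT(T)$ is cut out by a convex formula, which isolates the partial type. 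Given this, if $\Phi$ is isolated pick a convex $\phi\in\Phi$ with $[\phi]=\{\Phi\}$: then $\phi(A)=\Phi(A)$ for every $A\models T$, this set is nonempty because completeness of $T$ forces $T\models\exists x\,\phi(x)$, and relativizing the axioms of $T_\Phi$ to the $\emptyset$-definable set $\phi$ shows $\Phi(A)\models T_\Phi$; as $T_\Phi$ is complete this gives $\th(\Phi(A))=T_\Phi$ always, so $n_\Phi=1$. If $\Phi$ is not isolated, then $\Phi$ is a non-principal partial type, and since $L_\kappa$ is countable the omitting types theorem yields $A\models T$ with $\Phi(A)=\emptyset$, while $\m S$ realizes $\Phi$ with $\Phi(\m S)\ne\emptyset$, so the list contains at least the empty theory and $T_\Phi$ and $n_\Phi\ge 2$.

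The main obstacle I anticipate is the first claim, specifically the bookkeeping needed to compare the induced $L$-structure on the type-definable convex set $\Phi(A)$ with honest convex subsets of $\Phi(\m S')$: one must be careful that the predicate trick delivers a $Q$ that is genuinely a convex subset of a sufficiently saturated $\Phi(\m S')\models T_\Phi$, and that Lemma~\ref{RelativizationLemma} then transfers $\aleph_0$-categoricity, the rank bound, and the color-eliminations from $\Phi(\m S')$ to $Q$ and from $Q$ to $\Phi(A)$. Everything else --- the characterization of $n_\Phi=1$ and the reductions --- is comparatively routine once this relativization infrastructure is in place.
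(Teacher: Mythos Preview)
Your proposal is correct and follows essentially the same strategy as the paper: both show that $\Phi(A)$ has the theory of a convex subset of the $\aleph_0$-categorical structure $\Phi(\m S)$ and then invoke Proposition~\ref{Aleph0CatGeneralFactsProp} for the finiteness, with the characterization of $n_\Phi=1$ handled identically via isolation and omitting types. The only notable difference is the mechanism for the key elementary equivalence: you name a predicate for $\Phi(A)$ and pass to a sufficiently saturated $L^+$-extension to obtain $\Phi(A)\prec Q\subseteq\Phi(\m S')$ by relativizing to $P$, whereas the paper takes $A\prec\m S$ directly and appeals to Rubin's Lemma~2.7(1) (cited just before the lemma) to get $\Phi(A)\prec C$ for $C$ the convex hull of $\Phi(A)$ in $\Phi(\m S)$.
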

	\begin{proof}
		Let $A\models T$.  Then $\Phi(A)\subset \Phi(\m S)$ for some sufficiently saturated $\m S\models T$ where $A\prec\m S$.  Let $C$ be the convex hull of $\Phi(A)$ in $\Phi(\m S)$; then $A\prec C$, so they have the same theory.  Also, $\Phi(C)$ is a convex subset of an $\aleph_0$-categorical CLO, so is $\aleph_0$-categorical by Proposition~\ref{Aleph0CatGeneralFactsProp}.  So $\Phi(A)$ is $\aleph_0$-categorical.  Also by Proposition~\ref{Aleph0CatGeneralFactsProp}, there are only finitely many pairwise inequivalent convex subsets of $\Phi(\m S)$, and this bound depends only on $T_\Phi$.  So the main text of the lemma is proven.
		
		If $\Phi$ is nonisolated, there is a model omitting it and another realizing it, so $n_\Phi\geq 2$.  If $\Phi$ is isolated by some formula $\phi$, then for every sentence $\sigma$ of $L$, the sentence ``$\sigma$ holds on $\Phi$'' is equivalent to ``the relativization of $\sigma$ to $\phi$ is true,'' which is a single $L$-sentence and thus decided by $T$.  So $n_\Phi=1$.
	\end{proof}
	
	This allows us to give a simple characterization of back-and-forth equivalence for locally simple CLOs:
	
	\begin{lemma}\label{LocallySimpleCharacterizationLemma}
		Let $T$ be a locally simple CLO, and $A,B\models T$.  The following are equivalent:
		
		\begin{enumerate}
			\item $A\equiv_{\infty\omega} B$
			\item For all $\Phi\in \IT(T)$, $\Phi(A)\equiv_{\infty\omega}\Phi(B)$
			\item For all $\Phi\in \IT(T)$, $\Phi(A)\equiv \Phi(B)$.
		\end{enumerate}
		
		The equivalence of (1) and (2) does not require local simplicity.  If $A$ and $B$ are countable, (1) is equivalent to $A\iso B$, and likewise with (2).
	\end{lemma}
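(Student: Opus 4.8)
\emph{Proof plan.} The strategy is to handle $(2)\Leftrightarrow(3)$ using local simplicity, and $(1)\Leftrightarrow(2)$ by collapsing cardinals to reduce to the countable case, where neither local simplicity nor anything beyond Proposition~\ref{BasicSumProp} is needed.

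For $(2)\Rightarrow(3)$: a back-and-forth system between two structures witnesses that they satisfy the same first-order sentences, so $\Phi(A)\equiv_{\infty\omega}\Phi(B)$ immediately gives $\Phi(A)\equiv\Phi(B)$. For $(3)\Rightarrow(2)$: by Lemma~\ref{LocallySimpleDriverLemma} together with local simplicity, $\Phi(A)$ is a model of one of the $\aleph_0$-categorical theories $T_\Phi^i$, hence is $\aleph_0$-categorical; by $(3)$, $\Phi(B)$ has the same complete theory. It then remains to note that two elementarily equivalent models of an $\aleph_0$-categorical theory are back-and-forth equivalent -- one can see this by collapsing cardinals to make them countable, where $\aleph_0$-categoricity forces them isomorphic, and then invoking the absoluteness of $\equiv_{\infty\omega}$; the degenerate case of two empty structures (the convention of Lemma~\ref{LocallySimpleDriverLemma}) is trivial.

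For the harder equivalence $(1)\Leftrightarrow(2)$, which uses no local simplicity, fix a forcing extension $\b V[G]$ collapsing $|A|$ and $|B|$ to $\aleph_0$. There $A\equiv_{\infty\omega}B$ iff $A\iso B$, and $\Phi(A)\equiv_{\infty\omega}\Phi(B)$ iff $\Phi(A)\iso\Phi(B)$ for each $\Phi\in\IT(T)$ (transporting the relevant hypothesis into $\b V[G]$ and the relevant conclusion back out, by absoluteness of $\equiv_{\infty\omega}$). So it suffices to prove, in $\b V[G]$, that $A\iso B$ iff $\Phi(A)\iso\Phi(B)$ for all $\Phi$. The forward direction is clear: an isomorphism $A\to B$ preserves each convex type, so restricts to an $L$-isomorphism $\Phi(A)\to\Phi(B)$. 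For the converse, recall that every element of a model of $T$ realizes a unique convex type, and that $\Phi<\Psi$ forces every realization of $\Phi$ to lie below every realization of $\Psi$; hence $A$ is canonically the sum $\sum_{\Phi\in S}\Phi(A)$, where $S=\{\Phi\in\IT(T):\Phi(A)\neq\emptyset\}$ carries the linear order inherited from $\IT(T)$, and similarly $B=\sum_{\Phi\in S'}\Phi(B)$ with $S'=\{\Phi:\Phi(B)\neq\emptyset\}$. The one point that genuinely uses $(2)$ is the identity $S=S'$: since $\Phi(A)\iso\Phi(B)$, we have $\Phi(A)=\emptyset$ iff $\Phi(B)=\emptyset$. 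With $A$ and $B$ now exhibited as sums over the same index order $S$ with $\Phi(A)\iso\Phi(B)$ for each $\Phi\in S$, Proposition~\ref{BasicSumProp}(1) yields $A\iso B$.

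The remaining ``in particular'' clauses are immediate: for countable structures $\equiv_{\infty\omega}$ coincides with $\iso$, and each $\Phi(A)$ and $\Phi(B)$ is then countable as a subset. I expect the only real bookkeeping -- and the point most worth stating carefully -- to be the identity $S=S'$ of the two index orders in $(2)\Rightarrow(1)$; once that is in hand, writing $A$ and $B$ as sums over $\IT(T)$ reduces everything to Proposition~\ref{BasicSumProp}.
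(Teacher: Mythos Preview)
Your proof is correct. For $(2)\Leftrightarrow(3)$ you match the paper exactly. For $(1)\Leftrightarrow(2)$ the paper is more direct: rather than collapsing cardinals and invoking the sum decomposition together with Proposition~\ref{BasicSumProp}(1), it argues with back-and-forth systems \emph{in situ}. A back-and-forth system between $A$ and $B$ restricts to one between each $\Phi(A)$ and $\Phi(B)$, since convex types are type-definable and hence preserved; conversely, given back-and-forth systems on each $\Phi$-piece, one patches them into a single system on the whole by taking finite unions of partial maps from the various pieces (well-defined because the pieces are convex and linearly ordered by $\IT(T)$). Your route is a bit heavier---it imports the forcing machinery of Section~2.3 and effectively hides the patching step inside Proposition~\ref{BasicSumProp}(1)---but it is perfectly valid, and your explicit isolation of the identity $S=S'$ of index sets is a point the paper leaves implicit in its ``patching'' remark.
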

	\begin{proof}
		Assuming (1), we can get (2) by playing the back-and-forth game within any particular $\Phi$; we can reverse this step by patching the various solutions to the $\Phi$ together, and making sure we always play within a particular $\Phi$.  Assuming (2) we get (3) immediately, since elementary equivalence is just $\equiv_{\omega\omega}$.  The nontrivial step is to show that (3) implies (2), which follows from Lemma~\ref{LocallySimpleDriverLemma}.  For if $\Phi(A)\models T_\Phi^i$ for some $i$, then $\Phi(B)\models T_\Phi^i$, and since $T_\Phi^i$ is $\aleph_0$-categorical, all of its models are back-and-forth equivalent.  The equivalence of back-and-forth equivalence with isomorphism when the structures are countable is standard and follows from Zorn's lemma.
	\end{proof}
	
	It only remains to give an exhaustive list of the behaviors that a CLO can exhibit, both in terms of $I_{\infty\omega}(T)$ and of the Borel complexity of $\iso_T$.  We will use the following lemma to construct many models, to the extent that the type space allows it.  Approximately, we would like to choose to omit or realize whatever types we like, but if we omit too many, we don't have enough content left over to have a model.  This turns out to be the only obstruction:
	
	\begin{lemma}\label{LocallySimpleManyModelsLemma}
		Suppose $A\subset C\subset B$, that $A,B\models T$, and that $A\prec B$.  Suppose also that there is a collection $K\subset IT(T)$ where $C=B\setminus_{\Phi\in K}\Phi(B)$.  Then $C\prec B$ as well.
	\end{lemma}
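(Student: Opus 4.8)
The plan is to show $C \prec B$ using the Tarski–Vaught criterion: given a formula $\psi(\bar x, y)$ and a tuple $\bar c$ from $C$ with $B \models \exists y\, \psi(\bar c, y)$, I must find a witness inside $C$. The point is that the elements we have removed from $B$ to form $C$ are organized by convex types in $K$, so removing them is ``convexity-respecting,'' and I want to leverage the relativization machinery (Lemma~\ref{RelativizationLemma}) together with the hypothesis $A \prec B$ to relocate any witness from $B$ into the part that survives in $C$.

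First I would fix $\psi$, $\bar c$, and a witness $b_0 \in B$ with $B \models \psi(\bar c, b_0)$. If $b_0 \in C$ we are done, so assume $b_0 \in \Phi(B)$ for some $\Phi \in K$. The set $\Phi(B)$ is convex in $B$, and $A \prec B$, so by Lemma~2.7(1) of Rubin (quoted just before Lemma~\ref{LocallySimpleDriverLemma}) $A$ is elementary in its convex hull; more to the point, I want to say that since $A \prec B$ and $b_0$ lies in a convex type-class $\Phi(B)$, there is a corresponding witness ``on the same side'' already visible from $A$. Concretely: the formula $\theta(\bar x) := \exists y\, \psi(\bar x, y)$ holds of $\bar c$ in $B$, but $\bar c$ need not be from $A$. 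So instead I would argue locally inside $\Phi(B)$: decompose $B$ as a convex sum in which $\Phi(B)$ appears as a single convex block, write $B = B_{<\Phi} + \Phi(B) + B_{>\Phi}$, and use Lemma~\ref{RelativizationLemma} to pull the existence of a witness down to a statement about parameters. The key observation is that every component $\Phi'(B)$ for $\Phi' \notin K$ is still entirely present in $C$, and the block $\Phi(B)$ itself gets shrunk to $\Phi(C) = A \cap \Phi(B)$-type content (really, $\Phi(C)$ is whatever $C$ retains of $\Phi$). Since $A \prec B$, whatever first-order fact $\psi(\bar c, -)$ expresses about a point in $\Phi(B)$ — after relativizing via Lemma~\ref{RelativizationLemma} to a parameter-free formula about $\Phi(B)$ as a structure — is reflected by $\Phi(A)$, hence realized in $\Phi(C)$ as long as $\Phi(C)$ is nonempty; and if $\Phi(C)$ is empty, then the original witness $b_0$ cannot have been forced into $\Phi(B)$ by anything visible from $C$, so some other $\Phi'$-block or some point of $C$ on the boundary serves.

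More carefully, here is the cleaner route I would take. Work with the Ehrenfeucht–Fraïssé / direct Tarski–Vaught argument simultaneously over all blocks. For a given $\bar c \in C$ and witness $b_0 \in B$: if $b_0 \in C$, done. Otherwise $b_0 \in \Phi(B) \setminus \Phi(C)$ for some $\Phi \in K$. Consider the convex decomposition $B = B_1 + \Phi(B) + B_2$ relative to this block. Note $\bar c$ splits into pieces $\bar c_1$ (in $B_1 \cap C$), $\bar c_2$ (in $B_2 \cap C$), and $\bar c_0$ (in $\Phi(C) \subseteq \Phi(B)$). By Lemma~\ref{RelativizationLemma} applied with $B' = \Phi(B)$ convex in $B$, the statement ``$\psi(\bar c, y)$ has a solution $y \in \Phi(B)$'' is, for fixed $\bar c_1, \bar c_2$ outside the block, equivalent to a formula $\psi^\#(\bar c_0, y)$ about $\Phi(B)$ with parameters only from $\bar c_0$ — and this formula is parameter-free over $\Phi(B)$ after absorbing $\bar c_0$. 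Since $A \prec B$, and since (by Lemma~\ref{LocallySimpleDriverLemma}, in the locally simple setting, or directly by the convexity of $\Phi$ in general) $\Phi(A) \prec \Phi(B)$ whenever $\Phi(A) \neq \emptyset$, the solution $b_0 \in \Phi(B)$ of $\psi^\#$ over $\bar c_0$ descends to a solution $b_0' \in \Phi(A) \subseteq \Phi(C) \subseteq C$, and $b_0'$ is then a genuine witness for $\psi(\bar c, y)$ in $C$ because the block is convex and the external parameters $\bar c_1, \bar c_2$ see it in the same order-position. If instead $\Phi(C) = \emptyset$ — i.e., $\bar c$ has no coordinate in this block — then no coordinate of $\bar c$ lies in $\Phi(B)$, and one checks that since $A \prec B$ and $A \subseteq C$, a witness for $\exists y\,\psi(\bar c,y)$ can already be found with $y \notin \Phi(B)$: the block $\Phi(B)$ can be excised without affecting the truth of an existential formula whose parameters avoid it, because $A$ (which avoids $\Phi(B)$ too, at least in the relevant coordinates) already realizes it. Iterating over the finitely many $\Phi \in K$ in whose blocks a witness might hide — or, if $K$ is infinite, noting that a single formula and finite tuple only interact with finitely many blocks in a way that matters, since all but finitely many blocks are ``far away'' in the order — produces a witness inside $C$, which is exactly Tarski–Vaught, so $C \prec B$.

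The main obstacle I anticipate is the bookkeeping when $K$ is infinite: I need to argue that the formula $\psi$ and the finite tuple $\bar c$ only ``care'' about finitely many of the removed blocks, so that the reduction to the single-block case above can be iterated finitely. This should follow because convex types are linearly ordered and $\bar c$ has finitely many coordinates, so there is a largest block below all of $\bar c$ and a smallest above, and a witness can always be sought within that finite window together with the finitely many blocks meeting $\bar c$'s convex hull — but making this precise, and in particular handling a witness that wants to escape to $\pm\infty$, will require the self-additivity / no-endpoints structure and possibly another appeal to Lemma~\ref{RelativizationLemma} to see that the ``far'' blocks are interchangeable. The honest version of the proof is likely the uniform EF-game argument sketched in Proposition~\ref{DefinableSumProp1}: build a winning strategy for the duplicator in the game between $C$ and $B$ by playing identically on all surviving blocks and on the boundary, and handling moves into a removed block $\Phi(B)$ by redirecting into $\Phi(C)$ (or into an adjacent surviving block when $\Phi(C) = \emptyset$) using $\aleph_0$-saturation-style density of the retained content — this sidesteps the Tarski–Vaught finiteness worry entirely and is the step I'd expect to write out in full.
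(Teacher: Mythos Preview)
Your skeleton is right---Tarski--Vaught plus Lemma~\ref{RelativizationLemma} plus $A\prec B$---but the assembly has a genuine gap, and the paper's argument is much shorter than what you sketch.

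First, for every $\Phi\in K$ you have $\Phi(B)\cap C=\emptyset$ by the very definition of $C$. Since $A\subseteq C$, this forces $\Phi(A)=\emptyset$ as well. So your tuple $\o c_0$ is always empty, and the ``descent via $\Phi(A)\prec\Phi(B)$'' never applies: there is nothing in $\Phi(A)$ to descend to. The only live case is the one you label ``$\Phi(C)=\emptyset$,'' and there your argument trails off into ``one checks that $\ldots$ $A$ already realizes it'' without saying to which formula you are applying $A\prec B$; the obvious candidate $\exists y\,\psi(\o c,y)$ has parameters $\o c$ that need not lie in $A$.

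The missing step, and essentially the whole content of the paper's proof, is this. Since no coordinate of $\o c$ lies in $\Phi(B)$, choose a single convex formula $\psi\in\Phi$ with $b\in\psi(B)$ and $c_i\notin\psi(B)$ for all $i$. Lemma~\ref{RelativizationLemma}, applied with convex set $\psi(B)$ and outside parameters $\o c$, produces a \emph{parameter-free} $\phi^{\#}(y)$ such that for all $y\in\psi(B)$, $\psi(B)\models\phi^{\#}(y)$ iff $B\models\phi(\o c,y)$. Relativizing $\phi^{\#}$ syntactically to the definable set $\psi$ gives a parameter-free $\phi^*(y)$ with $B\models\phi^*(b)$ and $B\models\forall y\,(\phi^*(y)\to\phi(\o c,y))$. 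Now apply $A\prec B$ to the \emph{sentence} $\exists y\,\phi^*(y)$: you get $a\in A$ with $B\models\phi^*(a)$, hence $B\models\phi(\o c,a)$, and $a\in A\subseteq C$. That is the entire argument.

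In particular there is no iteration and no finiteness worry about $K$: the single witness $a$ lands in $A$, which is globally contained in $C$, so it cannot lie in any removed block. Your concern about infinitely many $\Phi\in K$, and the EF-game alternative at the end, are both detours caused by not seeing that the parameters $\o c$ disappear entirely after one application of relativization.
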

	\begin{proof}
		Let $\phi(\o x,y)$ be a formula, $\o c$ a tuple from $C$, and $b$ an element from $B$ where $B\models\phi(\o c, b)$.  It's enough to show there is a $c\in C$ where $B\models\phi(\o c,c)$.  By the particular construction of $C$, either $b\in C$ or there is a convex formula $\psi(y)$ where $B\models\psi(b)$, and where no element of $\o c$ realizes $\psi$.  By Lemma~\ref{RelativizationLemma}, there is a formula $\phi^{\#}(y)$ where for all $b'$ realizing $\psi$, $\psi(B)\models\phi^{\#}(b')$ if and only if $B\models\phi(\o c,b')$.  Since $\psi$ is itself definable, there is a formula $\phi^*(y)$ where $B\models \forall y\left(\phi^*(y)\leftrightarrow \phi(\o c,y)\right)$.  Of course $B\models\phi^*(b)$, and since $A\prec B$, there is an $a\in A$ where $B\models\phi^*(a)$.  Since $A\subset C$, this $a$ is the element we were looking for, which completes the proof.
	\end{proof}
	
	We can now give individual cases:
	
	\begin{proposition}\label{LocallySimpleOneProp}
		If $T$ is locally simple and $\IT(T)$ is finite, $T$ is $\aleph_0$-categorical and $I_{\infty\omega}(T)=1$.
	\end{proposition}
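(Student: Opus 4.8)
The plan is to read this off almost directly from Lemmas~\ref{LocallySimpleDriverLemma} and~\ref{LocallySimpleCharacterizationLemma}. First I would note that $\IT(T)$ with the formula topology is Hausdorff, so being finite it is discrete; hence every $\Phi\in\IT(T)$ is isolated as a type in $\IT(T)$. By the last clause of Lemma~\ref{LocallySimpleDriverLemma}, this means $n_\Phi=1$ for every $\Phi$; that is, for each $\Phi$ there is a single $\aleph_0$-categorical $L$-theory $T_\Phi^1$ such that $T\proves$ ``$\Phi(A)\models T_\Phi^1$'' for every $A\models T$, where the convention about the ``theory of the empty set'' covers the case in which $\Phi$ is omitted.

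Next I would invoke Lemma~\ref{LocallySimpleCharacterizationLemma}. For any $A,B\models T$, condition (3) of that lemma asks that $\Phi(A)\equiv\Phi(B)$ for all $\Phi\in\IT(T)$; but by the previous paragraph $\Phi(A)$ and $\Phi(B)$ both model the fixed theory $T_\Phi^1$, so (3) holds for every pair $A,B$. By the equivalence of (1) and (3) in that lemma — which is exactly where local simplicity enters — we get $A\equiv_{\infty\omega}B$. Since this holds for all $A,B\models T$, there is a single model of $T$ up to back-and-forth equivalence, i.e.\ $I_{\infty\omega}(T)=1$.

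Finally, restricting to countable models, back-and-forth equivalence coincides with isomorphism (by the Zorn's-lemma argument recorded after Lemma~\ref{LocallySimpleCharacterizationLemma}), so $T$ has a unique countable model up to isomorphism and is $\aleph_0$-categorical. If instead $T$ is the theory of a finite CLO, it is $\aleph_0$-categorical by the paper's convention and still has exactly one model of any size, so $I_{\infty\omega}(T)=1$ as well; the argument above handles this uniformly.

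I do not anticipate a genuine obstacle here: the substantive work has all been done in the preceding lemmas. The only points that require any care are the topological triviality that a finite Hausdorff space is discrete (so that every convex type is isolated and Lemma~\ref{LocallySimpleDriverLemma} forces $n_\Phi=1$), and the bookkeeping of the empty-theory convention so that models omitting some $\Phi$ are treated correctly; both are routine.
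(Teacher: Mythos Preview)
Your proposal is correct and follows essentially the same approach as the paper's own proof: finiteness of $\IT(T)$ forces every convex type to be isolated, so $n_\Phi=1$ by Lemma~\ref{LocallySimpleDriverLemma}, and then Lemma~\ref{LocallySimpleCharacterizationLemma} gives back-and-forth equivalence of any two models. You have simply spelled out a few details (the finite-Hausdorff-implies-discrete step, the empty-theory bookkeeping, and the finite-CLO case) that the paper leaves implicit.
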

	\begin{proof}
		Since $\IT(T)$ is finite, every $\Phi\in \IT(T)$ is isolated.  Thus $n_\Phi=1$ for all $\Phi$, so every $A,B\models T$ are back-and-forth equivalent by Lemma~\ref{LocallySimpleCharacterizationLemma}.  If $A$ and $B$ are also countable, they are isomorphic as well.
	\end{proof}
	
	\begin{proposition}\label{LocallySimpleFiniteProp}
		If $T$ is locally simple and $\IT(T)$ is infinite but with only finitely many nonisolated types, there is a natural number $n\geq 3$ where $\iso_T$ is Borel equivalent to $(n,=)$ and $I_{\infty\omega}(T)=n$.
	\end{proposition}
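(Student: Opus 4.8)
The plan is to pin down the back-and-forth class of any $A\models T$ by a finite amount of data and then count.  Since $\IT(T)$ is an infinite compact metrizable space it has at least one limit point, so the set of nonisolated convex types is nonempty and finite; list it as $\{\Phi_1,\dots,\Phi_k\}$ with $k\geq 1$.  By Lemma~\ref{LocallySimpleDriverLemma}, for isolated $\Phi$ the $L$-structure $\Phi(A)$ is determined up to $\equiv$ by $T$ alone, while for each $\Phi_j$ there are only $n_{\Phi_j}\geq 2$ possibilities (one of which is ``$\Phi_j$ omitted'').  By Lemma~\ref{LocallySimpleCharacterizationLemma}, $A\equiv_{\infty\omega}B$ iff $\Phi(A)\equiv\Phi(B)$ for all $\Phi\in\IT(T)$, which by the previous remark is equivalent to $\Phi_j(A)\equiv\Phi_j(B)$ for $1\leq j\leq k$.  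So $c(A):=\big(\,\text{theory of }\Phi_1(A),\dots,\text{theory of }\Phi_k(A)\,\big)$ is a complete invariant for $\equiv_{\infty\omega}$ among models of $T$, valued in a finite set; let $n$ be the number of values it attains.

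Now for the two numerical claims.  Each attained value is attained by a countable model: given $A\models T$ realizing it, expand $A$ by a predicate naming each $\Phi_j(A)$ and take a countable elementary substructure $A_0$; since $A_0\prec A$ makes membership in each $\Phi_j$ absolute, $\Phi_j(A_0)=\Phi_j(A)\cap A_0\prec\Phi_j(A)$, so $c(A_0)=c(A)$.  Hence there are exactly $n$ back-and-forth classes of models of $T$ of any size, so $I_{\infty\omega}(T)=n$, and exactly $n$ isomorphism classes of countable models.  For the Borel computation, note that ``$\Phi_j(A)\models T^i_{\Phi_j}$'' is an $L_{\omega_1\omega}$-condition on $A$ — the Scott sentence of the $\aleph_0$-categorical theory $T^i_{\Phi_j}$, relativized to the $L_{\omega_1\omega}$-formula defining the countable type $\Phi_j$ — so the fibers of $c$ are Borel, and $c$ followed by a bijection of its range with $\{1,\dots,n\}$ is a Borel reduction of $\iso_T$ to $(n,=)$; the reverse reduction is immediate once we know there are at least $n$ countable models.

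It remains to prove $n\geq 3$.  Fix a nonisolated type $\Phi:=\Phi_1$.  Two values of $c$ come from a countable model omitting $\Phi$ (the omitting types theorem) and from a sufficiently saturated $\m S\models T$, where $\Phi(\m S)\models T_\Phi$ by definition.  I claim a third value has $\Phi$ realized but with $\Phi(A)$ a proper convex subset of $\Phi(\m S)$ possessing an endpoint.  First, $\Phi(\m S)$ is not a single point: otherwise the full type of that point would be realized at most once in every model of $T$ (a short $\aleph_0$-saturation argument), so its isolating formula would be a convex formula isolating $\Phi$, contradicting nonisolation.  By the lemma that $\Phi(\m S)$ is a singleton or self-additive, $\Phi(\m S)$ is thus self-additive, hence by local simplicity and Theorem~\ref{SADichotomyTheorem} an $\aleph_0$-categorical self-additive CLO, so densely ordered with no first or last element.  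Since $\IT(T)$ has more than one point, $\Phi$ is not both least and greatest in $\IT(T)$; say there is a convex type $<\Phi$, so the set $M_1\subseteq\m S$ of points realizing a convex type $<\Phi$ is nonempty.  Now pick $a\in\Phi(\m S)$ and let $B$ be $\m S$ with the open initial segment $\{x\in\Phi(\m S):x<a\}$ deleted; a relativization argument in the style of Lemma~\ref{LocallySimpleManyModelsLemma} should give $B\prec\m S$, trading (via Lemma~\ref{RelativizationLemma}) any would-be witness stranded in the deleted part for a genuine one in $M_1$ or in $\{x\in\Phi(\m S):x\geq a\}$, using that a set definable in a self-additive order over parameters all $\geq a$ cannot be nonempty, bounded above by $a$, and cofinal below $M_1$.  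Then $B\models T$ and $\Phi(B)=\{x\in\Phi(\m S):x\geq a\}$ has a least element, so it is elementarily equivalent neither to a model of $T_\Phi$ (which is self-additive, hence endpoint-free) nor to the empty structure; this is a third value of $c$, so $n\geq 3$.

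I expect the main obstacle to be that last step: verifying that deleting the open lower (or upper) half of the self-additive piece $\Phi(\m S)$ leaves an elementary substructure.  The argument is routine in spirit — Rubin's relativization lemma reduces it to the internal structure of $\Phi(\m S)$ — but it requires a careful description of which bounded definable subsets an $\aleph_0$-categorical self-additive CLO admits over a fixed tuple of parameters, together with the symmetric case analysis depending on whether $\Phi$ is a limit of isolated types from below, from above, or from both sides.
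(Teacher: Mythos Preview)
Your invariant $c(A)$, the counting argument, the Borel reduction via $L_{\omega_1\omega}$-definability of the fibers, and the downward L\"owenheim--Skolem step showing every value of $c$ is attained by a countable model are all correct and match the paper's proof essentially line for line (the paper is terser about the Borel computation, but the content is the same).

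The divergence is in the proof that $n\geq 3$. The paper does not construct a third model at all: having shown that $T$ has finitely many countable models, it simply invokes Vaught's theorem that no complete first-order theory in a countable language has exactly two countable models. That is the entire argument.

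Your explicit-construction route is substantially harder, and the gap you flag is real. The deletion $B=\m S\setminus\{x\in\Phi(\m S):x<a\}$ is \emph{not} an instance of Lemma~\ref{LocallySimpleManyModelsLemma}: that lemma removes entire sets $\Phi(B)$ for $\Phi$ ranging over a set of convex types, whereas you are removing a proper initial segment of a single $\Phi(\m S)$. To run Tarski--Vaught you would need to show that any formula $\psi(\bar c,y)$ with parameters $\bar c$ split between $M_1$, $[a,\infty)\cap\Phi(\m S)$, and $M_2$ cannot have its witnesses confined to $(-\infty,a)\cap\Phi(\m S)$; after relativizing via Lemma~\ref{RelativizationLemma} this becomes a statement about parameter-definable subsets of an $\aleph_0$-categorical self-additive CLO over parameters lying entirely in a final segment, and that statement is not available in the paper. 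It is plausibly true, but proving it is more work than the proposition warrants, and there is a further case split (whether $\Phi$ is approached from below, from above, or both) that you note but do not carry out. Replace the last two paragraphs with a citation of Vaught's theorem and the proof is complete.
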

	\begin{proof}
		Let $\Phi_1,\ldots,\Phi_k\in\IT(T)$ be the nonisolated convex types, and let $m$ be the product of the $n_{\Phi_i}$ as defined in Lemma~\ref{LocallySimpleDriverLemma}.  For any $A\models T$, let $t_A$ be $(\th(\Phi_i(A)):i\leq k)$.  If $A,B\models T$, then $A\equiv_{\infty\omega}B$ if and only if $t_A=t_B$.  Further, there are at most $m$ possible sequences $t_A$, so $T$ has at most $m$ countable models up to isomorphism; call the exact count $n$.  That $n\geq 2$ comes the fact that some type is nonisolated; that $n\geq 3$ comes from the fact that $T$ is a complete first-order theory.
		
		Clearly $I_{\infty\omega}(T)\geq n$.  That $I_{\infty\omega}(T)\leq n$ comes as follows; if $A\models T$ is arbitrary, let $A_0\prec A$ be countable and have $\Phi(A_0)\prec \Phi(A)$ for all $\Phi\in \IT(T)$.  Then $A\equiv_{\infty\omega}A_0$.  And for any two models $A,B\models T$, $A\equiv_{\infty\omega} B$ if and only if $A_0\equiv_{\infty\omega}B_0$, if and only if $A_0\iso B_0$ (see \cite{MarkerMT} for this equivalence).  Since there are $n$ isomorphism types of countable models of $T$, $I_{\infty\omega}(T)\leq n$, completing the proof.
	\end{proof}
	
	\begin{proposition}\label{LocallySimpleCountableProp}
		If $T$ is locally simple and $\IT(T)$ is countable but with infinitely many nonisolated types, then $\iso_T$ is Borel equivalent to $\iso_1$ and $I_{\infty\omega}(T)=\beth_1$.
	\end{proposition}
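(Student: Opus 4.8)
The plan is to prove $\iso_T\boreleq\iso_1$ by establishing each direction separately, after which the value of $I_{\infty\omega}(T)$ will follow quickly. Throughout, fix an enumeration $\{\Phi_i:i\in\omega\}$ of the nonisolated convex types (an infinite set by hypothesis); by Lemma~\ref{LocallySimpleDriverLemma} we have $n_{\Phi_i}\geq 2$ for every $i$, whereas $n_\Phi=1$ for every isolated $\Phi\in\IT(T)$.

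For $\iso_T\borelleq\iso_1$, map a model $A\models T$ to the invariant $t_A=\big(\th(\Phi_i(A))\big)_{i\in\omega}$, an element of the Polish space $P=\prod_{i\in\omega}\{T_{\Phi_i}^1,\dots,T_{\Phi_i}^{n_{\Phi_i}}\}$. By Lemma~\ref{LocallySimpleCharacterizationLemma}, for countable $A,B\models T$ we have $A\iso B$ if and only if $\Phi(A)\equiv\Phi(B)$ for all $\Phi\in\IT(T)$, and since the isolated types contribute nothing this is precisely $t_A=t_B$, so $A\mapsto t_A$ is a reduction. It is Borel: for fixed $i$ and $j$, using that the finitely many possibilities $T^1_{\Phi_i},\dots,T^{n_{\Phi_i}}_{\Phi_i}$ are pairwise distinct complete theories, the set of $A$ with $\th(\Phi_i(A))=T_{\Phi_i}^j$ is carved out by finitely many conditions of the shape ``$\Phi_i(A)\models\sigma$'', and each such condition is the truth in $A$ of the relativization $\sigma^{\Phi_i}$ — obtained by replacing each bound $\exists x$ occurring in $\sigma$ with $\exists x\big(\bigwedge_{\phi\in\Phi_i}\phi(x)\wedge\cdots\big)$ and dually for $\forall$ — which is an $L_{\omega_1\omega}$-sentence and so defines a Borel set. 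Since $P$ is standard Borel of cardinality at most continuum, $(P,=)\borelleq(\b R,=)=\iso_1$, hence $\iso_T\borelleq\iso_1$.

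For $\iso_1\borelleq\iso_T$, I produce continuum-many countable models of $T$ that are pairwise nonisomorphic, depending on a real parameter in a Borel way. Fix a sufficiently saturated $\m S\models T$. First observe that each nonisolated $\Phi$ is nonprincipal as a partial type in $x$: were $\Phi(x)$ isolated by a formula $\theta(x)$, then the convex formula $\hat\theta(x)$ asserting that $x$ lies between two realizations of $\theta$ would imply every $\phi\in\Phi$, hence would lie in $\Phi$ and isolate it in $\IT(T)$, contradicting nonisolation. Therefore, by the omitting types theorem, $\m S$ has a countable elementary submodel $A^*$ omitting every nonisolated convex type; enlarge it to a countable $B$ with $A^*\subseteq B\prec\m S$ and $\Phi(B)\prec\Phi(\m S)$ for all $\Phi\in\IT(T)$ (a standard chain construction, legitimate because $\IT(T)$ is countable). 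For $r\in 2^\omega$ set $C_r=B\setminus\bigcup_{\{i\,:\,r(i)=0\}}\Phi_i(B)$. Since $A^*$ omits every nonisolated type, $A^*\subseteq C_r\subseteq B$ and $A^*\prec B$, so Lemma~\ref{LocallySimpleManyModelsLemma} gives $C_r\prec B$; thus $C_r$ is a countable model of $T$, and by disjointness of distinct convex types $\Phi_i(C_r)=\emptyset$ when $r(i)=0$ while $\Phi_i(C_r)=\Phi_i(B)\prec\Phi_i(\m S)$ (nonempty) when $r(i)=1$. The map $r\mapsto C_r$ is Borel, because a given element of the fixed structure $B$ lies in $C_r$ if and only if its convex type is isolated or the corresponding coordinate of $r$ equals $1$, a clopen condition on $r$; so the universe of $C_r$ varies continuously and the relations are read off the fixed $B$. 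By Lemma~\ref{LocallySimpleCharacterizationLemma}, $C_r\iso C_{r'}$ if and only if $t_{C_r}=t_{C_{r'}}$ if and only if $r=r'$ (the empty structure and a nonempty one have distinct theories). This gives $\iso_1\borelleq\iso_T$, and with the previous paragraph, $\iso_T\boreleq\iso_1$. For the back-and-forth count: the $2^{\aleph_0}$ models $C_r$ are countable and pairwise nonisomorphic, hence pairwise $\equiv_{\infty\omega}$-inequivalent, so $I_{\infty\omega}(T)\geq\beth_1$; conversely, every $A\models T$ admits a countable $A_0\prec A$ with $\Phi(A_0)\prec\Phi(A)$ for all $\Phi\in\IT(T)$, whence $A\equiv_{\infty\omega}A_0$ by Lemma~\ref{LocallySimpleCharacterizationLemma}, so $I_{\infty\omega}(T)\leq 2^{\aleph_0}=\beth_1$; therefore $I_{\infty\omega}(T)=\beth_1$.

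The only step demanding genuine care is the production of the skeleton $A^*$: one must omit an arbitrary countable family of nonisolated convex types simultaneously, and inside the ambient model from which $B$ is built. The convex-closure remark above is exactly what licenses this, reducing it to the ordinary omitting types theorem through the implication ``nonisolated in $\IT(T)$ implies nonprincipal''. Everything else is routine bookkeeping with Lemmas~\ref{LocallySimpleDriverLemma}, \ref{LocallySimpleCharacterizationLemma}, and \ref{LocallySimpleManyModelsLemma}, together with the standard fact that equality on any standard Borel space of size continuum is Borel bi-reducible with $(\b R,=)$.
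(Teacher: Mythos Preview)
Your proof is correct and follows essentially the same approach as the paper's: reduce upward to equality on a product of finite sets via the invariant $A\mapsto(\th(\Phi_i(A)))_i$, and for the reverse direction build models $C_r$ by excising the realizations of selected nonisolated convex types from a fixed countable $B\succ A^*$, invoking Lemma~\ref{LocallySimpleManyModelsLemma} to see $C_r\prec B$. Your explicit verification that a nonisolated convex type is nonprincipal as a partial type (via the convex hull $\hat\theta$ of a putative isolating formula) is a useful detail the paper simply asserts.

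One small technical wrinkle: in this paper ``sufficiently saturated'' means only $\aleph_0$-saturated and strongly $\aleph_0$-homogeneous, which does not by itself guarantee that an arbitrary countable model of $T$ embeds elementarily into $\m S$. So the sentence ``by the omitting types theorem, $\m S$ has a countable elementary submodel $A^*$ omitting every nonisolated convex type'' overstates what the standard omitting types theorem delivers. The fix is trivial and is exactly what the paper does: drop $\m S$ from this part of the construction, take any countable $A^*\models T$ omitting all nonisolated convex types (your nonprincipality argument licenses this), and let $B\succ A^*$ be a countable elementary extension realizing every $\Phi_i$. Then $A^*\prec B$ holds by construction, and the rest of your argument is unchanged.
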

	\begin{proof}
		We first show $\iso_T\borelleq \iso_1$ by showing a Borel reduction from $\Mod_\omega(T)$ to $(\omega^\omega,=)$; note this is Borel equivalent to $\iso_1$ (see \cite{KechrisDST}).  For each $\Phi$, fix an indexing of $\{T_\Phi^i:1\leq i\leq n_\Phi\}$.  Also fix an indexing $\{\Phi_n:n\in\omega\}$ of $\IT(T)$.  Then for any model $M\models T$, let $s_M\in\omega^\omega$ take $n\in\omega$ to the unique $i$ where $\Phi_n(A)\models T_\Phi^i$.  Certainly for any $M,N\models T$, $M\equiv_{\infty\omega} N$ if and only if $s_M=s_N$, so $\iso_T\borelleq \iso_1$.  Since this construction makes sense for any models of $T$, this also shows $I_{\infty\omega}(T)\leq \beth_1$.
		
		For the other direction, we show $\iso_1\borelleq \iso_T$ by giving a Borel reduction from $(2^\omega,=)$ to $\Mod_\omega(T)$.  Let $\{\Phi_n:n\in\omega\}$ be an enumeration of the nonisolated types in $\IT(T)$.  Let $A\models T$ be some model omitting every $\Phi_n$, and let $B\succ A$ be countable and realize every $\Phi_n$.  For $\eta\in 2^\omega$, let $C_\eta$ omit $\Phi_n\in\IT(T)$ if and only if $\eta(n)=0$. This is done by use of Lemma~\ref{LocallySimpleManyModelsLemma}, so that $C_\eta$ is just the elements of $B$ which are not in $\bigcup \{\Phi(B):\eta(n)=0\}$.  Certainly this can be made Borel and $C_\eta\iso C_\nu$ if and only if $\eta=\nu$.  Since all the $C_\eta$ are countable, this shows $\iso_1\borelleq \iso_T$.  Since these models is countable and pairwise nonisomorphic, they are also pairwise back-and-forth inequivalent.  So $I_{\infty\omega}(T)\geq \beth_1$, completing the proof.
	\end{proof}
	
	\begin{proposition}\label{LocallySimpleUncountableProp}
		If $T$ is locally simple and $\IT(T)$ is uncountable, then $\iso_T$ is Borel equivalent to $\iso_2$ and $I_{\infty\omega}=\beth_2$.
	\end{proposition}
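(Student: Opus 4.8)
The plan is to prove four estimates: $\iso_T\borelleq\iso_2$ and $I_{\infty\omega}(T)\leq\beth_2$ from one construction, and $\iso_2\borelleq\iso_T$, $I_{\infty\omega}(T)\geq\beth_2$ from a second.

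For the upper bounds I would use Lemma~\ref{LocallySimpleCharacterizationLemma}: the $\equiv_{\infty\omega}$-class of any $A\models T$ is determined by the function $g_A\colon\IT(T)\to(\text{theories})$ with $g_A(\Phi)=\th(\Phi(A))$, where we record $\th(\emptyset)$ when $A$ omits $\Phi$. By local simplicity and Lemma~\ref{LocallySimpleDriverLemma}, $g_A(\Phi)$ takes one of only $n_\Phi$ values, all from a fixed countable pool of $\aleph_0$-categorical theories; since an uncountable compact metric space has size $\beth_1$ we get $|\IT(T)|\leq\beth_1$, so there are at most $\aleph_0^{\beth_1}=\beth_2$ possible $g_A$, giving $I_{\infty\omega}(T)\leq\beth_2$. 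For the Borel reduction, to a countable $M\in\Mod_\omega(T)$ I would assign the countable set $\{(\Phi_n,\th(\Phi_n(M))):n\in\omega\}$, where $\Phi_n\in\IT(T)$ is the convex type of the point $n$; fixing a Borel injection of $\IT(T)\times\omega$ into $\b R$ turns this into a map $\Mod_\omega(T)\to\b R^\omega$, and by Lemma~\ref{LocallySimpleCharacterizationLemma} (and $\aleph_0$-categoricity of each realized $\Phi(M)$) two countable models of $T$ are isomorphic iff these sets coincide, i.e.\ iff the images are $\iso_2$-related. The point needing care is Borelness of $M\mapsto\th(\Phi(M))$ for fixed $\Phi$: one expresses ``$\Phi(M)\models\sigma$'' using finitely many convex formulas from $\Phi$ and a quantifier over $M$, via Lemma~\ref{RelativizationLemma}.

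For $\iso_2\borelleq\iso_T$ I would invoke Marker's theorem (quoted in the background): it suffices that $S_1(T)$ be uncountable. Every complete $1$-type restricts to a convex type, and every $\Phi\in\IT(T)$ is consistent, hence realized by an element (with some complete type) in a sufficiently saturated model; so the restriction $S_1(T)\to\IT(T)$ is onto and $|S_1(T)|\geq|\IT(T)|>\aleph_0$. Together with the previous paragraph, $\iso_T\boreleq\iso_2$.

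Finally, for $I_{\infty\omega}(T)\geq\beth_2$, fix a countable $A\models T$, elementarily embed it into a sufficiently saturated $\m S\models T$, and let $D\subseteq\IT(T)$ be the countable set of convex types realized in $A$; note $D$ contains every isolated type, as those are realized by every model of $T$ (if $\phi$ isolates $\Phi$ then $T\vdash\exists x\,\phi(x)$). For each $X\subseteq\IT(T)\setminus D$ put $M_X=\m S\setminus\bigcup\{\Phi(\m S):\Phi\notin D\cup X\}$; then $A\subseteq M_X\subseteq\m S$ with $A\prec\m S$, so $M_X\prec\m S$ by Lemma~\ref{LocallySimpleManyModelsLemma}, and $M_X$ realizes exactly the convex types in $D\cup X$. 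Distinct $X$ thus yield models realizing distinct sets of convex types, which are pairwise $\not\equiv_{\infty\omega}$ by Lemma~\ref{LocallySimpleCharacterizationLemma}, and there are $2^{|\IT(T)\setminus D|}=2^{\beth_1}=\beth_2$ of them. I expect the real work to lie in the upper-bound paragraph — verifying that the type-function coding is genuinely Borel and a genuine reduction, and handling the empty-theory convention uniformly — while the two lower bounds come essentially for free from Marker's theorem and Lemma~\ref{LocallySimpleManyModelsLemma}.
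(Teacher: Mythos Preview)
Your proof is correct and mostly parallels the paper's. The one substantive difference is the lower bound $\iso_2\borelleq\iso_T$: you invoke Marker's theorem as a black box (observing that the restriction $S_1(T)\to\IT(T)$ is surjective, so $S_1(T)$ is uncountable), whereas the paper constructs the reduction explicitly, restricting the Lemma~\ref{LocallySimpleManyModelsLemma} construction to countable $K$ and replacing each $\Phi(\m S)$ by a countable elementary substructure so that the resulting models land in $\Mod_\omega(T)$. Your route is shorter and uses machinery already quoted in the background; the paper's route is self-contained and reuses one construction for both $\iso_2\borelleq\iso_T$ and $I_{\infty\omega}(T)\geq\beth_2$. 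Otherwise the arguments coincide: the upper bounds via Lemma~\ref{LocallySimpleCharacterizationLemma} are the same idea (your encoding by pairs $(\Phi_n,\th(\Phi_n(M)))$ is a slight refinement of the paper's, which records only the theories $T^M_n$), and your $I_{\infty\omega}(T)\geq\beth_2$ argument via Lemma~\ref{LocallySimpleManyModelsLemma} matches the paper's exactly.
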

	\begin{proof}
		We first show that $\iso_T\borelleq \iso_2$ by showing a Borel reduction from $\Mod_\omega(T)$ to $((X)^\omega,E)$, where $X$ is the set of all possible $L$-theories and two functions are equivalent if and only if their images are equal as sets; $((X)^\omega,E)$ is Borel equivalent to $\iso_2$ because $X$ is a Polish space (see \cite{KechrisDST}).  Since $X$ is a standard Borel space, $(X,E)\boreleq \iso_2$.  So let $M\in\Mod_\omega(T)$, and for each $n\in\omega$, let $\Phi^M_n$ be the convex type of $n$ in $M$.  Then let $T^M_n$ be the theory of $\Phi^M_n(M)$, and define our function by $M\mapsto (T^M_n:n\in\omega)$.  By Lemma~\ref{LocallySimpleCharacterizationLemma}, countable models $M,N\models T$ have $M\iso N$ if and only if they realize the same convex types (necessarily a countable set), and for each realized type $\Phi$, $\Phi(M)\equiv \Phi(N)$.  This is equivalent to the \emph{sets} $\{T^M_n:n\in\omega\}$ and $\{T^N_n:n\in\omega\}$ being equal.
		
		The back-and-forth version of this argument is less delicate. Two models $M,N\models T$ (of any size) are back-and-forth equivalent if and only if, for all $\Phi\in \IT(T)$, $\Phi(M)\equiv \Phi(N)$.  Since $\IT(T)$ is uncountable, $|\IT(T)|=\beth_1$, so $I_{\infty\omega}(T)\leq \omega^{\beth_1}=\beth_2$.
		
		For the reverse, we again use Lemma~\ref{LocallySimpleManyModelsLemma}.  Fix a countable model $M\models T$ and some model $\m S\models T$ realizing every convex type.  Let $X$ be the set of convex  types \emph{omitted} by $M$; since $\IT(T)$ is uncountable and $M$ is countable, $X$ is an uncountable standard Borel space using the usual topology. For any set $K\subset \IT(T)$, let $M_K$ be $\m S\setminus \bigcup_{\Phi\in X\setminus K}\Phi(\m S)$, so that $M_K$ realizes only the types in $K$.  If $K_1\not=K_2$, $M_{K_1}$ and $M_{K_2}$ realize different types, so are pairwise inequivalent.  Thus $I_{\infty\omega}(T)\geq\beth_2$.
		
		For the countable version of this argument, we need to be slightly more careful.  We restrict ourselves to countable $K$, so that $M_K$ realizes only countably many types. We also need $\Phi(\m S)$ to be countable for each $\m S$, which can be guaranteed by simply replacing each $\Phi(\m S)$ with a countable elementary substructure. But then we have a Borel function from $(X^\omega,E)$ to $(\Mod_\omega(T),\iso)$, where we take $f:\omega\to X$ to $M_{\im(f)}$.  Certainly $M_{\im(f)}\iso M_{\im(g)}$ if and only if $\im(f)=\im(g)$, if and only if $fEg$.  So $\iso_2\borelleq \iso_T$, as desired.
	\end{proof}
	
	We summarize our findings in the following compilation theorem:
	
	\begin{theorem}\label{MainTheorem}
		Let $T$ be a CLO.  If $T$ is locally nonsimple, then 
		
		\begin{enumerate}
			\item $T$ is $\lambda$-Borel complete for all $\lambda$.
		\end{enumerate}
		
		Otherwise $T$ is locally simple and exactly one of the following happens:
		
		\begin{enumerate}
			\setcounter{enumi}{1}
			\item $T$ is $\aleph_0$-categorical.
			\item There is some $n$ with $3\leq n<\omega$ where $\iso_T\boreleq (n,=)$ and $I_{\infty\omega}(T)=n$.
			\item $\iso_T\boreleq \iso_1$ and $I_{\infty\omega}(T)=\beth_1$.
			\item $\iso_T\boreleq \iso_2$ and $I_{\infty\omega}(T)=\beth_2$.
		\end{enumerate}
		
		All five cases are possible, including every value of $n$ with $3\leq n<\omega$.
	\end{theorem}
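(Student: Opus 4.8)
This is a compilation theorem, so the plan is to assemble the pieces already in hand and then exhibit a witness for each of the five cases.

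If $T$ is locally nonsimple, then the assertion that $T$ is $\lambda$-Borel complete for all $\lambda$ is exactly the (unnamed) Theorem established immediately after the definition of ``locally simple,'' so assume from now on that $T$ is locally simple. The crux is that the hypotheses of Propositions~\ref{LocallySimpleOneProp}, \ref{LocallySimpleFiniteProp}, \ref{LocallySimpleCountableProp}, and \ref{LocallySimpleUncountableProp} are mutually exclusive and jointly exhaustive, once one recalls that $\IT(T)$ is a compact, metrizable, totally disconnected space. By the Cantor--Bendixson theorem such a space is either countable or of cardinality $\beth_1$; moreover it has only countably many isolated points, so an uncountable $\IT(T)$ automatically has uncountably many nonisolated types, while an infinite compact space is never discrete, so an infinite $\IT(T)$ has at least one nonisolated type. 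Hence exactly one of the following holds: $\IT(T)$ is finite; $\IT(T)$ is infinite with only finitely many nonisolated types; $\IT(T)$ is countably infinite with infinitely many nonisolated types; $\IT(T)$ is uncountable. Feeding these four alternatives into the four propositions respectively yields conclusions (2)--(5). That these are pairwise incompatible -- and hence that ``exactly one'' of all five cases occurs, the locally nonsimple case included -- follows because the asserted values of $I_{\infty\omega}(T)$, namely $1$, a natural number $n$ with $3\le n<\omega$, $\beth_1$, $\beth_2$, and (for the locally nonsimple case) $\infty$, are pairwise distinct.

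It remains to realize all five cases, and for case (3) every value of $n$. For (2) take $\th(\b Q,<)$. For (1) take $\th(\b Z,<)$: it is self-additive and not $\aleph_0$-categorical, hence $\lambda$-Borel complete for all $\lambda$ by Theorem~\ref{SADichotomyTheorem} (equivalently, $\IT$ of $\th(\b Z,<)$ is a single point $\Phi$ with $T_\Phi=\th(\b Z,<)$, which is not $\aleph_0$-categorical). For (3) with $n=3$, work in $\{<\}\cup\{P_m:m<\omega\}$ and take the theory of $(\b Q,<,P_m)_m$ in which each $P_m$ holds at exactly one point $c_m$ and $c_0<c_1<\cdots$: this is an Ehrenfeucht-style CLO with exactly three countable models, its convex types are the isolated ones $\Phi_m=\{P_m(x)\}$ together with the positional types strictly between consecutive $c_m$ (or below $c_0$) and the single nonisolated type $\Phi_\infty$ of an element above every $c_m$, and every $T_\Phi$ is either a single colored point or a pure dense order without endpoints, hence $\aleph_0$-categorical, so $T$ is locally simple with $n_{\Phi_\infty}=3$. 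For a general $n\ge 3$ one arranges, by decorating the $\Phi_\infty$-region (or adjoining further bounded gadgets controlling a fixed finite number of extra marked points), that the model count is exactly $n$; these verifications are routine. For (4), a sum of $\omega$ many independent copies of the $n=3$ gadget (with disjoint predicates) produces a CLO whose $\IT(T)$ is countably infinite but has infinitely many nonisolated types. For (5), work in $\{<\}\cup\{P_s:s\in 2^{<\omega}\}$ with the $P_s$ forming a binary tree of convex sets refining as $s$ grows; then in a sufficiently saturated model every branch of $2^{<\omega}$ gives a distinct convex type, so $\IT(T)$ is uncountable, while each such $T_\Phi$ is again a pure dense order, so $T$ is locally simple.

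The only genuine work is the example-building, and within that, producing for every $n\ge 3$ a colored order with exactly $n$ countable models that is nonetheless locally simple; the rest of the argument -- the Cantor--Bendixson trichotomy plus invoking the four propositions -- is bookkeeping. A secondary point one must check in each example is that every $T_\Phi$ is genuinely $\aleph_0$-categorical (not merely self-additive), since that is precisely what places the theory in the locally simple regime.
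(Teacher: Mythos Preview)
Your compilation argument is correct and matches the paper's: the paper presents the theorem as a summary of the preceding results without further argument, invoking the unnamed theorem for the locally nonsimple case and Propositions~\ref{LocallySimpleOneProp}--\ref{LocallySimpleUncountableProp} for the four locally simple subcases. Your Cantor--Bendixson reasoning for joint exhaustiveness of those four hypotheses, and your observation that the five $I_{\infty\omega}$ values are pairwise distinct, are welcome additions the paper leaves implicit.

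The divergence is in the examples, and here there is a gap precisely where you yourself locate ``the only genuine work.'' For case~(3) with general $n$, the paper gives a uniform explicit construction: in $\{<\}\cup\{c_i:i\in\omega\}\cup\{P_i:0\le i\le n-3\}$, take the dense order without endpoints in which the $P_i$ are disjoint, dense, codense, and exhaustive, with $c_i<c_{i+1}$ and $P_0(c_i)$ for all $i$; the unique nonisolated convex type $\Phi_\infty$ (``above every $c_i$'') then has $n_{\Phi_\infty}=n$, corresponding to $\Phi_\infty(A)$ being empty, having no left endpoint, or having a left endpoint colored $P_j$ for one of the $n-2$ colors. Your $n=3$ example is correct and is essentially this construction with a single color, but ``by decorating the $\Phi_\infty$-region\ldots these verifications are routine'' is not a construction, and getting \emph{exactly} $n$ rather than merely finitely many models is the point. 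For cases~(4) and~(5) the paper's examples are $(\b Q,<,c_s)_{s\in S}$ with $S=\{k+\tfrac{1}{n}:k\in\b Z,\ 2\le n<\omega\}$ and $(\b Q,<,q)_{q\in\b Q}$ respectively; these are considerably more direct to verify locally simple than your $\omega$-sum and binary-tree sketches (for instance, in your $\omega$-sum one must also analyze the additional limit convex type ``above every gadget'' and check its $T_\Phi$ is $\aleph_0$-categorical). Your proposed examples are plausible, but they need exactly the checking you defer.
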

	
	The possibility of each case is evidenced by the following examples.  We leave the verifications to the reader.  Note that here and subsequently, when we informally ``add a constant'' for an element $c$, we really mean we add a unary predicate $P_c$, which is true on $c$ and false elsewhere.
	
	\begin{enumerate}
		\item $(\b Z,<)$ is $\lambda$-Borel complete for all $\lambda$.
		\item $(\b Q,<)$ is $\aleph_0$-categorical.
		\item Let $3\leq n<\omega$, let $L=\{<\}\cup\{c_i:i\in\omega\}\cup\{P_i:0\leq i\leq n-3\}$.  Let $T_n$ state that $<$ is a dense linear without endpoints, that the $P_i$ are disjoint, dense, codense, and exhaustive, that $c_i<c_{i+1}$ for all $i$, and that $P_0(c_i)$ always holds.
		
		Then $T_n$ is complete and has exactly $n$ countable models.
		\item Let $S\subset\b Q$ be the set of all rationals of the form $k+\frac{1}{n}$ where $k\in\b Z$ and $2\leq n<\omega$. Then $(\b Q,<,c_s)_{s\in S}$ is Borel equivalent to $\iso_1$.
		\item $(\b Q,<,q)_{q\in \b Q}$ is Borel equivalent to $\iso_2$.
	\end{enumerate}
	
	We end with a nice corollary of our findings, a special case when the language is finite and the dichotomy is very sharp.  This result generalizes a result of Schirmann in \cite{SchirmannLinearOrders}, where a countable version of the same theorem was proven for linear orders without any colors.
	
	\begin{corollary}
		If $T$ is a CLO in a finite language $L$, then either $T$ is $\aleph_0$-categorical or is $\lambda$-Borel complete for all $\lambda$.
	\end{corollary}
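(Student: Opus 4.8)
The plan is to deduce this from Theorem~\ref{MainTheorem} by showing that the three ``middle'' outcomes (3)--(5) cannot occur in a finite language. If $T$ is locally nonsimple we are already in case (1), so assume $T$ is locally simple. By Proposition~\ref{LocallySimpleOneProp} a locally simple CLO with $\IT(T)$ finite is $\aleph_0$-categorical, while Propositions~\ref{LocallySimpleFiniteProp}, \ref{LocallySimpleCountableProp}, and \ref{LocallySimpleUncountableProp} show that a locally simple CLO with $\IT(T)$ infinite lands in one of cases (3)--(5). So the whole corollary reduces to the claim: \emph{if $L$ is finite and $T$ is locally simple, then $\IT(T)$ is finite}.

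First I would assemble the consequences of finiteness of $L$. For each $\Phi\in\IT(T)$, local simplicity makes $T_\Phi$ $\aleph_0$-categorical; since $L$ is finite, $S_1(T_\Phi)$ is finite, all its types are isolated, and (by Theorem~\ref{FiniteAxiomatizationTheorem}) $T_\Phi$ is finitely axiomatizable. Applying Rubin's relativization (Lemma~\ref{RelativizationLemma}) to the convex set $\Phi(\m S)$ inside a sufficiently saturated $\m S\models T$, the complete $L$-types of $T$ concentrating on $\Phi(\m S)$ are in natural bijection with $S_1(T_\Phi)$, and when $\Phi$ is isolated in $\IT(T)$ this set is clopen in $S_1(T)$; it follows that every nonisolated $1$-type of $T$ has a nonisolated convex type, for otherwise some $T_\Phi$ would inherit a nonisolated type and fail to be $\aleph_0$-categorical. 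Finally, by Proposition~\ref{Aleph0CatGeneralFactsProp}(2) there are, for each $n$, only finitely many $\aleph_0$-categorical $L$-theories of rank $n$, and by parts (1) and (3) the convex subsets of any $\Phi(\m S)$ are controlled in number and in rank by $r(T_\Phi)$.

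Now suppose toward a contradiction that $\IT(T)$ is infinite. Being compact and metrizable, it has a nonisolated convex type, and I would fix a monotone sequence $\Phi_k\to\Phi_\infty$ in $\IT(T)$ with nothing strictly between $\sup_k\Phi_k$ and $\Phi_\infty$. The hard part — and where I expect the real obstacle — is converting this clustering into a failure of local simplicity. The intended mechanism: by the finiteness bounds above, infinitely many of the $\Phi_k$ share one common local theory $T^{*}$, finitely axiomatized by a sentence $\sigma^{*}$; in $\m S$ these yield an $\omega$-indexed family of blocks, each a model of $T^{*}$, lying immediately below and accumulating at $\Phi_\infty(\m S)$. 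Relativizing $\sigma^{*}$ by Lemma~\ref{RelativizationLemma} makes ``being such a block'' first-order, and an $\omega$-chain of uniformly definable blocks converging up to $\Phi_\infty(\m S)$ should let one produce, inside $\Phi_\infty(\m S)$ or a convex type adjacent to it in $\IT(T)$, infinitely many pairwise inequivalent $\emptyset$-definable convex subsets, contradicting the $\aleph_0$-categoricity of that local theory. This is exactly the point at which finiteness of $L$ is indispensable: with infinitely many unary predicates available, each new block can carry a fresh color, so no single local theory is forced to repeat — which is precisely how the examples realizing cases (3)--(5) after Theorem~\ref{MainTheorem} are constructed. Once the contradiction is obtained, $\IT(T)$ is finite, so $T$ is $\aleph_0$-categorical by Proposition~\ref{LocallySimpleOneProp}, completing the argument.
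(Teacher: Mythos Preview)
Your overall strategy matches the paper's: reduce to showing that a locally simple $T$ in a finite language must have $\IT(T)$ finite, then derive a contradiction from a nonisolated $\Phi\in\IT(T)$ by exhibiting non-$\aleph_0$-categoricity somewhere it cannot occur. The endpoint is right; the mechanism you sketch for reaching it is where the argument breaks.

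There are two concrete gaps. First, your pigeonhole step ``infinitely many of the $\Phi_k$ share one common local theory $T^{*}$'' is not justified: Proposition~\ref{Aleph0CatGeneralFactsProp}(2) bounds the number of $\aleph_0$-categorical $L$-theories of each fixed rank, but nothing bounds the ranks $r(T_{\Phi_k})$ uniformly, and even in a finite language there are infinitely many $\aleph_0$-categorical CLOs. Second, your proposed contradiction---producing infinitely many $\emptyset$-definable convex subsets inside $\Phi_\infty(\m S)$ or an adjacent $\Psi(\m S)$---cannot work as stated: every $\Phi(\m S)$ is self-additive (or a singleton), so by definition it has \emph{no} nontrivial $\emptyset$-definable convex subsets at all. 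The blocks $\Phi_k(\m S)$ sit outside $\Phi_\infty(\m S)$, and you have not explained how their accumulation forces any internal complexity in $\Phi_\infty(\m S)$.

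The paper bridges this gap with a compactness argument rather than a direct construction. Using finiteness of $L$, for each $n$ there is a single $L$-formula $\sigma_n(x,y)$ expressing ``$x<y$ and $[x,y]$ is not $\aleph_0$-categorical of rank $\le n$'' (this is where finite axiomatizability and the finite count per rank are actually used). One then considers the partial type $\Gamma(x,y)=\Phi(x)\cup\Phi(y)\cup\{\sigma_n(x,y):n\in\omega\}$. Any finite fragment is realized by taking $b$ in $\Phi$ and $a$ in some $\Phi_m$: the interval $[a,b]$ then contains infinitely many distinct $\Phi_n$, which relativize via Lemma~\ref{RelativizationLemma} to give infinitely many inequivalent formulas internal to $[a,b]$, so $[a,b]$ is not $\aleph_0$-categorical of any rank. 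By compactness and saturation, $\Gamma$ is realized by a pair $a,b$ \emph{both} in $\Phi(\m S)$, yielding a non-$\aleph_0$-categorical convex subset of the $\aleph_0$-categorical $\Phi(\m S)$, contradicting Proposition~\ref{Aleph0CatGeneralFactsProp}(1). The key move you are missing is this transfer: complexity visible only across the $\Phi_k$ is pushed, via compactness, into an interval lying entirely within $\Phi(\m S)$.
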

	\begin{proof}
		If $T$ is locally nonsimple, or if $\IT(T)$ is finite, the corollary follows from Theorem~\ref{MainTheorem}.  So suppose, by way of contradiction, that there is a nonisolated $\Phi\in \IT(T)$. Let $(\Phi_n:n\in\omega)$ be a sequence from $\IT(T)$ limiting to $\Phi$.  Without loss of generality, we assume $\Phi_n<\Phi_{n+1}$ for all $n$.  Since $L$ is finite, every $\aleph_0$-categorical CLO in $L$ is finitely axiomatizable by Theorem~\ref{FiniteAxiomatizationTheorem}, and there are only finitely many such theories of any particular rank by Theorem~\ref{Aleph0CatCharacterizationThm}.  Thus, for every $n$, there is an $L$-formula $\sigma_n(x,y)$ stating ``$x<y$ and $[x,y]$ is not an $\aleph_0$-categorical CLO of rank at most $n$.''  For a moment, suppose the partial type $\Gamma(x,y)=\bigcup_n\sigma_n(x,y)\cup \Phi(x)\cup \Phi(y)$ is consistent.  Then any sufficiently saturated $\m S\models T$ realizes it at some pair $[a,b]$.  But then $[a,b]$ is not $\aleph_0$-categorical, despite being a convex subset of the $\aleph_0$-categorical structure $\Phi(\m S)$.  This will give us a contradiction to Proposition~\ref{Aleph0CatGeneralFactsProp}, assuming we can show $\Gamma$ is consistent.
		
		We show this by compactness.  So let $\Gamma_0\subset\Gamma$ be finite.  Then $\Gamma_0(a,b)$ says at most that $a<b$, that $[a,b]$ is not $\aleph_0$-categorical of rank at most $k$ for some $k$, and that there is a formula $\phi(x)$, contained in cofinitely many of the $\Phi_n$, such that both $a$ and $b$ satisfy $\phi$.  So pass to some sufficiently saturated $\m S\models T$, and let $b\in\m S$ realize $\Phi$.  Let $m$ be large enough that realizing $\Phi_m$ guarantees realizing $\phi$, and let $a\in\m S$ realize $a$.  For every $n<\omega$, there is a convex formula $\phi_n$ where $\Phi_i\proves\phi_n$ if and only if $i=n$.  By Lemma~\ref{RelativizationLemma}, there is a formula $\phi_n^{\#}(x)$ where for all $c\in [a,b]$, $[a,b]\models \phi_n^{\#}(c)$ if and only if $\m S\models\phi_n(c)$.  But if $m<n<n'<\omega$, then $\phi_n^{\#}$ and $\phi_{n'}^{\#}$ are disjoint definable subsets of $[a,b]$, meaning $[a,b]$ admits infinitely many inequivalent formulas, so is not $\aleph_0$-categorical.  Thus $(a,b)$ realize $\Gamma_0$, completing the proof.
	\end{proof}

	\bibliography{Citations}   

\begin{thebibliography}{10}

\bibitem{BarwiseScottSentences}
Jon Barwise.
\newblock Back and forth through infinitary logic.
\newblock In Michael~D. Morley and Jon Barwise, editors, {\em Studies in Model
  Theory}, volume~8 of {\em Studies in Mathematics}, pages 5--34. Mathematical
  Association of America, Buffalo, 1973.

\bibitem{FriedmanStanleyBC}
Harvey Friedman and Lee Stanley.
\newblock A {Borel} reducibility theory for classes of countable structures.
\newblock {\em The Journal of Symbolic Logic}, 54:894--914, 1989.

\bibitem{GaoIDST}
S.~Gao.
\newblock {\em Invariant Descriptive Set Theory}.
\newblock Chapman \& Hall/CRC Pure and Applied Mathematics. CRC Press, 2008.

\bibitem{HjorthKechrisLouveau}
Greg Hjorth, Alexander Kechris, and Alain Louveau.
\newblock {Borel} equivalence relations induced by actions of the symmetric
  group.
\newblock {\em Annals of Pure and Applied Logic}, 92:63--112, 1998.

\bibitem{KechrisDST}
Alexander Kechris.
\newblock {\em Classical Descriptive Set Theory}.
\newblock Graduate Texts in Mathematics. Springer New York, 1 edition, 1995.

\bibitem{KeislerMT}
Jerome Keisler.
\newblock {\em Model Theory for Infinitary Logic}.
\newblock North-Holland, 1971.

\bibitem{LaskowskiShelahAleph0Stable}
Michael~C. Laskowski and S.~Shelah.
\newblock {B}orel completeness of some $\aleph_0$-stable theories.
\newblock {\em Fundamenta Mathematicae}, 2013.

\bibitem{MarkerMT}
David Marker.
\newblock {\em Model theory : an introduction}, volume 217 of {\em Graduate
  texts in mathematics}.
\newblock Springer, New York, 2002.

\bibitem{MarkerNonSmallTheories}
David Marker.
\newblock The {Borel} complexity of isomorphism for theories with many types.
\newblock {\em Notre Dame Journal of Formal Logic}, 48(1):93--97, 2007.

\bibitem{MwesigyeTrussCategoricalColoredOrders}
Feresiano Mwesigye and John~K. Truss.
\newblock Countably categorical coloured linear orders.
\newblock {\em Mathematical Logic Quarterly}, 56(2):159--163, 2010.

\bibitem{RastSahotaBCOMinimal}
Richard Rast and Davender~Singh Sahota.
\newblock The {Borel} complexity of isomorphism for {O}-{M}inimal theories.
\newblock {\em Journal of Symbolic Logic}, to appear.

\bibitem{RosensteinCategoricalOrders}
Joseph~G. Rosenstein.
\newblock $\aleph_0$-categoricity of linear orderings.
\newblock {\em Fundamenta Mathematicae}, 64(1):1--5, 1969.

\bibitem{RosensteinLinearOrderings}
Joseph~G. Rosenstein.
\newblock {\em Linear Orderings}.
\newblock Academic Press, 1982.

\bibitem{RubinTheoriesOfLinearOrder}
Matatyahu Rubin.
\newblock Theories of linear order.
\newblock {\em Israel Journal of Mathematics}, 17(4):392--443, 1974.

\bibitem{SchirmannLinearOrders}
M.~Schirmann.
\newblock Theories des ordres totaux et relations d’equivalence.
\newblock Master's thesis, Universite de Paris VII, 1997.

\bibitem{UlrichLaskowskiRast}
Douglas Ulrich, Richard Rast, and Michael~C. Laskowksi.
\newblock A new notion of cardinality for countable first-order theories.
\newblock {\em Transactions of the American Mathematical Society}, submitted
  2015.

\bibitem{ModelsAndGames}
J.~V{\"a}{\"a}n{\"a}nen.
\newblock {\em Models and Games}.
\newblock Cambridge Studies in Advanced Mathematics. Cambridge University
  Press, 2011.

\end{thebibliography}
	
\end{document}